\documentclass[10pt,reqno]{amsart}

\usepackage{amssymb,amsrefs,amscd,amsmath}
\usepackage{a4wide,graphicx,nicefrac}
\usepackage{todonotes}
\usepackage[shortlabels]{enumitem}
\usepackage{mathrsfs}
\usepackage{hyperref}
\usepackage{ourbib}
\usepackage[all]{xy}
\usepackage{callouts}

\synctex=1
\setlist{leftmargin=8mm}


\hypersetup{
    colorlinks,
    linkcolor={red!50!black},
    citecolor={blue!50!black},
    urlcolor={blue!80!black}
}


\newcommand{\RR}{\mathscr{R}}
\newcommand{\K}{\mathscr{K}}
\newcommand{\U}{\mathscr{U}}
\newcommand{\X}{\mathscr{X}}
\newcommand{\II}{\mathrm{II}} 
\newcommand{\tr}{\mathrm{tr}} 
\newcommand{\R}{\mathbb{R}}
\newcommand{\N}{\mathbb{N}}
\newcommand{\dM}{{\partial M}}
\newcommand{\D}{\mathsf{D}}
\newcommand{\eps}{\varepsilon}
\renewcommand{\phi}{\varphi}
\newcommand{\Ahat}{\mathsf{\hat A}}
\newcommand{\ch}{\mathsf{ch}}
\renewcommand{\c}{\mathsf{c}}
\newcommand{\scal}{\mathrm{scal}}
\newcommand{\ric}{\mathrm{ric}}
\renewcommand{\div}{\mathrm{div}}
\newcommand{\<}{\left\langle}
\renewcommand{\>}{\right\rangle}
\newcommand{\dV}{\,\mathsf{dV}}
\newcommand{\vol}{\mathsf{vol}}
\newcommand{\dA}{\mathsf{dA}}
\newcommand{\nor}{\mathsf{nor}}
\newcommand{\myicon}{$\,\,\,\triangleright$}

\DeclareMathOperator{\ind}{\mathrm{ind}}
\DeclareMathOperator{\id}{\mathrm{id}}
\DeclareMathOperator{\Diff}{\mathrm{Diff}} 


\newtheorem{theorem}{Theorem}[section]
\newtheorem{lemma}[theorem]{Lemma}
\newtheorem{proposition}[theorem]{Proposition}
\newtheorem{corollary}[theorem]{Corollary}

\theoremstyle{definition}
\newtheorem{remark}[theorem]{Remark}
\newtheorem{definition}[theorem]{Definition} 
\newtheorem{example}[theorem]{Example}

\begin{document}

\title{Boundary conditions for scalar curvature} 
\dedicatory{Dedicated to Misha Gromov on the occasion of his 77$^\mathrm{th}$ birthday}
\author{Christian B\"ar}
\address{Universit\"at Potsdam, Institut f\"ur Mathematik, 14476 Potsdam, Germany}
\email{\href{mailto:cbaer@uni-potsdam.de}{cbaer@uni-potsdam.de}}
\urladdr{\url{https://www.math.uni-potsdam.de/baer}}
\author{Bernhard Hanke}
\address{Universit\"at Augsburg, Institut f\"ur Mathematik, 86135 Augsburg, Germany}
\email{\href{mailto:hanke@math.uni-augsburg.de}{hanke@math.uni-augsburg.de}}
\urladdr{\url{https://www.math.uni-augsburg.de/diff/hanke}}

\begin{abstract} 
Based on  the Atiyah-Patodi-Singer index formula, we construct an obstruction to positive scalar curvature metrics with mean convex boundaries on spin manifolds of infinite $K$-area. 
We also characterize the extremal case.

Next we show a general deformation principle for boundary conditions of metrics with lower scalar curvature bounds.
This implies that  the relaxation of boundary conditions often induces weak homotopy equivalences of spaces of such metrics.
This can be used to refine the  smoothing of codimension-one singularites \`a la Miao and  the deformation of boundary conditions \`a la Brendle-Marques-Neves, among others. 

Finally, we construct compact manifolds for which the spaces of positive scalar curvature metrics with mean convex boundaries have nontrivial higher homotopy groups. 

Video abstract: \url{https://vimeo.com/530490182}
\end{abstract}

\keywords{Manifolds with boundary, lower scalar curvature bounds, lower mean curvature bounds, Atiyah-Patodi-Singer index formula, infinite $K$-area, area-enlargeability, deformations of Riemannian metrics, Min-Oo conjecture, spaces of positive scalar curvature metrics with conditions on the second fundamental form of the boundary}

\subjclass[2010]{53C21, 53C23; Secondary: 53C24, 53C27, 58D17, 58J20}

\maketitle
\tableofcontents

\section{Introduction} 
It is well known that  not every closed manifold $M$ carries Riemannian metrics of positive scalar curvature and, if it does, that the space of all such metrics on $M$ carries a rich topology in general. 
On the other hand, if $M$ is compact, connected, of dimension $2$ at least and has \emph{nonempty boundary}, then, by Gromov's $h$-principle, the space of positive scalar curvature metrics on $M$ is nonempty and contractible. 
Hence, in order to encounter interesting phenomena similar to the closed case, one needs to add conditions on the metric along the boundary. 
The paper at hand shows that this works for numerous boundary conditions of geometric importance. 

For some time the focus was laid  almost entirely on positive scalar curvature metrics which are of product type near the boundary. 
This is due to the fact that the Atiyah-Patodi-Singer index theorem for manifolds with boundary was formulated in this setting in \cite{APS1} for convenience.
Moreover, it can be technically useful to replace the manifold with boundary by one without boundary by attaching a cylinder.

Having product structure near the boundary is, however, not the most obvious assumption from a geometric perspective. 
It implies, for instance, that the metric induced on the boundary is also of positive scalar curvature, thus ruling out many topologies.
Consequently, the consideration of less restrictive boundary conditions, such as mean convex or totally geodesic boundaries, has recently been promoted at various occasions, notably in the work of Gromov  \cites{GL1980, GromovDiracPlateau, Gromov2018, Gromov2019, Gromov2020, GromovMean}.
One motivation for this development lies in the endeavour to find a comparison geometric characterization of lower scalar curvature bounds, see Li \cite{Li} for an example regarding nonnegative scalar curvature on   $3$-dimensional polyhedra. 

One of our main  results, Theorem~\ref{master}, describes  a general deformation scheme  for the strengthening of boundary conditions while preserving lower scalar curvature bounds on smooth manifolds with compact boundaries.
As a rule of thumb such deformations exist whenever they are mean curvature nonincreasing with respect to the interior normal along the boundary, much in the spirit of Gromov's  ``Bending Lemma'', see   \cite{Gromov2018}*{p.~705}. 

As a notable application, Theorem~\ref{master} implies that any positive scalar curvature metric with mean convex boundary can be deformed through such metrics to a positive scalar curvature metric with totally geodesic boundary. 
In contrast to previous results such as Carlotto-Li \cite{CarlottoLi}*{Prop.~1.4} our deformations exist in compact families such that metrics whose boundaries are already totally geodesic will keep this property during the deformation. 
The last feature is crucial for the investigation of spaces of metrics in later parts of our work. 

The  deformations in Theorem~\ref{master}  are constructed as follows:  
In a first step, we create an arbitrarily large scalar curvature contribution in a small neighborhood of the boundary by adjusting the second derivative  of the given metric in the normal direction along the boundary while leaving its  $1$-jet along the boundary constant and decreasing the scalar curvature over the whole manifold by an arbitrarily small amount, see Proposition~\ref{step1}. 
Here we use  local flexibility lemma  of B\"ar-Hanke \cite{BH}*{Thm.~1.2 and Addendum~3.4}.
In a second step, we deform the $1$-jet along the boundary of the resulting metric with the help of an explicit cut-off function which is supported in an arbitrarily small neighborhood of the boundary, see Proposition~\ref{schwer}. 
The crucial point is that the decrease of scalar curvature in the second deformation may be bounded independently of the first deformation.
By an appropriate choice of the first deformation one can hence assume that the concatenation of the two deformations produces a scalar curvature decrease which is only due to an application of the local flexibility lemma. 
One may speculate  that a similar strategy can also be used to refine the recent approximation results in Chow \cite{Chow2020}. 

Our paper is organized as follows. 
In Section \ref{nonexistence} we generalize the well-known index theoretic obstruction for positive scalar curvature metrics on closed spin manifolds, which is based on the  Lichnerowicz formula and the Atiyah-Singer index theorem, to compact manifolds with mean convex boundaries.
We use the Atiyah-Patodi-Singer index formula, see Theorem~\ref{thm:nonexist}. 
Our discussion also includes a characterization of the extremal case. 
We remark that the APS-index formula has also been used in recent work of Lott \cite{Lott2020} for generalizing the Llarull and Goette-Semmelmann rigidity theorems, see   \cite{Llarull} and \cite{GS2002}, to manifolds with boundaries. 

In Section~\ref{spaces} we prove our main deformation result, Theorem~\ref{master}, along the lines sketched above.
 
Our results are applied in Section~\ref{applications} to spaces of metrics with lower scalar curvature bounds under various boundary conditions. 
Theorems~\ref{main}, \ref{main_suppl}, \ref{mainwithboundary} and \ref{mainwithboundaryaddendum} describe a number of instances of geometric relevance when inclusions of such spaces are weak homotopy equivalences. 
Furthermore, we study spaces of metrics with lower scalar curvature bounds and mean-convex singularities along hypersurfaces and refine the well-known approximation results of Miao \cite{Miao}, see Theorem~\ref{desingularize}.
Subsection~\ref{minoo} revisits the construction of counterexamples to the Min-Oo conjecture by Brendle-Marques-Neves \cite{BMN} in the light of our deformation results. 

In Subsection~\ref{homgr} we construct, for each $m \geq 0$, examples of manifolds with nonempty boundaries for which the spaces of positive scalar curvature metrics and mean convex boundaries have nontrivial homotopy in degree $m$. 
These are the first examples of this kind.
Using the preceding results of our paper this implies analogous properties for spaces of positive scalar curvature metrics with other boundary conditions. 

\medskip

\textit{Acknowledgments:} 
We are grateful to Alessandro Carlotto and Jan Metzger for useful conversations. 
We thank the Deutsche Forschungsgemeinschaft for the financial support by the SPP 2026 ``Geometry at Infinity'' and the Mathematisches Forschungsinstitut Oberwolfach where this work was initiated under ideal working conditions.

\section{Nonexistence of metrics with positive scalar curvature and mean convex boundary} \label{nonexistence} 

Througout this section $M$ will be a compact Riemannian manifold with boundary $\dM$ and of dimension $n\ge2$. 

\subsection{Harmonic spinors on manifolds with boundary} \label{harm_spinor} 

Let $M$ carry a spin structure.
We denote the complex spinor bundle by $\Sigma M$ and fix a Hermitian vector bundle $E\to M$ with a compatible connection.
Then the twisted Dirac operator $D_E$ is a first-order elliptic differential operator which acts on sections of $\Sigma M\otimes E$.

If the dimension $n$ of $M$ is even, the spinor bundle splits into spinors of positive and negative chirality, $\Sigma M = \Sigma^+M\oplus \Sigma^-M$.
The Dirac operator then interchanges chirality, i.e., with respect to the splitting $\Sigma M\otimes E = (\Sigma^+M\otimes E)\oplus (\Sigma^-M\otimes E)$ it has the block form
$$
D_E = 
\begin{pmatrix}
0 & D_E^- \\
D_E^+ & 0
\end{pmatrix}
.
$$
Both $\Sigma^+M|_\dM$ and $\Sigma^-M|_\dM$ can be naturally identified with $\Sigma\dM$.
We denote the twisted Dirac operator on $\dM$ by $D_E^\dM$.
Since $\dM$ is a closed manifold, the operator $D_E^\dM$ is essentially selfadjoint. 

For any Borel subset $I\subset \R$ denote by $\chi_I\colon \R\to \{0,1\}$ the characteristic function of $I$.
Functional calculus for selfadjoint operators on the Hilbert space $L^2(\dM,\Sigma\dM\otimes E)$ provides us with the projections $\chi_I(D_E^\dM)$.

On the Sobolev space $W^1(M,\Sigma M\otimes E)$ the restriction map to the boundary is well defined and yields bounded linear maps $W^1(M,\Sigma^\pm M\otimes E)\to L^2(\dM,\Sigma\dM\otimes E)$.
We say that $\phi\in W^1(M,\Sigma^\pm M\otimes E)$ satisfies the \emph{APS-boundary conditions} if $\chi_{[0,\infty)}(D_E^\dM)(\phi|_\dM)=0$.
Similarly, we say that $\phi\in W^1(M,\Sigma M\otimes E)$ satisfies the \emph{weak APS-boundary conditions} if $\chi_{(0,\infty)}(D_E^\dM)(\phi|_\dM)=0$.
Obviously, if $\ker(D_E^\dM)=0$ then the APS and the weak APS-boundary conditions coincide.

Consider the operators 
\begin{gather}
D_E^{+,\mathsf{APS}} \colon  \{\phi\in W^1(M,\Sigma^+ M\otimes E): \, \phi \mbox{ satisfies APS-boundary conditions}\} \to L^2(M, \Sigma^-M)
\label{eq:defDAPS} \\
D_E^{-,\mathsf{wAPS}} \colon  \{\phi\in W^1(M,\Sigma^- M\otimes E): \, \phi \mbox{ satisfies weak APS-boundary conditions}\} \to L^2(M, \Sigma^+M)
\label{eq:defDsAPS} 
\end{gather}
Both $D_E^{+,\mathsf{APS}}$ and $D_E^{-,\mathsf{wAPS}}$ are Fredholm operators and the index formula of Atiyah, Patodi, and Singer \cite{APS1}*{Thm.~4.2} says that
\begin{align}
\ind(D_E^{+,\mathsf{APS}}) 
& = 
-\ind(D_E^{-,\mathsf{wAPS}}) = \dim(\ker(D_E^{+,\mathsf{APS}})) - \dim(\ker(D_E^{-,\mathsf{wAPS}})) \notag\\
&= 
\int_M \Ahat(M) \wedge \ch(E) + \int_\dM T(\Ahat(M) \wedge \ch(E)) - \frac{\dim(\ker(D_E^\dM)) + \eta(D_E^\dM)}{2}.
\label{eq:APS}
\end{align}
Here $\Ahat(M)$ is the $\Ahat$-form built out of the curvature of the Levi-Civita connection on $M$, $\ch(E)$ is the Chern character form constructed from the curvature $R^E$ of $E$, $T(\Ahat(M) \wedge \ch(E))$ is the corresponding transgression form and $\eta(D_E^\dM)$ denotes the $\eta$-invariant defined using the spectrum of $D_E^\dM$.
See \cite{APS1} for details.

Next we want to find criteria which ensure that the index of $D_E^{+,\mathsf{APS}}$ vanishes.
This is based on the \emph{Lichnerowicz formula} (see e.g.\ \cite{LM}*{Thm.~8.17}):
\begin{equation}
D_E^2 = \nabla^*\nabla + \frac{\scal}{4}\cdot\id + \K^E
\label{eq:Lichnerowicz}
\end{equation}
where $\scal$ denotes the scalar curvature of $M$ and $\K^E$ is the symmetric curvature endomorphism on $\Sigma M\otimes E$ induced by $R^E$,
\begin{equation}
\K^E (\sigma\otimes e) = \sum_{1\leq i<j\leq n} f_i \cdot f_j \cdot \sigma \otimes R^E(f_i,f_j) e.
\label{eq:KE}
\end{equation}
Here $f_1,\ldots,f_n$ is an orthonormal basis of the tangent space of $M$ at the base point of $\sigma\otimes e$.
The tangent vectors act by Clifford multiplication on $\sigma$.

Let $p\in M$. 
We define the \emph{operator norm} of $R^E$ at $p$ by
$$
|R^E_p| := \max\{|R^E(f_1\wedge f_2)e| : f_1,f_2\in T_pM \mbox{ and }e\in E_p\mbox{ with }|f_i|=|e|=1\}.
$$

\begin{lemma}\label{lem:KE}
Let $p\in M$.
Then all eigenvalues $\lambda$ of $\K^E_p$ satisfy
$$
|\lambda| \leq \tfrac{n(n-1)}{2}|R^E_p| .
$$
\end{lemma}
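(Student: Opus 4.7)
The plan is to interpret $\K^E_p$ as a self-adjoint endomorphism of the Hermitian vector space $\Sigma_pM\otimes E_p$, so that the maximal absolute value of its eigenvalues equals its operator norm, and then estimate this operator norm directly from \eqref{eq:KE}. Rewritten basis-freely, each summand in \eqref{eq:KE} is the tensor product $(f_i\cdot f_j)\otimes R^E(f_i,f_j)$ acting on $\Sigma_pM\otimes E_p$, with the first factor acting on the spinor slot by Clifford multiplication and the second on the bundle slot by the curvature operator.

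The first task is to verify self-adjointness. Clifford multiplication by a unit real tangent vector is skew-Hermitian on $\Sigma_pM$, so $(f_i\cdot f_j)^*=f_j\cdot f_i=-f_i\cdot f_j$ for $i\neq j$, while $R^E(f_i,f_j)^*=-R^E(f_i,f_j)$ because the connection on $E$ is compatible with the Hermitian metric. The two minus signs cancel in each tensor summand, so $\K^E_p$ is self-adjoint, and it therefore suffices to bound the operator norm of $\K^E_p$ by $\tfrac{n(n-1)}{2}|R^E_p|$.

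The second task is to estimate the two tensor factors separately. Since $f_i,f_j$ are orthonormal for $i\neq j$, the Clifford relations give $(f_i\cdot f_j)^2=-\mathrm{id}$, so $|f_i\cdot f_j|=1$ as an operator on $\Sigma_pM$. On the bundle side, $|R^E(f_i,f_j)|\le|R^E_p|$ as an operator on $E_p$, directly from the definition stated above the lemma. Applying the triangle inequality and the product formula $|A\otimes B|=|A|\cdot|B|$ for operator norms on tensor products of Hilbert spaces to the $\binom{n}{2}$ summands indexed by $1\le i<j\le n$ yields $|\K^E_p|\le\binom{n}{2}|R^E_p|=\tfrac{n(n-1)}{2}|R^E_p|$, which is the desired bound.

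I do not anticipate any genuine obstacle here: the only point requiring a moment of care is the adjoint bookkeeping in the first step, which legitimizes passing from an eigenvalue estimate to the operator-norm estimate; everything else is a routine application of the triangle inequality and the multiplicativity of operator norms under tensor products.
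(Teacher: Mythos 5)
Your proof is correct, and it takes a route that is conceptually different from the paper's even though both ultimately reduce to bounding each of the $\binom{n}{2}$ summands and applying the triangle inequality. Where you invoke the multiplicativity $|A\otimes B|=|A|\cdot|B|$ of operator norms on tensor products (together with the unitarity of $f_i\cdot f_j$ and the defining bound $|R^E(f_i,f_j)|\le|R^E_p|$), the paper instead estimates the quadratic form $|\langle(f_i\cdot f_j\otimes R^E(f_i,f_j))\phi,\phi\rangle|$ directly: it chooses an orthonormal basis $\sigma_1,\sigma_1',\sigma_2,\sigma_2',\ldots$ of $\Sigma_pM$ adapted to $f_i\cdot f_j$, expands $\phi=\sum_k(\sigma_k\otimes e_k+\sigma_k'\otimes e_k')$, and uses Cauchy--Schwarz and AM--GM on the coefficients. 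In effect the paper re-derives by hand the special case of the tensor-norm identity you quote, making the argument self-contained at the cost of a page of computation, whereas your version is shorter and more transparent. One small remark: your careful self-adjointness bookkeeping in the first paragraph is a nice sanity check (and it is relevant elsewhere, e.g.\ in Corollary~\ref{cor:0pos}), but it is not logically needed to pass from an operator-norm bound to an eigenvalue bound: for any operator $A$ and any eigenvector $\phi$ one has $|\lambda||\phi|=|A\phi|\le|A||\phi|$, so the inequality $|\lambda|\le|A|$ holds regardless of self-adjointness. What self-adjointness buys is equality of $|A|$ with the spectral radius, which you do not actually use.
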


\begin{proof}
Let $\phi\in\Sigma_pM\otimes E_p$ be an eigenvector of $\K^E_p$ for the eigenvalue $\lambda$.
Fix $1\le i < j\le n$.
The endomorphism $\Sigma_pM \to \Sigma_pM$, $\sigma\mapsto f_i\cdot f_j\cdot\sigma$, is skew-symmetric and has square $-\id$.
Thus there exists an orthonormal basis $\sigma_1,\sigma_1',\sigma_2,\sigma_2',\ldots$ of $\Sigma_pM$ such that $f_i\cdot f_j\cdot\sigma_k=\sigma_k'$ and $f_i\cdot f_j\cdot\sigma_k'=-\sigma_k$.
Write $\phi=\sum_k (\sigma_k\otimes e_k + \sigma_k'\otimes e_k')$ for suitable $e_k,e_k'\in E_p$.
Then $|\phi|^2 = \sum_k (|e_k|^2+|e_k'|^2)$.
We compute
\begin{align}
|\<(f_i\cdot f_j\otimes R^E(f_i,f_j))\phi,\phi\>|
&=
\big|\sum_{k\ell}\<\sigma_k'\otimes R^E(f_i,f_j)e_k - \sigma_k\otimes R^E(f_i,f_j)e_k' , \sigma_\ell\otimes e_\ell + \sigma_\ell'\otimes e_\ell'\>\big| \notag\\
&=
\big|\sum_k\< R^E(f_i,f_j)e_k,e_k'\>-\< R^E(f_i,f_j)e_k',e_k\>\big| \notag\\
&\le
2\sum_k |R^E_p| |e_k| |e_k'| \notag\\
&\le
\sum_k|R^E_p| (|e_k|^2+ |e_k'|^2) \notag\\
&=
|R^E_p| |\phi|^2 . \notag
\end{align}
Summing over $i$ and $j$ yields
$$
|\lambda| |\phi|^2
=
|\<\K^E_p\phi,\phi\>|
\le
\tfrac{n(n-1)}{2}|R^E_p| |\phi|^2
$$
which proves the lemma.
\end{proof}

Put $\|R^E\| := \max_{p\in M} |R^E_p|$.

\begin{corollary}\label{cor:0pos}
Assume
\begin{equation}
\min_M \scal \ge 2n(n-1) \|R^E\|.
\label{eq:scalK}
\end{equation}
Then the $0$-zero part of the right hand side of \eqref{eq:Lichnerowicz}, $\frac14 \scal  \id + \K^E$, is everywhere positive semidefinite.
If the inequality in \eqref{eq:scalK} is strict, then $\frac14\scal\id + \K^E$ is everywhere positive definite.
\hfill$\Box$
\end{corollary}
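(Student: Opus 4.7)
The plan is to diagonalize $\K^E_p$ pointwise and combine the eigenvalue bound from Lemma~\ref{lem:KE} with the hypothesis \eqref{eq:scalK}. Since $\K^E_p$ is symmetric, it suffices to check that every eigenvalue of $\tfrac14\scal(p)\id + \K^E_p$ is nonnegative (respectively positive).

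Fix $p\in M$ and let $\mu$ be any eigenvalue of $\tfrac14\scal(p)\id + \K^E_p$. Then $\mu = \tfrac14\scal(p) + \lambda$ for some eigenvalue $\lambda$ of $\K^E_p$. Lemma~\ref{lem:KE} gives $\lambda \ge -\tfrac{n(n-1)}{2}|R^E_p| \ge -\tfrac{n(n-1)}{2}\|R^E\|$, so
$$
\mu \;\ge\; \tfrac14\scal(p) - \tfrac{n(n-1)}{2}\|R^E\| \;\ge\; \tfrac14 \cdot 2n(n-1)\|R^E\| - \tfrac{n(n-1)}{2}\|R^E\| \;=\; 0,
$$
where the second inequality uses \eqref{eq:scalK}. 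This proves positive semidefiniteness. If the inequality in \eqref{eq:scalK} is strict, the same chain yields $\mu > 0$, so $\tfrac14\scal\id + \K^E$ is positive definite at every point.

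There is no real obstacle here; the only thing to watch is that Lemma~\ref{lem:KE} controls $|\lambda|$, but what we need is only a lower bound on $\lambda$, which follows immediately. The statement is essentially just a repackaging of Lemma~\ref{lem:KE} combined with the scalar curvature hypothesis.
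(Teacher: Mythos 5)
Your argument is correct and is precisely the direct deduction the paper intends (the paper supplies no proof, treating the corollary as immediate from Lemma~\ref{lem:KE}): diagonalize the symmetric endomorphism $\K^E_p$, use the eigenvalue bound $\lambda \ge -\tfrac{n(n-1)}{2}|R^E_p| \ge -\tfrac{n(n-1)}{2}\|R^E\|$, and compare with $\tfrac14\scal(p)$ via \eqref{eq:scalK}. Your remark that only the lower bound on $\lambda$ is needed, while Lemma~\ref{lem:KE} gives a two-sided bound, is a fair observation but does not change the substance.
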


This now leads to a vanishing result for the kernel of the Dirac operator with weak APS-boundary conditions.
Denote by $\nu$ the interior unit normal field of $\dM$.
Let $H\colon \dM\to\R$ be the mean curvature of $\dM$ with respect to $\nu$.

\begin{proposition}\label{prop:vanish}
Let $M$ be a connected compact Riemannian spin manifold and $E\to M$ a Hermitian vector bundle with compatible connection.
Assume \eqref{eq:scalK} and $H\geq0$.
Furthermore, let there be a point $p\in M$ such that $H(p)>0$ or $\scal(p) > 2n(n-1)|R^E_p|$.
Then
$$
\ind(D_E^{+,\mathsf{APS}}) = \dim\ker(D_E^\mathsf{wAPS}) = 0.
$$
\end{proposition}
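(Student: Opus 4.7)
The plan is to establish directly the vanishing of both $\ker(D_E^{+,\mathsf{APS}})$ and $\ker(D_E^{-,\mathsf{wAPS}})$; by the index identity \eqref{eq:APS}, the conclusion then follows. So fix a spinor $\phi$ in one of these kernels; the goal is to show $\phi\equiv 0$.

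I start from the Lichnerowicz identity \eqref{eq:Lichnerowicz}. Pairing with $\phi$, integrating over $M$, and integrating by parts in the $\nabla^*\nabla$ term produces the boundary contribution $\int_\dM\<\nabla_\nu\phi,\phi\>$. Using the standard Gauss-type decomposition
$$
D_E\phi = \nu\cdot\nabla_\nu\phi - \nu\cdot D_E^\dM(\phi|_\dM) - \tfrac{H}{2}\,\nu\cdot\phi|_\dM
$$
near $\dM$ (under the natural identifications of $\Sigma^\pm M|_\dM$ with $\Sigma\dM$, with a sign of $D_E^\dM$ depending on the chirality of $\phi$), and using $D_E\phi=0$, one arrives at an identity of the shape
$$
0 = \int_M|\nabla\phi|^2 + \int_M\<(\tfrac{\scal}{4}\id + \K^E)\phi,\phi\> + \tfrac{1}{2}\int_\dM H|\phi|^2 - \int_\dM\<D_E^\dM(\phi|_\dM),\phi|_\dM\>,
$$
where the conventions are arranged so that the spectral projection encoding the (weak) APS boundary condition places $\phi|_\dM$ in the subspace where $-D_E^\dM\geq 0$, making the last summand nonnegative.

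All four summands are then nonnegative: the first trivially, the second by Corollary~\ref{cor:0pos}, the third because $H\geq 0$, and the fourth by the spectral condition. Hence each summand vanishes. In particular $\nabla\phi\equiv 0$, so $|\phi|$ is constant on the connected manifold $M$. If $\scal(p)>2n(n-1)|R^E_p|$ at some $p\in M$, then $\tfrac{\scal}{4}\id + \K^E$ is positive definite at $p$ by Corollary~\ref{cor:0pos}, and the pointwise vanishing of the nonnegative integrand $\<(\tfrac{\scal}{4}\id + \K^E)\phi,\phi\>$ forces $\phi(p)=0$, hence $\phi\equiv 0$. If instead $H(p)>0$ at some $p\in\dM$, continuity produces a neighborhood $U\subset\dM$ of $p$ on which $H>0$; the pointwise vanishing of the nonnegative integrand $H|\phi|^2$ on $\dM$ then gives $\phi|_\dM\equiv 0$ on $U$, and constancy of $|\phi|$ again yields $\phi\equiv 0$.

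The main obstacle is the careful sign bookkeeping in the Gauss-type decomposition of $D_E$ near $\dM$ and the matching of the two natural identifications $\Sigma^\pm M|_\dM\cong\Sigma\dM$ with the spectral subspaces of $D_E^\dM$ selected by the APS, respectively weak APS, condition. Once the signs are arranged so that the boundary summand carries the correct sign, the proof follows the classical Lichnerowicz template for half-spinors with spectral boundary conditions.
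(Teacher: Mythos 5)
Your argument is correct and follows essentially the same path as the paper's proof: apply the Lichnerowicz formula, integrate by parts, rewrite the boundary term via the Gauss-type decomposition of $D_E$ along $\dM$, observe that the spectral boundary condition together with $H\ge 0$ and Corollary~\ref{cor:0pos} makes all four summands nonnegative, and then use constancy of $|\phi|$ together with the strict hypothesis at $p$ to force $\phi\equiv 0$. The only cosmetic difference is that the paper proves vanishing of $\ker(D_E^{\mathsf{wAPS}})$ on the full spinor bundle at once (which dominates both of the kernels you treat and yields the stated $\dim\ker(D_E^{\mathsf{wAPS}})=0$ directly), and that the mean-curvature coefficient in the boundary identity is $\tfrac{n-1}{2}H$ in the paper's normalization of $H$, rather than $\tfrac{H}{2}$; this does not affect the nonnegativity argument.
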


\begin{proof}
Let $\phi\in \ker(D_E^\mathsf{wAPS})$.
We need to show that $\phi=0$.
It is known that $\phi$ is smooth up to the boundary, see e.g.\ \cite{BB}*{Cor.~7.18}.
By Corollary~\ref{cor:0pos}, the endomorphism field $\K^E_S := \frac{\scal}{4}\cdot\id + \K^E$ is positive semidefinite.
The Lichnerowicz formula \eqref{eq:Lichnerowicz} and an integration by parts yield
\begin{align*}
0 
&= 
\int_M\<D_E^2\phi,\phi\> \dV \\
&=
\int_M(\<\nabla^*\nabla\phi,\phi\> + \<\K^E_S\phi,\phi\>) \dV \\
&=
\int_M(|\nabla\phi|^2 + \<\K^E_S\phi,\phi\>) \dV + \int_\dM \<\nabla_\nu\phi,\phi\> \dA .
\end{align*}
Here $\dV$ is the volume element on $M$ and $\dA$ that on $\dM$.
In order to control the boundary term, we use the relation
$$
-\nu\cdot D\phi = D_E^\dM\phi + \nabla_\nu\phi - \tfrac{n-1}{2}H\phi
$$
which holds along the boundary, see e.g.\ \cite{B96}*{Prop.~2.2}.
Since $\phi$ is harmonic this implies
$$
\int_\dM \<\nabla_\nu\phi,\phi\> \dA = - \int_\dM \<D_E^\dM\phi,\phi\> \dA + \frac{n-1}{2}\int_\dM H|\phi|^2\dA
$$
and hence
\begin{equation} \label{eq:Lichn_bound} 
0= \int_M|\nabla\phi|^2 \dV + \int_M\<\K^E_S\phi,\phi\> \dV   - \int_\dM \<D_E^\dM\phi,\phi\> \dA + \frac{n-1}{2}\int_\dM H|\phi|^2\dA.
\end{equation} 
All four summands on the right hand side are nonnegative; 
the second one because of Corollary~\ref{cor:0pos}, the third one because we imposed weak APS-boundary conditions, and the last one because of $H\ge0$.
Thus all four terms must be zero.
In particular, $\phi$ is parallel and hence $|\phi|$ is constant.
If $\phi\neq0$, then we conclude that $H\equiv0$ and that $\phi$ lies everywhere in the kernel of $\K^E_S$.
Thus $\K^E_S$ is nowhere positive definite.
\end{proof}

Combining Proposition~\ref{prop:vanish} with \eqref{eq:APS} yields

\begin{corollary}\label{cor:vanish}
Let $M$ be a connected compact Riemannian spin manifold and $E\to M$ a Hermitian vector bundle with compatible connection.
If $H\ge0$ and $2n(n-1) \|R^E\| < \min_M \scal$ then
$$
\int_M \Ahat(M) \wedge \ch(E) + \int_\dM T(\Ahat(M) \wedge \ch(E)) - \frac{\dim(\ker(D_E^\dM)) + \eta(D_E^\dM)}{2} = 0.
$$
\end{corollary}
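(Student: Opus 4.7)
The statement is essentially a combination of the vanishing result Proposition~\ref{prop:vanish} and the Atiyah--Patodi--Singer index formula \eqref{eq:APS}, so the plan is to check that the hypotheses of the proposition are implied by the strict curvature inequality in the corollary, and then to read off the conclusion from \eqref{eq:APS}.

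First I would note that the strict inequality $2n(n-1)\|R^E\| < \min_M \scal$ implies the non-strict hypothesis \eqref{eq:scalK}, namely $\min_M \scal \ge 2n(n-1)\|R^E\|$. Moreover, since $\|R^E\| = \max_{p\in M} |R^E_p|$, for \emph{every} point $p\in M$ one has
\[
\scal(p) \;\ge\; \min_M \scal \;>\; 2n(n-1)\|R^E\| \;\ge\; 2n(n-1)|R^E_p|,
\]
so the existence of a point $p\in M$ with $\scal(p) > 2n(n-1)|R^E_p|$ is automatic. Combined with the assumption $H\ge 0$, this gives exactly the hypotheses of Proposition~\ref{prop:vanish}.

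Applying Proposition~\ref{prop:vanish}, I would conclude that $\ind(D_E^{+,\mathsf{APS}}) = 0$. Substituting this into the APS index formula \eqref{eq:APS} then yields
\[
0 \;=\; \int_M \Ahat(M) \wedge \ch(E) + \int_\dM T(\Ahat(M) \wedge \ch(E)) - \frac{\dim(\ker(D_E^\dM)) + \eta(D_E^\dM)}{2},
\]
which is the desired identity.

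There is essentially no obstacle here; the entire work has already been done in proving Lemma~\ref{lem:KE}, Corollary~\ref{cor:0pos}, and Proposition~\ref{prop:vanish}. The only nontrivial observation is the simple chain of inequalities above showing that the pointwise strict curvature condition needed to invoke Proposition~\ref{prop:vanish} follows automatically from the uniform strict inequality assumed in the corollary, so that no additional pointwise hypothesis on $M$ is required.
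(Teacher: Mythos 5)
Your proof is correct and follows exactly the same route as the paper, which deduces the corollary by combining Proposition~\ref{prop:vanish} with the APS index formula \eqref{eq:APS}; the paper simply omits the elementary verification that the uniform strict inequality $2n(n-1)\|R^E\|<\min_M\scal$ implies both \eqref{eq:scalK} and the pointwise strict inequality at some (indeed, every) point, which you correctly spell out.
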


\subsection{\texorpdfstring{Infinite $K$-area}{Infinite K-area}} \label{infiniteKarea} 

Our nonexistence proof for metrics with certain properties is based on the concept of $K$-area as introduced in \cite{G1996}*{Sec.~4}.
For manifolds without boundary $K$-area has been investigated in \cites{G1996,Hanke2012,Fukumoto2015,Hunger2019}.  

We call a Hermitian vector bundle $E$ over $M$ with connection \emph{admissible} if it is isomorphic to the trivial bundle with trivial connection over a neighborhood of the boundary and it has at least one nontrivial Chern number.
The latter means that there are $\gamma_j\in\N_0$ such that 
$$
\int_M \c_{\gamma_1}(E) \wedge \cdots \wedge \c_{\gamma_m}(E) \neq 0.
$$
Here $\c(E) = \c_0(E)+\c_1(E)+\ldots+\c_m(E) = 1+\c_1(E)+\ldots+\c_m(E)$ is the Chern form of $E$.
Admissible bundles can exist only on even-dimensional manifolds because $\c_j(E)$ has even degree $2j$.
Indeed, the dimension of $M$ satisfies $n=2(\gamma_1+\ldots+\gamma_m)$.

Equivalently, one may demand that 
$$
\int_M \ch_{\gamma_1}(E) \wedge \cdots \wedge \ch_{\gamma_m}(E) \neq 0
$$
for some $\gamma_j\in\N_0$.
Here $\ch(E) = \ch_0(E)+\ch_1(E)+\ldots+\ch_m(E) = \mathrm{rank}(E)+\ch_1(E)+\ldots+\ch_m(E)$ is the Chern character form of $E$.
The Chern numbers and the Chern character numbers can be expressed as linear combinations of each other.

Note that the support of the curvature $R^E$ and hence that of $\c_j(E)$ and $\ch_j(E)$ for $j\ge1$ is contained in the interior of $M$.

\begin{definition} \label{Karea} 
We say that an even-dimensional orientable compact connected Riemannian manifold $M$ with boundary has \emph{infinite $K$-area} if for each $\eps>0$ there exists an admissible $E$ such that $\|R^E\|<\eps$.
\end{definition}

\begin{remark}
This property is independent of the Riemannian metric on $M$.
Changing the metric changes the definition of the norm of $R^E$ but since $M$ is compact, the norms coming from two different metrics are equivalent.

The definition does not require $M$ to have a spin structure.
We only need an orientation so that we can integrate the characteristic forms.
Thus having infinite $K$-area is a property of $M$ as an orientable compact connected manifold.
\end{remark}

Following \cite{G1996} we consider the Adams operations.
Let $E$ be a Hermitian vector bundle over $M$ with connection.
For $k\in\N_0$ there is a virtual bundle $\Psi_kE = \Psi_k^+E - \Psi_k^-E$ with the property 
\begin{equation}
\ch_j(\Psi_k E)=\ch_j(\Psi_k^+ E)-\ch_j(\Psi_k^- E)=k^j\ch_j(E).
\label{eq:chAdams}
\end{equation}
The case $j=0$ shows that the Adams operation $\Psi_k$ preserves the rank.
Both bundles $\Psi_k^+E$ and $\Psi_k^- E$ are universal expressions in tensor products of exterior products of $E$, see \cite{A1989}*{Section~3.2} for details.

For a multi-index $k=(k_1,\ldots,k_m)$ we put 
$$
\Psi_kE := \Psi_{k_1}E\otimes\cdots\otimes\Psi_{k_m}E
$$
and rewrite this virtual bundle as a difference of honest bundles by
\begin{align*}
\Psi_kE 
&=
\bigoplus_{\genfrac{}{}{0pt}{}{\mathrm{even\,\#}}{\mathrm{of}\,-\,\mathrm{'s}}}\Psi_{k_1}^\pm E\otimes\cdots\otimes\Psi_{k_m}^\pm E -\bigoplus_{\genfrac{}{}{0pt}{}{\mathrm{odd\,\#}}{\mathrm{of}\,-\,\mathrm{'s}}}\Psi_{k_1}^\pm E\otimes\cdots\otimes\Psi_{k_m}^\pm E 
=: \Psi^+_kE - \Psi^-_kE .
\end{align*}
Again, $\Psi_k^+E$ and $\Psi_k^- E$ are universal expressions in tensor products of exterior products of $E$.
Hence they inherit natural Hermitian metrics and connections and they are trivial near the boundary if $E$ is so.
Moreover,
\begin{equation}
\|R^{\Psi_k^\pm E}\|\le c_k \|R^E\|
\label{eq:AdamsR}
\end{equation}
where the constant $c_k$ depends only on $k$.

\begin{lemma}\label{lem:nonvanish}
Let $M$ be an oriented compact connected manifold of even dimension $n=2m$ with boundary.
Let $E$ be an admissible bundle of rank $r$.
Let $\omega=1+\omega_1 + \ldots + \omega_m$ be a smooth mixed differential form on $M$ where $\omega_j$ has degree $2j$.

Then there exists $k=(k_1,\ldots,k_m)\in \{0,1,\ldots,m\}^m$ such that 
$$
\int_M \omega\wedge\ch(\Psi_{k}E) \neq r^m \int_M \omega
$$
and 
$$
\|R^{\Psi_k^\pm E}\|\le c(m) \|R^E\|
$$
where $c(m)$ is a constant only depending on $m$.
\end{lemma}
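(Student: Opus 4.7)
The plan is to view $f(k_1, \ldots, k_m) := \int_M \omega \wedge \ch(\Psi_k E)$ as a polynomial in the integer variables $k_1, \ldots, k_m$, to show it is not identically equal to the constant $r^m \int_M \omega$, and then to produce the required $k$ by polynomial interpolation on the grid $\{0, 1, \ldots, m\}^m$. By multiplicativity of the Chern character under tensor products and by \eqref{eq:chAdams},
$$
\ch(\Psi_k E) = \prod_{i=1}^m \ch(\Psi_{k_i} E) = \prod_{i=1}^m \sum_{j=0}^m k_i^j\, \ch_j(E),
$$
where the inner sum truncates at $j = m$ since $\ch_j(E)$ is a $2j$-form on a $2m$-manifold. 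Expanding the product, the coefficient of the monomial $k_1^{a_1} \cdots k_m^{a_m}$ in $f$ equals $\int_M \omega \wedge \ch_{a_1}(E) \wedge \cdots \wedge \ch_{a_m}(E)$; in particular $f$ has degree at most $m$ in each variable.

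I will then locate a nonzero nonconstant coefficient. Since $\ch_0(E) = r$, evaluating at $k = 0$ yields $f(0, \ldots, 0) = r^m \int_M \omega$. Admissibility in the Chern character formulation supplies $\gamma_1, \ldots, \gamma_m \in \N_0$ with $\int_M \ch_{\gamma_1}(E) \wedge \cdots \wedge \ch_{\gamma_m}(E) \ne 0$; the integrand must have top degree, so $\sum_i \gamma_i = m$, which forces $(\gamma_1, \ldots, \gamma_m) \ne (0, \ldots, 0)$ and $\gamma_i \le m$ for each $i$. Because $\ch_{\gamma_1}(E) \wedge \cdots \wedge \ch_{\gamma_m}(E)$ already has top degree, only the component $\omega_0 = 1$ of $\omega$ contributes to the coefficient of $k_1^{\gamma_1} \cdots k_m^{\gamma_m}$ in $f$, which therefore equals the nonzero number $\int_M \ch_{\gamma_1}(E) \wedge \cdots \wedge \ch_{\gamma_m}(E)$. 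Hence $f - r^m \int_M \omega$ is a nonzero polynomial of degree at most $m$ in each of $m$ variables. Since $\{0, 1, \ldots, m\}^m$ consists of $m+1$ nodes in each direction, such a polynomial cannot vanish identically on it, and any $k$ in this grid where it is nonzero is a valid choice.

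Finally, for the curvature bound, each factor satisfies $\|R^{\Psi_{k_i}^\pm E}\| \le c_{k_i}\|R^E\|$ by \eqref{eq:AdamsR}, with $c_{k_i}$ uniformly bounded in terms of $m$ because $k_i \in \{0, 1, \ldots, m\}$. Curvatures of tensor product connections add across the factors, so the operator norm bound picks up a factor $m$ when one forms the product over $i = 1, \ldots, m$; taking direct sums leaves the operator norm unchanged, so both $\Psi_k^+ E$ and $\Psi_k^- E$ satisfy the required bound with a constant $c(m)$ depending only on $m$. I expect the nonvanishing step to be the only genuine obstacle, and it is resolved precisely because admissibility, rephrased through Chern character numbers, delivers exactly the nonconstant monomial whose coefficient in $f$ needs to be nonzero.
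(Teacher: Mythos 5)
Your proposal is correct and takes essentially the same route as the paper. You define $f(k)=\int_M\omega\wedge\ch(\Psi_kE)$, expand via \eqref{eq:chAdams} to obtain a polynomial of degree $\leq m$ in each variable, use admissibility (top-degree considerations forcing $\sum\gamma_i=m$, so only $\omega_0=1$ contributes) to locate a nonzero nonconstant coefficient, and then invoke the standard fact that a nonzero polynomial of degree $\leq m$ in each of $m$ variables cannot vanish on $\{0,\ldots,m\}^m$; the paper phrases the same computation contrapositively (defining $P(k)=\int_M\omega\wedge[\ch(\Psi_kE)-r^m]$ and arguing that vanishing on the grid would force all top-degree Chern character numbers to vanish), and invokes the multi-index form of \eqref{eq:AdamsR} directly rather than re-deriving the tensor/direct-sum bounds as you do, but these are only cosmetic differences.
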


\begin{proof}
For $k=(k_1,\ldots,k_m)\in\N_0^m$ we put 
\begin{align*}
P(k_1,\ldots,k_m)
&:=
\int_M \omega\wedge[\ch(\Psi_{k}E) - r^m] 
=
\int_M \omega\wedge[\ch(\Psi_{k_1}E)\wedge\cdots\wedge\ch(\Psi_{k_m}E) - r^m] .
\end{align*}
Expanding $\omega = 1+\omega_1 + \ldots + \omega_m$ and the Chern characters yields, using \eqref{eq:chAdams},
$$
P(k_1,\ldots,k_m)
=
\sum_{\gamma_1+\ldots+\gamma_m=m} k_1^{\gamma_1}\cdots k_m^{\gamma_m} \int_M \ch_{\gamma_1}(E)\wedge\cdots\wedge\ch_{\gamma_m}(E) + \mbox{l.o.t.}
$$
where l.o.t.\ stands for terms of lower total order in $k_1,\ldots,k_m$.
In particular, $P$ is a polynomial in $k_1,\ldots,k_m$ of total degree at most $m$.

If $P(k_1,\ldots,k_m)=0$ for all $k_i\in \{0,1,\ldots,m\}$ then $P$ would vanish as a polynomial, hence
$$
\int_M \ch_{\gamma_1}(E)\wedge\cdots\wedge\ch_{\gamma_m}(E) = 0
$$
for all $\gamma_i\in\N_0$ with $\gamma_1+\ldots+\gamma_m=m$, contradicting the admissibility of $E$.

Equation~\eqref{eq:AdamsR} implies $\|R^{\Psi^\pm_kE}\|\le c(m) \|R^E\|$ since there are only finitely many possibilies for~$k$.
\end{proof}

\begin{corollary}\label{cor:chmnichtnull}
Let $M$ be an oriented compact connected Riemannian manifold of even dimension $n=2m$ with boundary.
Let $E\to M$ be an admissible bundle of rank $r$.
Then there exists an admissible bundle $F\to M$ such that 
$$
\int_M\ch(F) \neq 0
$$
and 
$$
\|R^F\|\le c(m) \|R^E\|
$$
where $c(m)$ is a constant only depending on $m$.
\end{corollary}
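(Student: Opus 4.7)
My plan is to obtain $F$ as a single summand in the virtual bundle $\Psi_k E$ produced by Lemma~\ref{lem:nonvanish}, applied to the trivial form $\omega = 1$. Since $\dim M = 2m \geq 2$, the top-degree component of $\omega$ vanishes, so $\int_M \omega = 0$. The lemma then supplies a multi-index $k = (k_1,\ldots,k_m) \in \{0,1,\ldots,m\}^m$ with
$$
\int_M \ch(\Psi_k E) \;\neq\; r^m \int_M \omega \;=\; 0,
$$
together with the curvature bound $\|R^{\Psi_k^{\pm} E}\| \le c(m)\|R^E\|$.

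Writing $\Psi_k E = \Psi_k^+ E - \Psi_k^- E$ and expanding the Chern character linearly gives
$$
\int_M \ch(\Psi_k^+ E) \;-\; \int_M \ch(\Psi_k^- E) \;\neq\; 0,
$$
so at least one of the two integrals is nonzero. I would define $F$ to be $\Psi_k^+ E$ or $\Psi_k^- E$ accordingly, inheriting the Hermitian metric and compatible connection discussed in the excerpt.

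It remains to verify that $F$ is admissible. The bundles $\Psi_k^\pm E$ are universal tensor/exterior-product expressions in $E$, and this construction preserves both the trivialization and the trivial connection near $\partial M$ that $E$ enjoys; hence $F$ is trivial with trivial connection near $\partial M$. Moreover, $\int_M \ch(F)$ automatically picks out the top-degree summand $\int_M \ch_m(F)$, which is nonzero; by the equivalence between Chern character numbers and Chern numbers recalled just before Definition~\ref{Karea}, this forces some Chern number of $F$ to be nonzero as well. The curvature bound for $F$ is the one already provided by Lemma~\ref{lem:nonvanish}, so no further estimate is needed.

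There is no real obstacle here: the content of the corollary sits entirely in Lemma~\ref{lem:nonvanish}, and the only step beyond that lemma is the elementary passage from a virtual bundle with nonzero top Chern character integral to one of its honest summands. The one point worth stating cleanly in the write-up is precisely why $\omega = 1$ is the correct input: its top-degree part vanishes, so the comparison value $r^m \int_M \omega$ is $0$, and what the lemma guarantees to be nonzero is exactly $\int_M \ch(\Psi_k E)$.
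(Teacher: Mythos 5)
Your proof is correct and follows the same route as the paper: apply Lemma~\ref{lem:nonvanish} with $\omega=1$ (noting $\int_M\omega=0$), split $\Psi_kE$ into $\Psi_k^+E-\Psi_k^-E$, and take $F$ to be whichever honest summand has nonzero $\int_M\ch$. The paper's own proof is terser and simply asserts that both $\Psi_k^{\pm}E$ are admissible, whereas you spell out the admissibility check (triviality near $\partial M$ plus translating the nonzero Chern character number into a nonzero Chern number); this is a welcome but minor elaboration, not a different argument.
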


\begin{proof}
Applying Lemma~\ref{lem:nonvanish} with $\omega=1$ yields $k=(k_1,\ldots,k_m)\in \{0,1,\ldots,m\}^m$ such that 
$$
\int_M \ch(\Psi^+_kE) - \int_M \ch(\Psi^-_kE) = \int_M \ch(\Psi_{k}E) \neq 0.
$$
Both $\Psi^+_kE$ and $\Psi^-_kE$ are admissible.
Thus $F=\Psi^+_kE$ or $F=\Psi^-_kE$ does the job.
\end{proof}

\begin{corollary}\label{cor:KareaProduct}
Let $M$ and $N$ be oriented compact connected manifolds, $M$ with boundary and $N$ without boundary.
If $M$ and $N$ have infinite $K$-area then so has $N\times M$.
\end{corollary}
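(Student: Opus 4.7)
The plan is to build an admissible bundle on $N\times M$ as a stabilised external tensor product of admissible bundles on $N$ and $M$, followed by a modification of the connection near the boundary. Given $\eps>0$, I would first use the infinite $K$-area of $M$ together with Corollary~\ref{cor:chmnichtnull} to produce an admissible bundle $E_M\to M$ of rank $r_M$ with $\|R^{E_M}\|<\delta$ and $\int_M \ch(E_M)\ne 0$, where $\delta>0$ will be fixed at the end; in particular $E_M$ is trivial with trivial connection on a collar $U$ of $\dM$. The proofs of Lemma~\ref{lem:nonvanish} and Corollary~\ref{cor:chmnichtnull} carry over verbatim to the closed manifold $N$, so by the infinite $K$-area of $N$ I likewise obtain a Hermitian bundle $E_N\to N$ with $\|R^{E_N}\|<\delta$ and $\int_N \ch(E_N)\ne 0$.

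Next, I would appeal to the Narasimhan--Ramanan universal connection theorem to isometrically embed $E_N\hookrightarrow \underline{\mathbb{C}}^L$ (for $L$ sufficiently large) in such a way that $\nabla^{E_N}$ is the orthogonal projection of the trivial connection. Writing $G_N:=E_N^\perp\subset\underline{\mathbb{C}}^L$ with its subbundle connection, the Gauss equation gives $\|R^{G_N}\|\lesssim \|R^{E_N}\|$ and bounds the associated second fundamental form $S$ by $\|S\|^2\sim\|R^{E_N}\|$. On $N\times M$ I form
$$
V\;:=\;(\pi_N^*E_N\otimes\pi_M^*E_M)\;\oplus\;\pi_N^*(G_N^{\oplus r_M})
$$
with its natural tensor-sum connection. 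Combining the trivialisation of $E_M$ over $U$ with the topological identification $(E_N\oplus G_N)^{\oplus r_M}\cong\underline{\mathbb{C}}^{Lr_M}$ yields an isomorphism $V|_{N\times U}\cong \underline{\mathbb{C}}^{Lr_M}$, under which the induced connection on $V$ differs from the trivial one by a $1$-form $A$ built out of $S$ and $S^*$, with $\|A\|\lesssim\sqrt{\delta}$. Picking a smooth cutoff $\chi\colon M\to[0,1]$ that vanishes near $\dM$ and equals $1$ outside $U$, I replace the connection on $V|_{N\times U}$ by $\nabla':=d+(\pi_M^*\chi)A$ in this trivialisation (keeping the natural connection elsewhere); the resulting pair $(V,\nabla')$ has trivial connection on a neighbourhood of $N\times\dM$.

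It remains to check the nontriviality of the Chern number of $V$ and to estimate the curvature of $\nabla'$. Since $(V,\nabla')$ has trivial connection near the boundary, $\int_{N\times M}\ch(V,\nabla')$ is a topological invariant; by multiplicativity of the Chern character under the external product $K^0(N)\otimes K^0(M,\dM)\to K^0(N\times M,N\times\dM)$, it equals
$$
\int_N \ch(E_N)\cdot\int_M \ch(E_M)\;\ne\;0
$$
(the summand $\pi_N^*(G_N^{\oplus r_M})$ contributes nothing by degree reasons), so $V$ has a nontrivial Chern number. A direct expansion yields
$$
\|R^{\nabla'}\|\;\le\;\|d\chi\|\,\|A\|\,+\,\|R^V\|\,+\,\|A\|^2\;\lesssim\;\sqrt{\delta},
$$
which can be made smaller than $\eps$ by shrinking $\delta$. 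The main technical obstacle is precisely this last step: while $V$ is topologically trivial near $N\times\dM$, the natural tensor-sum connection is not, and the cutoff correction introduces extra curvature of order $\|d\chi\|\,\|A\|$; the Narasimhan--Ramanan realisation together with the Gauss identity $\|S\|^2\sim\|R^{E_N}\|$ is what keeps this term of order $\sqrt{\delta}$, hence arbitrarily small.
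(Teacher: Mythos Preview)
The paper's argument is far shorter than yours: it simply forms the external tensor product $E_N\boxtimes E_M$, notes that $\int_{N\times M}\ch(E_N\boxtimes E_M)=\int_N\ch(E_N)\cdot\int_M\ch(E_M)\ne0$ and $\|R^{E_N\boxtimes E_M}\|\le\|R^{E_N}\|+\|R^{E_M}\|$, and declares this bundle admissible. You are right to worry that $E_N\boxtimes E_M$ is \emph{not} literally the trivial bundle with trivial connection near $N\times\partial M$ (there it is $(\pi_N^*E_N)^{\oplus r_M}$), a point the paper's proof passes over; your stabilisation-plus-cutoff scheme is a natural attempt to address this.

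Your argument, however, breaks at the claimed bound $\|S\|^2\sim\|R^{E_N}\|$. The Gauss equation for a subbundle of a flat bundle reads $R^{E_N}(X,Y)=S^*(X)S(Y)-S^*(Y)S(X)$, which yields $\|R^{E_N}\|\lesssim\|S\|^2$ but \emph{not} the reverse, and the Narasimhan--Ramanan theorem provides no such control. A flat bundle with nontrivial holonomy already exhibits the failure: $R^{E_N}=0$, yet every realisation as a subbundle of $\underline{\mathbb C}^L$ with projected connection has $S\ne0$ (otherwise $E_N$ would be a parallel, hence trivial-holonomy, subbundle). Put differently, a small-curvature classifying map into a Grassmannian is area-contracting but need not be length-contracting. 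Consequently your connection $1$-form $A$, built from $S$ and $S^*$, is not of order $\sqrt\delta$, the cutoff contribution $\|d\chi\|\cdot\|A\|$ is uncontrolled, and the conclusion $\|R^{\nabla'}\|\lesssim\sqrt\delta$ does not follow. The same issue also undermines the claim $\|R^{G_N}\|\lesssim\|R^{E_N}\|$: since $R^{G_N}(X,Y)=S(X)S^*(Y)-S(Y)S^*(X)$, one can have $R^{E_N}=0$ with $R^{G_N}\ne0$ already when $\mathrm{rank}\,E_N=1$ and $\mathrm{rank}\,G_N=2$.
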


\begin{proof}
We equip $M$ and $N$ with Riemannian metrics and give $N\times M$ the product metric.
Let $\eps>0$.
By Corollary~\ref{cor:chmnichtnull} there exist admissible Hermitian vector bundles $E_M\to M$ and $E_N\to N$ with compatible connections such that
\begin{enumerate}[\myicon]
\item 
$\int_M \ch(E_M) \neq 0$ and $\int_N \ch(E_N) \neq 0$;
\item
$\|R^{E_M}\|<\eps$ and $\|R^{E_N}\|<\eps$.
\end{enumerate}
Then 
$$
\int_{N\times M} \ch(E_N\boxtimes E_M) = \int_{N} \ch(E_N) \cdot \int_{M} \ch(E_M) \neq 0.
$$
Thus $E_N\boxtimes E_M\to N\times M$ is admissible.
Moreover, $\|R^{E_N\boxtimes E_M}\|\le \|R^{E_N}\| +\|R^{E_M}\| < 2\eps$.
\end{proof}

\begin{definition}\label{def:enlargeable}
We say that an $n$-dimensional oriented compact connected Riemannian manifold $M$ with boundary is \emph{area-enlargeable} if for any $\eps>0$ there exists a finite covering $\pi\colon \hat M\to M$ and an $\eps$-area-contracting smooth map $f\colon \hat M\to S^n$ of nonzero degree which is constant on a neighborhood of any connected component of $\partial\hat M$.
Here ``$\eps$-area-contracting'' means that the induced map on $2$-vectors $\Lambda^2df(x)\colon \Lambda^2 T\hat M \to \Lambda^2 TS^n$ is $\eps$-contracting.
\end{definition}

This is an adaptation of the concept of $\Lambda^2$-enlargeability in  \cite{GL}*{Definition 7.1.} to manifolds with boundary.

An even-dimensional area-enlargeable manifold has infinite $K$-area.
Namely, given $\eps>0$ pull back $\Sigma^+S^n$ along an $\eps$-area-contracting map $f$ and obtain $f^*\Sigma^+S^n \to \hat M$.
Now ``integrate over the fibers'' of $\pi$, i.e.\ let $E\to M$ be the bundle with fibers 
$$
E_x = \bigoplus_{y\in\pi^{-1}(x)} (f^*\Sigma^+S^n)_y .
$$

\begin{example}
The $n$-dimensional torus is area-enlargeable and hence has infinite $K$-area if $n$ is even.
If $M$ has infinite $K$-area then $T^k\times M$ has infinite $K$-area as well by Corollary~\ref{cor:KareaProduct} if $k$ is even.
\end{example}

This leads to a stabilized version of infinite $K$-area:

\begin{definition}
We say that an orientable compact connected Riemannian manifold $M$ with boundary has \emph{stably infinite $K$-area} if $T^k\times M$ has infinite $K$-area for some $k$ (and hence for all $k'=k+2\ell$).
\end{definition}

Note that this definition is also meaningful for odd-dimensional $M$.

\begin{lemma}\label{lem:aesiKa}
Let $M$ be an $n$-dimensional oriented compact connected Riemannian manifold $M$ with boundary.
If $M$ is area-enlargeable then it has stably infinite $K$-area.
\end{lemma}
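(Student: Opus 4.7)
The plan is to split on the parity of $n$ and reduce both cases to the observation stated immediately before the lemma: an even-dimensional area-enlargeable manifold with boundary has infinite $K$-area, via pullback of $\Sigma^+ S^n$ along an $\eps$-area-contracting map $\hat M\to S^n$ and fiberwise direct summation along the cover $\hat M\to M$ (the resulting bundle is trivial near $\dM$ because the map is constant there, admissible because the map has nonzero degree, and has $\|R^E\|$ of order $\eps$).

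\textbf{Case $n$ even.} Take $k=0$. Then $M$ itself is even-dimensional and area-enlargeable by hypothesis, hence has infinite $K$-area by the construction just recalled.

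\textbf{Case $n$ odd.} Take $k=1$; I will show that $S^1\times M$ is area-enlargeable, so that, being of even dimension $n+1$, it has infinite $K$-area by the same argument. Given $\eps>0$, pick a finite cover $\pi\colon \hat M\to M$ and an $\eps$-area-contracting map $f\colon \hat M\to S^n$ of nonzero degree, constant near $\partial\hat M$. After composing with a rotation of $S^n$ we may assume $f\equiv \ast$ near $\partial\hat M$, where $\ast\in S^n$ is the basepoint. Form the finite cover $\mathrm{id}\times\pi\colon S^1\times\hat M\to S^1\times M$ and consider the composition
\[
F\colon S^1\times\hat M\xrightarrow{\mathrm{id}\times f} S^1\times S^n\xrightarrow{q} (S^1\times S^n)/(S^1\vee S^n)\cong S^{n+1},
\]
where $q$ collapses the wedge $S^1\vee S^n=S^1\times\{\ast\}\cup\{\ast\}\times S^n$ to the basepoint, realizing $S^1\wedge S^n=S^{n+1}$. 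The map $q$ is smooth (after a standard mollification near the wedge), has degree $\pm 1$, and has a fixed Lipschitz constant $C$ independent of $\eps$. Hence $F$ has nonzero degree, is $C\eps$-area-contracting, and, crucially, is constant on a neighborhood of $S^1\times\partial\hat M$: such a neighborhood is mapped by $\mathrm{id}\times f$ into $S^1\times\{\ast\}\subset S^1\vee S^n$, which is collapsed by $q$. Letting $\eps\to 0$ establishes area-enlargeability of $S^1\times M$.

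\textbf{Main obstacle.} The subtle point is preserving the boundary condition of Definition~\ref{def:enlargeable} when passing to the product with $S^1$, since without care the image of a boundary neighborhood would spread over a nontrivial portion of $S^1\times S^n$ rather than a point. This is precisely why $q$ is chosen to collapse the full wedge $S^1\vee S^n$ (and not merely the point $(\ast,\ast)$): the boundary neighborhood's image under $\mathrm{id}\times f$ lies in $S^1\times\{\ast\}$, a subset of this wedge. The area-contraction estimate is then routine because $C$ does not depend on $\eps$.
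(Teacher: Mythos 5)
Your wedge-collapsing construction of $q$ is a clean way to handle the boundary condition, and your identification of $S^1\wedge S^n\cong S^{n+1}$ is correct. But the area-contraction estimate for $F$ is wrong, and this is where the real difficulty of the odd case lies.

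The problem is your implicit claim that $\mathrm{id}\times f\colon S^1\times\hat M\to S^1\times S^n$ is $\eps$-area-contracting (or $O(\eps)$-area-contracting) when $f$ is. A $2$-vector $v\wedge w$ with $v\in TS^1$ and $w\in T\hat M$ is sent to $v\wedge df(w)$, whose norm is $|v|\,|df(w)|$. Controlling this requires a bound on $|df|$, the Lipschitz constant of $f$, not on $|\Lambda^2 df|$. Area-contraction gives no control on the Lipschitz constant: $f$ may stretch one direction by a large factor as long as it compresses an orthogonal direction even more. So the mixed part of $\Lambda^2 d(\mathrm{id}\times f)$ is unbounded in terms of $\eps$, and consequently $F$ need not be $C\eps$-area-contracting. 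Your ``Main obstacle'' paragraph points at the boundary issue, but the actual obstacle is this mixed-$2$-vector problem, which your construction does not address.

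The paper deals with this by also replacing $S^1$ by a large finite cover $\hat S^1\to S^1$ and using an $\eps'$-\emph{contracting} (not merely area-contracting) map $g\colon\hat S^1\to S^1$ of nonzero degree. Then $g\times f$ sends mixed $2$-vectors with contraction factor $\eps'\cdot L(f)$, where $L(f)$ is the Lipschitz constant of $f$; since $L(f)$ is fixed once $f$ is chosen, one may subsequently pick $\eps'$ small enough to beat it. The quantifier order — first $f$, then $g$ — is essential. To repair your argument, you should replace $\mathrm{id}\times f$ by $g\times f$ for such a $g$; your map $q$ (collapsing $S^1\vee S^n$) can then serve as a concrete model for the map $f_n\colon S^1\times S^n\to S^{n+1}$ used in the paper, and the rest of your argument goes through.
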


\begin{proof}
For even $n$ this is clear: if $M$ is area-enlargeable it has infinite $K$-area and hence stably infinite $K$-area.
Let $n$ be odd.
We fix a smooth map $f_{n}:S^1\times S^n \to S^{n+1}$ of degree $1$.
Given $\eps,\eps'>0$ we find an $\eps$-area-contracting map $f:\hat M\to S^n$ and an $\eps'$-contracting map $g:\hat S^1\to S^1$ for suitable finite coverings $\hat M \to M$ and $\hat S^1 \to S^1$ such that both $f$ and $g$ have nonzero degrees and $f$ is constant on a neighborhood of any connected component of $\partial\hat M$.
Then $g\times f: \hat S^1\times \hat M \to S^1\times S^n$ is easily checked to be $(\eps+c\eps')$-area-contracting where $c$ depends on $f$ but not on $g$.
Furthermore, it is of nonzero degree and constant on a neighborhood of any connected component of $\partial (  \hat S^1 \times \hat M)$.
Composing with $f_n$ we obtain the map $f_n\circ(g\times f): \hat S^1\times \hat M \to S^{n+1}$ of nonzero degree.
This map is $c_n(\eps+c\eps')$-area-contracting where $c_n$ depends on $f_n$.
Since we can make $c_n(\eps+c\eps')$ arbitrarily small by first choosing $f$ and then $g$, we have that $S^1\times M$ is area-enlargeable.
Thus $S^1\times M$ has infinite $K$-area and hence $M$ has stably infinite $K$-area.
\end{proof}

\begin{definition}\label{def:negligible}
Let $X$ be a compact manifold with boundary (which may be empty).
A subset $Y\subset X$ whose closure is contained in the interior of $X$ is called \emph{$K$-negligible} if there exists a smooth map $f\colon (X,\partial X)\to (X,\partial X)$ of nonzero mapping degree which is constant on a neighborhood of the closure of any connected component of $Y$.
\end{definition}

\begin{example}
\begin{enumerate}[\myicon]
\item
If $Y$ is the union of finitely many disjoint closed balls with smooth boundary, then $Y$ is $K$-negligible.
\item 
Any subset of a $K$-negligible set is $K$-negligible.
\item
Let $X'$ be a closed manifold.
If $Y\subset \mathring{X}$ and $Y'\subset X'$ are $K$-negligible then $Y\times Y'$ is $K$-negligible in $X\times X'$.
\end{enumerate}
\end{example}

\begin{lemma}\label{lem:negligible}
Let $X$ be an $n$-dimensional oriented compact connected Riemannian manifold with boundary and let $Y\subset X$ be a $K$-negligible open subset with smooth boundary.
Put $M:=X\setminus Y$.
Then
\begin{enumerate}[(i)]
\item \label{neglKarea}
if $X$ has infinite $K$-area, so has $M$;
\item \label{neglEnlarge}
if $X$ is area-enlargeable, so is $M$.
\end{enumerate}
\end{lemma}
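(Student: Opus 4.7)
The strategy is the same for both parts: transport the data witnessing the respective property from $X$ to $M$ via the map $f\colon (X,\partial X)\to(X,\partial X)$ provided by the $K$-negligibility of $Y$. Put $C:=\|df\|_\infty$, which is finite since $X$ is compact.

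For \emph{(i)}, fix $\delta>0$ and choose, using infinite $K$-area of $X$, an admissible bundle $E\to X$ with $\|R^E\|<\delta/C^2$. Let $F:=f^*E$ with the pulled-back Hermitian metric and connection. I would then check in turn that: (a) $F$ is trivial with trivial connection on a neighborhood of $\partial X$, because by continuity $f$ maps some neighborhood of $\partial X$ into the trivialization neighborhood of $E$; (b) $F$ is trivial with trivial connection on a neighborhood of each component of $\overline Y$, because $f$ is locally constant there; (c) $\|R^F\|\le C^2\|R^E\|<\delta$, from $R^F=f^*R^E$; and (d) for some multi-index $(\gamma_j)$,
\[
\int_M \ch_{\gamma_1}(F)\wedge\dots\wedge\ch_{\gamma_m}(F)=\deg(f)\int_X \ch_{\gamma_1}(E)\wedge\dots\wedge\ch_{\gamma_m}(E)\neq 0,
\]
using that the integrand vanishes on a neighborhood of $\overline Y$ (so integration over $M$ and over $X$ agree) and the identity $\int_X f^*\alpha=\deg(f)\int_X\alpha$ for top forms compactly supported in the interior. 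Then $F|_M$ is the desired admissible bundle on $M$.

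For \emph{(ii)}, fix $\delta>0$ and apply area-enlargeability of $X$ with parameter $\varepsilon:=\delta/C^2$ to obtain a finite cover $\pi\colon\hat X\to X$ and an $\varepsilon$-area-contracting map $g\colon\hat X\to S^n$ of nonzero degree, constant on a neighborhood of each component of $\partial\hat X$. I would then pull back $\pi$ along $f$ to obtain $\pi_f\colon\hat X_f\to X$ together with a canonical lift $\tilde f\colon\hat X_f\to\hat X$ satisfying $\pi\circ\tilde f=f\circ\pi_f$, equip $\hat X_f$ with the pulled-back Riemannian metric (so that $\pi_f$ is a local isometry), and set $\hat M:=\pi_f^{-1}(M)\to M$, a finite cover, together with the candidate $\hat f:=g\circ\tilde f|_{\hat M}\colon\hat M\to S^n$. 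The bound $\|\Lambda^2 d\tilde f\|\le C^2$ is automatic from the local isometry property, so $\hat f$ is $C^2\varepsilon=\delta$-area-contracting. Each connected component of $\pi_f^{-1}(\partial X)$ is by connectedness mapped under $\tilde f$ into a single component of $\partial\hat X$ on which $g$ is constant, and each component of $\pi_f^{-1}(\partial Y)$ lies in the region where $\tilde f$ is constant; hence $\hat f$ is constant near every component of $\partial\hat M$. Finally, for a unit volume form $\omega$ on $S^n$, the pull-back $\tilde f^*g^*\omega$ vanishes on a neighborhood of $\pi_f^{-1}(\overline Y)$, giving
\[
\deg(\hat f)=\int_{\hat M}\hat f^*\omega=\int_{\hat X_f}\tilde f^*g^*\omega=\deg(\tilde f)\deg(g)=\deg(f)\deg(g)\neq 0,
\]
and one passes to a connected component of nonzero degree if needed.

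The real subtlety is not the main estimate but the bookkeeping around $\dM=\partial X\sqcup\partial Y$: one must ensure that the pulled-back bundle (respectively, classifying map) is simultaneously trivial (respectively, constant) on a neighborhood of \emph{every} component of $\dM$, and that this feature survives the restriction from $X$ (resp.\ $\hat X_f$) to $M$ (resp.\ $\hat M$). Once these local triviality/constancy statements are in place, the curvature and area-contraction estimates reduce to the chain rule for the compact map $f$, and the degree and Chern-number calculations reduce to the relative-degree identity $\int_X f^*\alpha=\deg(f)\int_X\alpha$ for top forms compactly supported in the interior of $X$.
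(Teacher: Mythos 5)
Your proof is correct and follows essentially the same approach as the paper's: pull back the admissible bundle (resp.\ the covering and the area-contracting map) along the map $f$ provided by $K$-negligibility, restrict to $M$, and verify the admissibility/enlargeability conditions via the chain rule and a relative-degree computation. The paper's proof is only a brief sketch; you have filled in the supporting details (triviality near both boundary components, the integral identities, the lifting of $f$ to the pulled-back cover) without changing the argument.
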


\begin{proof}
Let $f\colon (X,\partial X)\to (X,\partial X)$ be as in Definition~\ref{def:negligible}.

Ad \ref{neglKarea}.
Given $\eps>0$, let $E\to X$ be an admissible bundle with $\|R^E\|\le\eps$.
Then $\hat E := f^*E|_M \to M$ is admissible with $\|R^E\|\le c\cdot\eps$ where the constant $c$ depends only on $f$.

Ad \ref{neglEnlarge}.
Given $\eps>0$, let $g\colon\hat X\to S^n$ be an $\eps$-area-contracting map as in Definition~\ref{def:enlargeable} for a suitable covering $\pi\colon\hat X\to X$.
Pulling back by $f$ we obtain a commutative diagram
$$
\xymatrix{
f^*\hat X \ar[r]^{\hat f}\ar[d]_{\tilde\pi} & \hat X \ar[d]^\pi \\
X \ar[r]^f & X
}
$$
Put $\hat M := \tilde\pi^{-1}(M) \subset f^*\hat X$.
Then $g\circ\hat f|_{\hat M}\colon \hat M \to S^n$ has nonzero degree, is constant on a neighborhood of any connected component of the boundary of $M$ and is $c\eps$-area-contracting where $c$ depends on $f$.
\end{proof}

\begin{example}
Let $X=T^3$.
The $3$-torus is area-enlargeable.
Let $Y\subset X$ be an open handlebody of genus $g$ with smooth boundary.
We assume that $Y$ is contained in the interior of a small closed $3$-ball inside $X$.
Then $Y$ is $K$-negligible.
Thus $M=X\setminus Y$ is a compact $3$-manifold whose boundary is a surface of genus $g$, see Figure~\ref{fig:threetorus} for the case of genus $1$.
By Lemma~\ref{lem:negligible}, $M$ is also area-enlargeable and hence has stably infinite $K$-area by Lemma~\ref{lem:aesiKa}.

\begin{figure}[!ht]
\includegraphics[width=50mm]{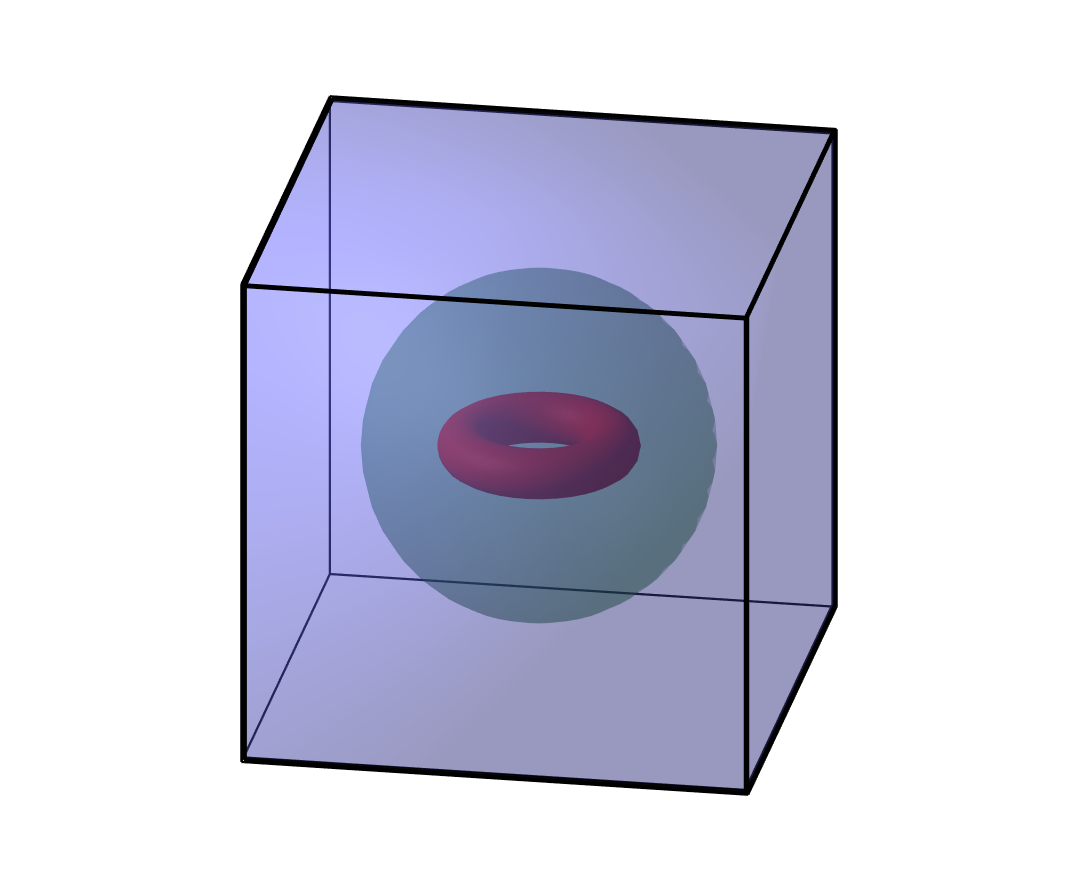}
\caption{Identify opposite sides of cube to obtain $3$-torus and remove the red solid torus}
\label{fig:threetorus} 
\end{figure}
\label{ex:threetorus} 
\end{example}

\subsection{Nonexistence of metrics with positive scalar curvature and mean convex boundary}

The following lemma is probably well known but we include it for the sake of self-containment.
\begin{lemma}\label{lem:conformal}
Let $M$ be an $n$-dimensional compact connected manifold with boundary with $n\ge3$.
Let $g$ be a Riemannian metric on $M$ with $\scal_g\ge0$ and $H_g\ge0$.
If 
\begin{enumerate}[\myicon]
 \item $H_g \not\equiv 0$ or
 \item $\ric_g\not\equiv0$
\end{enumerate}
then $M$ also carries a metric $\tilde g$ with $\scal_{\tilde g}>0$ and $H_{\tilde g}=0$.
\end{lemma}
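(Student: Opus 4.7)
The plan is to deform $g$ conformally, with a preliminary non-conformal perturbation to cover one degenerate case. Write $\tilde g = u^{4/(n-2)}g$ for a smooth $u>0$. The classical transformation laws, adapted to the paper's interior-normal convention, read
$$\scal_{\tilde g} \;=\; u^{-\frac{n+2}{n-2}}\Bigl(-\tfrac{4(n-1)}{n-2}\Delta_g u + \scal_g\,u\Bigr), \qquad H_{\tilde g} \;=\; u^{-\frac{n}{n-2}}\Bigl(-\tfrac{2}{n-2}\partial_\nu u + H_g\,u\Bigr),$$
so it suffices to find a positive $u$ solving the Robin-type eigenvalue problem
$$-\tfrac{4(n-1)}{n-2}\Delta_g u + \scal_g u = \lambda u \text{ in } M, \qquad \tfrac{2}{n-2}\partial_\nu u = H_g u \text{ on } \dM,$$
with $\lambda>0$; then $\tilde g=u^{4/(n-2)}g$ satisfies $\scal_{\tilde g}=\lambda\,u^{-4/(n-2)}>0$ and $H_{\tilde g}=0$.

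This is a self-adjoint elliptic problem with discrete spectrum; Krein--Rutman applied to a suitable resolvent gives a positive first eigenfunction for the lowest eigenvalue $\lambda_1$. Multiplying the PDE by the eigenfunction, integrating by parts with the interior normal (producing a $+\int_{\dM}(\partial_\nu u)u\,\dA$ boundary term) and substituting the boundary condition yields the Rayleigh characterisation
$$\lambda_1 \;=\; \inf_{u\not\equiv 0}\; \frac{\displaystyle\int_M\!\bigl(\tfrac{4(n-1)}{n-2}|\nabla u|^2 + \scal_g\,u^2\bigr)\dV \;+\; 2(n-1)\!\int_{\dM}\! H_g\,u^2\,\dA}{\displaystyle\int_M u^2\,\dV}.$$
Under the hypotheses $\scal_g,H_g\ge 0$, all three numerator terms are non-negative, so $\lambda_1\ge 0$; vanishing would force the eigenfunction to be a nonzero constant and both $\scal_g\equiv 0$ and $H_g\equiv 0$. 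Therefore, as soon as $\scal_g\not\equiv 0$ or $H_g\not\equiv 0$, one has $\lambda_1>0$ and conformal deformation by the positive first eigenfunction completes the proof in this case.

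The remaining case is $\scal_g\equiv 0$, $H_g\equiv 0$, and $\ric_g\not\equiv 0$, which will be handled by a Bourguignon-type perturbation. Fix a point $p$ in the interior of $M$ with $\ric_g(p)\ne 0$ and a non-negative cut-off $\phi\in C_c^\infty(\mathring{M})$ with $\phi(p)>0$ supported in a small ball around $p$. For $t$ near $0$ set $g_t := g - t\phi\,\ric_g$; this remains a Riemannian metric, and because $g_t\equiv g$ near $\dM$ we still have $H_{g_t}\equiv 0$. Since $\lambda_1(g)=0$ is simple with constant eigenfunction, first-order spectral perturbation theory applied to the Rayleigh quotient, combined with the Einstein--Hilbert variation formula $\delta\!\int\scal\,\dV = \int\!\bigl\langle\tfrac12\scal\,g - \ric,\dot g\bigr\rangle\dV$ (whose boundary contribution vanishes because $\dot g = -\phi\,\ric_g$ is interior-supported) and $\scal_g\equiv 0$, will give
$$\frac{d}{dt}\bigg|_{t=0}\lambda_1(g_t) \;=\; \frac{1}{\vol_g(M)}\int_M \phi\,|\ric_g|^2\,\dV \;>\; 0.$$
Hence $\lambda_1(g_t)>0$ for all sufficiently small $t>0$, and applying the conformal step to $g_t$ yields the desired $\tilde g$.

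The hard part will be getting the signs straight in the conformal-change formulae (the interior-normal convention flips the standard sign of $\partial_\eta u$ in the conformal boundary operator) and verifying cleanly that the Bourguignon perturbation is entirely interior-supported, so that $H_{g_t}$ stays identically zero and the boundary integral in the Einstein--Hilbert variation drops out.
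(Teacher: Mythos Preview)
Your proposal is correct and follows essentially the same route as the paper: conformal deformation by the first eigenfunction of the Yamabe operator with Robin boundary condition, combined with a Bourguignon-type Ricci perturbation $g_t = g - t\phi\,\ric_g$ to handle the degenerate case $\scal_g\equiv 0$, $H_g\equiv 0$. The only cosmetic differences are that the paper invokes the strong maximum principle and Hopf's boundary point lemma (rather than Krein--Rutman) for positivity of the first eigenfunction, and computes $\dot\lambda_1(0)$ via the pointwise scalar-curvature variation formula $\tfrac{d}{dt}\big|_{0}\scal_{g_t} = -\Delta_g\tr_g(\chi\ric_g) - \div_g\div_g(\chi\ric_g) + \chi|\ric_g|^2$ rather than the integrated Einstein--Hilbert functional; both reductions use $\scal_g\equiv 0$ in the same way and arrive at the identical expression $\dot\lambda_1(0)=\vol_g(M)^{-1}\int_M\phi|\ric_g|^2\,\dV>0$.
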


\begin{proof}
If $\phi\in C^\infty(M;\R)$ is positive on $M$ including the boundary then we can introduce the conformally equivalent metric $\hat g$ by 
$$
\hat g = \phi^\frac{4}{n-2}g.
$$
The scalar curvatures and mean curvatures of the boundary are related by
\begin{align*}
\big(4\tfrac{n-1}{n-2}\Delta_g + \scal_g\big)\phi &= \scal_{\hat g}\cdot\phi^\frac{n+2}{n-2}\quad \text{ on }M, \\
\big(-\tfrac{2}{n-2}\tfrac{\partial}{\partial\nu} + H_g\big)\phi &= H_{\hat g}\cdot\phi^\frac{n}{n-2}\quad \text{ on }\dM.
\end{align*}
Here $\Delta_g=d^*_gd$ is the nonnegative Laplace operator and $\nu$ is the interior unit normal field along the boundary.
The Yamabe operator $4\tfrac{n-1}{n-2}\Delta_g + \scal_g$ together with Robin boundary conditions $\tfrac{\partial\phi}{\partial\nu} - \tfrac{n-2}{2}H_g\phi=0$ yields a self-adjoint operator on $M$.
Hence it has discrete spectrum $\lambda_1 < \lambda_2 < \ldots$ and smooth eigenfunctions.
The first eigenvalue $\lambda_1$ has multiplicity $1$.
By the strong maximum principle its eigenfunctions do not vanish in the interior of $M$, and by Hopf's boundary point lemma they do not vanish on the boundary either.
If we choose $\phi$ to be a positive eigenfunction for $\lambda_1$ then we find
\begin{align}
\big(4\tfrac{n-1}{n-2}\Delta_g + \scal_g\big)\phi &= \lambda_1\phi\quad \text{ on }M, \notag\\
\big(-\tfrac{2}{n-2}\tfrac{\partial}{\partial\nu} + H_g\big)\phi &= 0\quad \text{ on }\dM,
\label{eq:YamabeRobin}
\end{align}
and therefore
\begin{equation*}
\scal_{\hat g} = \lambda_1\phi^\frac{4}{2-n} \quad\text{ and }\quad H_{\hat g}=0.
\end{equation*}
Thus if $\lambda_1>0$ we have $\scal_{\hat g}>0$ and $\tilde g=\hat g$ does the job.

The variational characterization of $\lambda_1$ reads
$$
\lambda_1 = \min_{\phi\in C^\infty(M;\R)\setminus\{0\}}\frac{\int_M(4\frac{n-1}{n-2}|d\phi|_g^2 + \scal_g\phi^2)\dV_g + 2(n-1) \int_\dM H_g\phi^2 \dA_g}{\int_M \phi^2 \dV_g}
$$
where the minimum is attained by the eigenfunctions.
Now if $\scal_g>0$ somewhere or $H_g>0$ somewhere we conclude $\lambda_1>0$ because the principal eigenfunction vanishes nowhere.
The lemma is proved in this case.

It remains to consider the case $\scal_g\equiv0$ and $H_g\equiv0$.
Then we have $\lambda_1=0$ with constant eigenfunctions.
By assumption, there exists a point $p$ in the interior of $M$ at which the Ricci curvature does not vanish, $\ric_g(p)\neq0$.
We choose a cutoff function $\chi\in C^\infty(M;\R)$ with $\chi(p)>0$, $\chi\ge0$ everywhere, and the support of $\chi$ is compact and disjoint from $\dM$.
We consider the $1$-parameter-deformation of $g$ given by
$$
g(t) = g - t\cdot\chi\cdot\ric_g .
$$
The variational formula for the scalar curvature (\cite{Besse}*{Thm.~1.174~(e)}) yields
\begin{align*}
\tfrac{d}{dt}|_{t=0}\scal_{g(t)}
&=
-\Delta_{g}(\tr_g(\chi\cdot\ric_g)) - \div_g\div_g(\chi\cdot\ric_g) + g(\ric_g,\chi\cdot\ric_g) \\
&=
-\Delta_{g}(\chi\cdot\scal_g) - \div_g\div_g(\chi\cdot\ric_g) + \chi|\ric_g|_g^2 \\
&=
- \div_g\div_g(\chi\cdot\ric_g) + \chi|\ric_g|_g^2 .
\end{align*}
Let $\lambda_1(t)$ be the first eigenvalue of \eqref{eq:YamabeRobin} for the metric $g(t)$ and $\phi_t$ the unique positive eigenfunction normalized by 
$$
\int_M \phi_t^2 \dV_{g(t)} = 1.
$$
In particular, $\phi_0^2=\vol(M,g)^{-1}$.
Since $g(t)$ conincides with $g$ near $\dM$ we have $H_{g(t)}\equiv0$ and hence
$$
\lambda_1(t) = \int_M(4\tfrac{n-1}{n-2}|d\phi_t|_{g(t)}^2 + \scal_{g(t)}\phi_t^2)\dV_{g(t)} .
$$
Since $\phi_0$ is constant on $M$, the function $t\mapsto |d\phi_t|_{g(t)}^2$ vanishes to second order at $t=0$.
Because of this and $\scal_g\equiv0$ we find
\begin{align*}
\dot\lambda_1(0)
&=
\int_M (\tfrac{d}{dt}|_{t=0}\scal_{g(t)}) \phi_0^2 \dV_g \\
&=
\vol(M,g)^{-1} \int_M \big(- \div_g\div_g(\chi\cdot\ric_g) + \chi|\ric_g|_g^2\big) \dV_g \\
&=
\vol(M,g)^{-1} \int_M  \chi|\ric_g|_g^2\dV_g \\
&>
0.
\end{align*}
Thus $\lambda_1(t)>0$ for small $t>0$.
Applying the conformal change described above to $g(t)$ yields the desired metric $\tilde g$.
\end{proof}

\begin{theorem}\label{thm:nonexist}
Let $M$ be a compact connected spin manifold with boundary.
Assume $M$ has stably infinite $K$-area.
Then each Riemannian metric $g$ on $M$ with $\scal\ge0$ and $H\ge0$ is Ricci-flat and satisfies $H\equiv0$.
In particular, $M$ does not admit a Riemannian metric with $\scal>0$ and $H\ge0$.

The same holds for $N\times M$ if $N$ is a closed connected spin manifold with nontrivial $\Ahat$-genus.
\end{theorem}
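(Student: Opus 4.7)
The plan is to argue by contradiction via the APS index formula on a stabilized auxiliary manifold. Suppose $g$ is a metric on $M$ with $\scal_g\ge 0$ and $H_g\ge 0$, while either $\ric_g\not\equiv 0$ or $H_g\not\equiv 0$. Picking an even $k$ (large enough that $\dim(T^k\times M)\ge 3$) such that $X:=T^k\times M$ has infinite $K$-area and equipping $T^k$ with a flat metric, the product metric on $X$ still satisfies $\scal\ge 0$, $H\ge 0$ and inherits one of the nonvanishing conditions. Lemma~\ref{lem:conformal} then produces a conformally related metric on $X$ with $\scal>0$ and $H\equiv 0$. It therefore suffices to prove the following key claim: \emph{a compact connected spin manifold $X$ of even dimension $n=2m$ with infinite $K$-area admits no Riemannian metric with $\scal>0$ and $H\equiv 0$.}

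To prove the claim, assume such a metric exists, put $\scal_{\min}:=\min_X\scal>0$, and choose $\eps>0$ so small that $2n(n-1)c(m)\eps<\scal_{\min}$, where $c(m)$ is the constant from Lemma~\ref{lem:nonvanish}. Pick an admissible bundle $E$ of rank $r$ with $\|R^E\|<\eps$. Both $E$ and the trivial bundle $\underline{\mathbb C}^r$ satisfy the hypothesis of Corollary~\ref{cor:vanish}. Since both are trivial (with trivial connection) in a collar of $\partial X$, the induced boundary Dirac operators coincide with $r$ copies of the untwisted $D^{\partial X}$, so their $\eta$-invariants and kernel dimensions agree. Moreover, the transgression $T(\Ahat(X)\wedge\ch(E))$ in \eqref{eq:APS} is determined by the metric and connection data in a collar of $\partial X$, so it equals $r\,T(\Ahat(X))$ there, as for the trivial bundle. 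Subtracting the two identities from Corollary~\ref{cor:vanish} collapses the boundary contributions and leaves
\begin{equation*}
\int_X\Ahat(X)\wedge\ch(E)\;=\;r\int_X\Ahat(X).
\end{equation*}

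The same reasoning applies to the admissible bundles $\Psi_k^\pm E$ for every multi-index $k\in\{0,1,\ldots,m\}^m$: estimate \eqref{eq:AdamsR} and our choice of $\eps$ keep Corollary~\ref{cor:vanish} in force. Subtracting the identities for $\Psi_k^+ E$ and $\Psi_k^- E$, and using that $\mathrm{rank}(\Psi_k^+ E)-\mathrm{rank}(\Psi_k^- E)=r^m$, we obtain
\begin{equation*}
\int_X\Ahat(X)\wedge\ch(\Psi_k E)\;=\;r^m\int_X\Ahat(X)
\end{equation*}
for every such $k$, in direct contradiction with Lemma~\ref{lem:nonvanish} applied to $\omega=\Ahat(X)$. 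The statement for $N\times M$ follows by the same pattern after stabilizing to $T^k\times N\times M$ and pulling back admissible bundles from the $T^k\times M$-factor: Fubini together with the splitting $\Ahat(T^k\times N\times M)=\pi_N^*\Ahat(N)\wedge\pi^*\Ahat(T^k\times M)$ and the nontriviality of $\Ahat(N)[N]$ transport the entire calculation to the product.

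The single delicate point I expect to have to treat carefully is the locality assertion for the APS transgression form $T(\Ahat\wedge\ch(E))$, namely that it is determined by the restriction of $(E,\nabla^E)$ to a collar of $\partial X$ and hence reduces to $r\,T(\Ahat)$ once $E$ is trivialized there. Once this is granted (it is inherent in the construction of $T$ in \cite{APS1}), the cancellation of all boundary terms proceeds cleanly, and the remaining steps are routine index-theoretic bookkeeping combined with the Adams-operation trick of Lemma~\ref{lem:nonvanish}.
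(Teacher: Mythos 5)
Your main argument matches the paper's: choose an admissible bundle $E$ with small curvature, invoke the APS-index formula via Corollary~\ref{cor:vanish} (equivalently Proposition~\ref{prop:vanish}), and contradict Lemma~\ref{lem:nonvanish} via the Adams operations. The variant of using the trivial bundle of rank $r$ as a reference, rather than forming the rank-zero virtual bundle $\Psi_kE-E_0^{r^m}$ directly, is just bookkeeping, and normalizing to $H\equiv 0$ rather than merely $H\ge 0$ is permitted by Lemma~\ref{lem:conformal}. Up to the $N\times M$ case, this is fine.

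The $N\times M$ case, however, has a genuine gap. You assert the splitting $\Ahat(T^k\times N\times M)=\pi_N^*\Ahat(N)\wedge\pi^*\Ahat(T^k\times M)$, but this holds only for a product metric, whereas the metric on $N\times M$ you are given is arbitrary, and in particular the $\Ahat$-form computed from that metric does not factor. To proceed you must, as the paper does, compare the $\Ahat$-form for the given metric $g$ with the $\Ahat$-form for an auxiliary product metric $g'$ via a transgression form $T\Ahat(g,g')$, observe that the resulting boundary integral of $T\Ahat(g,g')\wedge[\ch(\Psi_k\pi^*E)-r^m]$ vanishes because $\ch(\Psi_k\pi^*E)-r^m$ vanishes identically near the boundary, and only then invoke Fubini for the product metric $g'$. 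You flagged the locality of the APS transgression $T(\Ahat\wedge\ch)$ in a collar as the delicate point, and indeed that is what lets the boundary terms collapse when subtracting index formulas for two bundles agreeing near the boundary; but it is the additional metric-change transgression in the $N\times M$ step that you did not address, and without it the Fubini reduction is unjustified.
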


\begin{proof}
It suffices to prove the theorem for $M$ having infinite $K$-area rather than stably infinite $K$-area.
Namely, if the metric on $M$ satisfies $\scal\ge0$ and $H\ge0$, so does the product metric on $T^k\times M$ where $T^k$ is given a flat metric.
If we then know that the metric on $T^k\times M$ is Ricci-flat and satisfies $H\equiv0$, the same holds for the metric on $M$.
Similarly, if $M$ is $2$-dimensional we can replace $M$ by $T^2\times M$.
Thus there is no loss of generality in assuming that $n=\dim(M)\ge4$.

Assume that $M$ has infinite $K$-area and write $n=2m$ with $m\ge2$.
Let $g$ be a metric on $M$ with $\scal\ge0$ and $H\ge0$.
If $g$ is not Ricci-flat or $H\not\equiv0$, then by Lemma~\ref{lem:conformal} we may assume without loss of generality that $g$ satisfies $\scal>0$ and $H\ge0$.
We will derive a contradiction from this.

For any admissible $E$, the virtual bundle $\Psi_kE$ has the same rank as $E$ which we denote by~$r$.
Hence, for any $k=(k_1,\ldots,k_m)\in\N_0^m$, the virtual bundle $\Psi_k E$ has rank $r^m$ and thus $\Psi_k E- E_0^{r^m}$ has rank $0$ where $E_0^{r^m}$ denotes the trivial bundle of rank $r^m$.
We equip $E_0^{r^m}$ with the trivial connection.
Now we rewrite the virtual bundle $\Psi_kE- E_0^{r^m}$ as a difference of honest bundles by
\begin{align*}
\Psi_kE- E_0^{r^m}
&=
\Psi_k^+E - (\Psi_k^-E \oplus E_0^{r^m})
=: F^+_k - F^-_k .
\end{align*}
By \eqref{eq:AdamsR} there is a constant $c(m)$ depending only on $m$ such that 
$$
\|R^{F^\pm_k}\| \leq c(m) \|R^E\|
$$
for all $k\in\{0,\ldots,m\}^m$.

Recall that the metric on $M$ satisfies $\scal>0$.
Since $M$ has infinite $K$-area we can find an admissible $E$ with $2n(n-1)c(m)\|R^E\|<\min\scal$.
We choose $k=(k_1,\ldots,k_m)\in\{0,1,\ldots,m\}^m$ as in Lemma~\ref{lem:nonvanish} with $\omega=\Ahat(M)$.
We consider the APS-indices for the twist bundles $F^+_k$ and $F^-_k$.
Both bundles are trivial on a neighborhood of the boundary with trivial connection and have the same rank.
Thus the boundary terms in formula \eqref{eq:APS} coincide for the twist bundles $F^+_k$ and $F^-_k$.
We find
\begin{align*}
\ind\Big(D_{F^+_k}^{+,\mathrm{APS}}\Big) - \ind\Big(D_{F^-_k}^{+,\mathrm{APS}}\Big)
&=
 \int_M \Ahat(M) \wedge [\ch(F^+_k) - \ch(F^-_k)] \\
&=
\int_M \Ahat(M)\wedge[\ch(\Psi_kE) - r^m] 
\neq
0.
\end{align*}
On the other hand, we have $2n(n-1)\|R^{F^\pm_k}\|<\min\scal$.
Proposition~\ref{prop:vanish} implies that 
$$
\ind\Big(D_{F^+_k}^{+,\mathrm{APS}}\Big) = \ind\Big(D_{F^-_k}^{+,\mathrm{APS}}\Big) = 0,
$$
which is a contradiction.
Thus $M$ does not carry a metric with $\scal>0$ and $H\ge0$.

Now assume that $N\times M$ carries such a metric $g$.
Choose auxiliary metrics $g_M$ and $g_N$ on $M$ and $N$, respectively.
We obtain a second metric $g':=g_N\oplus g_M$ on $N\times M$.
There is a constant $c''>0$ such that $\|R^E\|_g \le c''\|R^E\|_{g'}$ for all Hermitian bundles $E$ with connection on $N\times M$.
Put $\tilde n:=\dim(N)$.

We choose an admissible $E$ on $M$ with $2(n+\tilde n)(n+\tilde n-1)c_{m}'c''\|R^E\|_{g_M}<\min_{N\times M}\scal_g$.
Again, we choose $k=(k_1,\ldots,k_m)\in\{0,1,\ldots,m\}^m$ as in Lemma~\ref{lem:nonvanish} with $\omega=\Ahat(M)$.
We pull back $E$ to $N\times M$ along the projection $\pi_M\colon N\times M\to M$ onto the second factor.
Then 
\begin{align*}
2(n+\tilde n)(n+\tilde n-1)c(m)\|R^{\pi^*_ME}\|_{g}
&\le 
2(n+\tilde n)(n+\tilde n-1)c(m)c''\|R^{\pi^*_ME}\|_{g'} \\
&= 
2(n+\tilde n)(n+\tilde n-1)c(m)c''\|R^{E}\|_{g_M} \\
&<
\min_{N\times M}\scal 
\end{align*}
and hence
$$
2(n+\tilde n)(n+\tilde n-1)\|R^{\pi^*_MF_k^\pm}\|_{g}<\min_{N\times M}\scal .
$$
Proposition~\ref{prop:vanish} applied to $(N\times M,g)$ yields 
\begin{equation}
\ind\Big(D_{\pi^*_MF^\pm_k}^{+,\mathrm{APS},g}\Big)  = 0.
\label{eq:indexwieder0}
\end{equation}
Here the additional upper index $g$ indicates that we are taking the Dirac operator with respect to the metric $g$.
Note that the index may depend on the choice of metric since the boundary metric and hence the APS-boundary conditions depend on it. 
In order to control this, consider the transgression form $T\Ahat(N\times M,g,g')$ for the $\Ahat$-form with respect to the two metrics $g$ and $g'$, i.e.\
$$
\Ahat(N\times M,g) - \Ahat(N\times M,g') = dT\Ahat(N\times M,g,g').
$$
We compute
\begin{align*}
\ind\Big(D_{F^+_k}^{+,\mathrm{APS},g}\Big) &- \ind\Big(D_{F^-_k}^{+,\mathrm{APS},g}\Big) \\
&=
 \int_{N\times M} \Ahat(N\times M,g) \wedge [\ch(\pi^*_MF^+_k) - \ch(\pi^*_MF^-_k)] \\
&=
 \int_{N\times M} \big(\Ahat(N\times M,g')+dT\Ahat(N\times M,g,g')\big) \wedge [\ch(\pi^*_MF^+_k) - \ch(\pi^*_MF^-_k)] \\
&=
\int_{N\times M} \pi^*_N\Ahat(N,g_N)\wedge\pi^*_M\Ahat(M,g_M)\wedge\pi^*_M[\ch(F^+_k) - \ch(F^-_k)] \\
&\quad
+ \int_{N\times M} d\big(T\Ahat(N\times M,g,g') \wedge [\ch(\pi^*_MF^+_k) - \ch(\pi^*_MF^-_k)]\big) \\
&=
\int_{N} \Ahat(N,g_N)\cdot\int_M\Ahat(M,g_M)\wedge[\ch(F^+_k) - \ch(F^-_k)] \\
&\quad
+ \int_{N\times\dM } T\Ahat(N\times M,g,g') \wedge \pi^*_M[\ch(F^+_k) - \ch(F^-_k)] .
\end{align*}
The boundary integral vanishes because $\ch(F^+_k) - \ch(F^-_k)$ vanishes on a neighborhood of $\dM$.
Hence
$$
\ind\Big(D_{F^+_k}^{+,\mathrm{APS},g}\Big) - \ind\Big(D_{F^-_k}^{+,\mathrm{APS},g}\Big)
=
\int_{N} \Ahat(N,g_N)\cdot\int_M \Ahat(M)\wedge[\ch(\Psi_kE) - r^m] 
\neq 
0.
$$
This contradicts \eqref{eq:indexwieder0}.
\end{proof}

The following example shows that the conclusion in Theorem~\ref{thm:nonexist} is optimal.

\begin{example}
Let $M=S^1(1)\times I$ where $I=[-1,1]$ and $S^1(1)$ is the circle of length $1$.
Let $f_1\colon I\times I\to S^2$ be a map of degree $1$ which maps a disk of radius $\frac12$ centered at $(0,0)$ onto the sphere and its complement onto the north pole.
This map descends to a smooth degree-$1$ map $M\to S^2$.
It is $c$-contracting for some $c>0$ and hence $c^2$-area-contracting.

For $k\in\N$ consider the map $\phi_k\colon [-k,k]\times I \to I\times I$ given by $\phi_k(s,t)=(s/k,t)$.
The map $f_k = f_1\circ\phi_k$ descends to a smooth degree-$1$ map $S^1(2k)\times I \to S^2$ which is $\frac{c^2}{k}$-area contracting.
Clearly, $S^1(2k)\times I$ is a $2k$-fold Riemannian covering space of $M$.
Thus $M$ is area-enlargeable and hence has infinite $K$-area (see Figure~\ref{fig:2}).
\begin{figure}[!ht]
\begin{annotate}
{\includegraphics[scale=0.5]{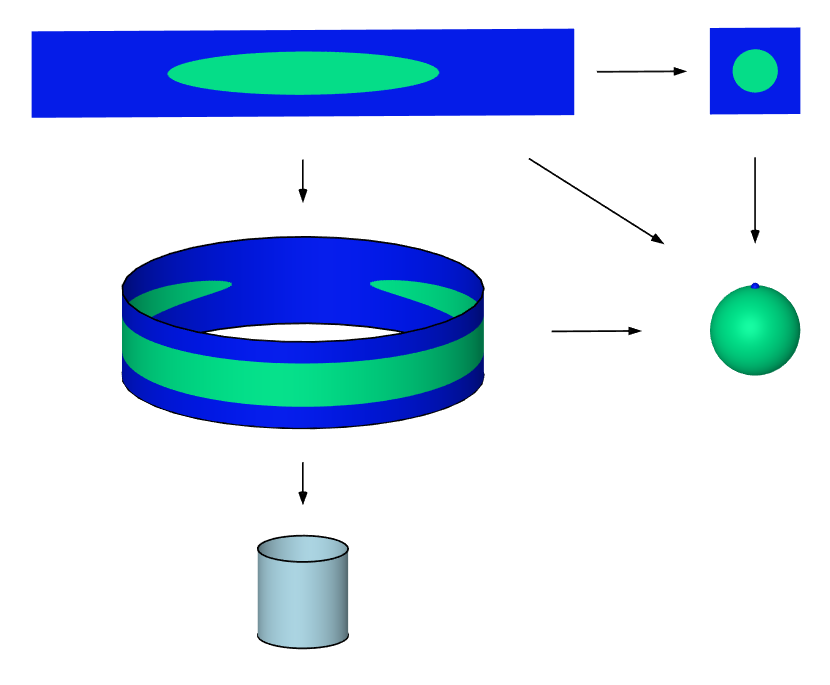}}{0.5}
\note{-9.4,0.2}{\scalebox{1.4}{$S^1(2k)\times I$}}
\note{-4.6,-6}{\scalebox{1.4}{$M$}}
\note{10,0.5}{\scalebox{1.4}{$S^2$}}
\note{8.6,3.5}{\scalebox{1.4}{$f_1$}}
\note{5.5,6.8}{\scalebox{1.4}{$\phi_k$}}
\note{4.9,3.7}{\scalebox{1.4}{$f_k$}}
\end{annotate}

\caption{$M=S^1\times I$ is area-enlargeable}
\label{fig:2} 
\end{figure}

Note that $M$ is flat with totally geodesic boundary.
Now let $N$ be a $K3$-surface equipped with a Ricci-flat metric.
Then $N$ is spin and has $\Ahat$-genus $2$.
Thus $N\times M$ is as in Theorem~\ref{thm:nonexist} and has a Ricci-flat metric with $H\equiv0$.
This metric is not flat though.
\end{example}

\section{Deformations of the metric} \label{spaces} 

Let $M$ be a smooth manifold of dimension $n \geq 2$ with compact boundary $\dM$. 
Given a Riemannian metric $g$ on $M$ we denote by 
\begin{enumerate}[\myicon] 
   \item $\scal_g \colon  M \to \R$ the scalar curvature of $g$, 
   \item $g|_{\dM }\in C^\infty(\dM;(T^*M\otimes T^*M)|_{\dM})$ the restriction of $g$ to $\dM $, 
   \item $g_0\in C^\infty(\dM;T^*\dM\otimes T^*\dM)$ the metric induced on $\dM$,
   \item $\II_g$ the second fundamental form of $\dM \subset M$ with respect to the interior unit normal, 
   \item $H_g  =  \frac{1}{n-1}\tr_g (\II_g)\colon  \dM \to \R$ the mean curvature of $\dM$.
\end{enumerate}  
We denote by $\RR(M)$ the space of smooth Riemannian metrics  on $M$, equipped with the weak $C^{\infty}$-topology. 
Let $\sigma \colon  M \to \R$ be a continuous function which is unchanged throughout this section. 
We put $\RR_{> \sigma}(M) := \{ g\in\RR(M) \mid \scal_g > \sigma \}$. 

The normal exponential map with respect to $g$ along $\dM$ yields a diffeomorphism 
\begin{equation}
[0,\eps) \times \dM  \xrightarrow{\approx} U^g_\eps
\label{eq:normalexp}
\end{equation}
onto the open $\eps$-neighborhood $U^g_\eps$ of $\dM $ with respect to $g$, for $\eps>0$ sufficiently small.
In particular, it induces a smooth structure on the double $\D M = M \cup_{\dM } M$.

We will construct certain deformations of a given metric.
These deformations will be supported near the boundary, in a neighborhood $U^g_\eps$ for small $\eps$.
It will be technically useful to first deform the metric into a standard form near the boundary.
In order to make this more precise, recall that, by the generalized Gauss lemma, any metric $g$ takes the form 
\begin{equation}
     g = dt^2 + g_t , 
\label{eq:metricnearboundary}
\end{equation}
near the boundary.
Here, we use the identification in \eqref{eq:normalexp}, $t$ is the canonical coordinate in $[0,\eps)$ (in other words, the distance from $\dM $), and $(g_t)_{t \in I}$ is a smooth family of Riemannian metrics on $\dM $. 

For any smooth family $(h_t)_{t \in I}$ of $(2,0)$-tensor fields on $\dM$ we define new families $\dot h_t$, $\ddot h_t$, and $h^{(\ell)}_t$ of smooth $(2,0)$-tensor fields  by
\begin{align*} \label{dots} 
      \dot h_t (X,Y) & := \frac{d}{dt} (h_t(X,Y)), \\
      \ddot h_t(X,Y) & := \frac{d^2}{dt^2} (h_t(X,Y)), \\
      h^{(\ell)}_t(X,Y) & := \frac{d^\ell}{dt^\ell} (h_t(X,Y)) .
\end{align*} 
For $0\le t < \eps$
\begin{equation}
   \II_t =  - \tfrac{1}{2} \dot g_t
   \label{eq:II}
\end{equation}
is the second fundamental form (w.r.t.\ the gradient field of $t$) of the hypersurface $N_t \subset M$ at distance $t$ from $\dM $. 
We also consider the Weingarten map $W_t \colon  TN_t \to TN_t$, which is uniquely determined  by the equation
\[
   \langle W_t(X), Y \rangle_{g_t}  =  \II_t(X,Y) = - \tfrac{1}{2} \dot g_t(X, Y) \, . 
\]
Furthermore, 
\[
   H_t = \tfrac{1}{n-1}\tr (W_t) = - \tfrac{1}{2(n-1)} \tr_{g_t} ( \dot g_t)  
\]
is the mean curvature of $N_t$.
Note that in this notation we have $\II_g = \II_0$ and $H_g =  H_0$. 
The scalar curvature of $(M , g)$  is given by 
\begin{equation} \label{scal} 
   \scal_g = \scal_{g_t} + 3\,  \tr(W_t^2)  - \tr(W_t)^2 - \tr_{g_t} ( \ddot g_t) . 
\end{equation}  
See \cite{BGM}*{Prop.~4.1} for details.

\begin{definition}  \label{C-normal} 
Let $C \in \R$. 
A metric $g$ on $M$ is called \emph{$C$-normal} if the $g_t$ in \eqref{eq:metricnearboundary} are given by
\begin{align*}
g_t 
&= g_0 + t\cdot\dot g_0   -   Ct^2 \cdot  g_0  
= g_0 - 2t\cdot\II_{g}   -   Ct^2 \cdot  g_0 .
\end{align*}
\end{definition}  

\begin{remark} \label{remnormal} 
Near the boundary, a $C$-normal metric is uniquely determined by the first and second fundamental form of the boundary and the constant $C$.

For sufficiently large $C$ the scalar curvature of a $C$-normal metric $g$ satisfies $\scal_g > \sigma$ near the boundary.
More precisely,  let $g_0$ be a Riemannian metric and $h$ a symmetric $(2,0)$-tensor field on $\dM $. 
Let $W_0 \colon  T\dM  \to T\dM $ be defined by $\langle W_0(X), Y \rangle_{g_0} =  h(X,Y) $ and set 
\[
     C_0 = C_0(g_0,h, \sigma|_{\dM}) := - \frac{1}{2(n-1)} \max_{\dM } \left(   \scal_{g_0}  - \sigma + 3\, \tr(W_0^2) - \tr(W_0)^2  \right) .
\]
Then the condition $C > C_0$ is equivalent to the scalar curvature of 
\[
    g = dt^2 + g_0 - 2 t \cdot h  - Ct^2  \cdot g_0 
\]
being greater than $\sigma$ along  $\dM $ (and hence in a neighborhood of $\dM $).
This follows from $\tr_{g_0}(\ddot g_0)=-2C(n-1)$ and formula \eqref{scal}.
\end{remark}

\begin{proposition} \label{step1}
Let $K$ be a compact Hausdorff space and let $g \colon  K \to \RR_{> \sigma}( M )$ be a continuous family of metrics of scalar curvature greater than $\sigma$.

Then there exists a constant $C_0$ such that for each $C\ge C_0$ and each neighborhood $\U$ of $\dM$ there exists a continuous map $f \colon  K \times [0,1] \to \RR_{> \sigma} (M)$ such that the following holds  for all $\xi \in K$ and $s \in [0,1]$: 
\begin{enumerate}[(a)]  
   \item $f(\xi,0) = g(\xi)$; 
   \item  \label{cnorm} $f(\xi,1)$ is $C$-normal; 
   \item if $g(\xi)$ is $\widetilde C$-normal then $f(\xi,s)$ is $\big((1-s)\widetilde C +sC\big)$-normal;
   \item \label{initial} $f(\xi,s)|_{\dM } = g(\xi)|_{\dM }$, in particular $f(\xi,s)_0 = g(\xi)_0$, and $\II_{f(\xi,s)} = \II_{g(\xi)}$;
   \item $\ddot f(\xi,s)_0 = (1-s) \ddot g(\xi)_0 -2 s  C g(\xi)_0$; 
   \item \label{higherdiffs} $f(\xi,s)^{(\ell)}_0 = (1-s)g(\xi)^{(\ell)}_0$ for all $\ell\ge3$;
   \item $f(\xi,s)=g(\xi)$ on $M\setminus \U$.
\end{enumerate} 
\end{proposition}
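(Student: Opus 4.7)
The plan is to construct $f(\xi,s)$ as an explicit cut-off linear interpolation in a collar neighborhood of $\dM$ between $g(\xi)$ and a $C$-normal reference metric sharing its $1$-jet along $\dM$, then to invoke the local flexibility lemma of B\"ar-Hanke \cite{BH}*{Thm.~1.2 and Addendum~3.4} to preserve the bound $\scal > \sigma$.

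First I would use the normal exponential map with respect to $g(\xi)$ to identify a collar $U^{g(\xi)}_\eps \cong [0,\eps)\times \dM$ of uniform thickness $\eps>0$ (uniform in $\xi\in K$ by compactness of $K$ and $\dM$) with $U^{g(\xi)}_\eps\subset \U$. On this collar define the $C$-normal reference metric
\[
h(\xi)_t := g(\xi)_0 + t\,\dot g(\xi)_0 - C\,t^2 g(\xi)_0
\]
and fix a smooth cut-off $\chi\colon[0,\infty)\to[0,1]$ with $\chi\equiv 1$ on $[0,\eps/2]$ and $\chi\equiv 0$ on $[\eps,\infty)$, so that $\chi^{(j)}(0)=0$ for every $j\ge 1$. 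The candidate deformation is
\[
\tilde f(\xi,s)_t := g(\xi)_t + s\,\chi(t)\,\bigl(h(\xi)_t - g(\xi)_t\bigr),
\]
extended by $g(\xi)$ outside the collar. Using $h(\xi)_0 = g(\xi)_0$, $\dot h(\xi)_0 = \dot g(\xi)_0$, $\ddot h(\xi)_0 = -2Cg(\xi)_0$, and $h(\xi)^{(\ell)}_0 = 0$ for $\ell\ge 3$, together with $\chi^{(j)}(0)=0$ for $j\ge 1$, a direct Leibniz computation at $t=0$ verifies the jet conditions (a), (d), (e), (f). Condition (b) holds because $\tilde f(\xi,1)_t = h(\xi)_t$ on $[0,\eps/2]$; (c) follows because when $g(\xi)$ is $\widetilde C$-normal, the interpolation equals $g(\xi)_0 + t\dot g(\xi)_0 - [(1-s)\widetilde C + sC]\,t^2 g(\xi)_0$ on $[0,\eps/2]$; and (g) is immediate from the support of $\chi$.

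The constant $C_0$ is supplied by Remark~\ref{remnormal}, made uniform in $\xi$ via compactness of $K$, so that for $C\ge C_0$ the reference $h(\xi)$ satisfies $\scal_{h(\xi)} > \sigma$ in a fixed collar of $\dM$. The main obstacle is controlling the scalar curvature of $\tilde f$ in the transition region $[\eps/2,\eps]$: a computation using \eqref{scal} shows that the correction $\scal_{\tilde f} - \scal_g$ has leading behavior $s\,C\,(n-1)\,\frac{d^2}{dt^2}\bigl[t^2\chi(t)\bigr] + O(s)$, which is $+2sC(n-1)+O(1)$ on $[0,\eps/2]$ (giving the desired large positive contribution) but, since $\int_0^\infty \frac{d^2}{dt^2}[t^2\chi(t)]\,dt = 0$, must be negative of order $sC$ somewhere in the transition. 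To absorb this loss I would invoke the local flexibility lemma of B\"ar-Hanke, applied parametrically in $(\xi,s)\in K\times[0,1]$: it replaces $\tilde f$ in a neighborhood of the transition region by a metric whose scalar curvature decrease relative to $g(\xi)$ is strictly less than the uniform positive margin $\min_{K\times\dM}(\scal_{g(\xi)}-\sigma) > 0$, while preserving the full jet of $\tilde f$ along $\dM$ so that conditions (a)--(f) are untouched. Continuous dependence of the normal exponential map on $g(\xi)$ together with the parametric version of the flexibility lemma yields the required continuity of $f$ on $K\times[0,1]$.
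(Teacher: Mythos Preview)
Your approach is essentially the paper's, and the verification of properties (a)--(g) via the explicit interpolation is correct. On $[0,\eps/2]$, where $\chi\equiv 1$, your $\tilde f(\xi,s)_t = (1-s)g(\xi)_t + s\,h(\xi)_t$ coincides exactly with the deformation $F(\xi,s)$ the paper writes down in \eqref{defF}. The paper never introduces the cut-off $\chi$ or the global $\tilde f$: it simply observes that $F(\xi,s)$ satisfies $\scal>\sigma$ on a uniform collar of $\dM$ (by formula \eqref{scal} the boundary scalar curvature is affine in $s$, so checking $s=0$ and $s=1$ suffices; you only checked $s=1$) and then hands $F$ directly to the local flexibility lemma with closed set $A=\dM$. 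The lemma itself performs the gluing to $g(\xi)$ outside $\U$, so the transition region never needs to be built or analyzed.

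Your description of how the flexibility lemma is invoked is where the real imprecision lies. The lemma does not ``replace $\tilde f$ in a neighborhood of the transition region by a metric whose scalar curvature decrease relative to $g(\xi)$ is strictly less than the uniform positive margin''; it gives no quantitative control of that kind. Its input is a global solution $g(\xi)$ of the open relation $\scal>\sigma$ together with a local deformation $F(\cdot,s)$ solving the same relation on a neighborhood of $\dM$; its output is a global deformation $f(\cdot,s)$ solving the relation everywhere, agreeing with $F$ on a smaller neighborhood of $\dM$ and with $g(\xi)$ outside $\U$. Your transition-region integral argument and the margin $\min_{K\times\dM}(\scal_{g(\xi)}-\sigma)$ are therefore irrelevant: what matters is only that $\tilde f$ (equivalently $F$) satisfies $\scal>\sigma$ on some collar, which you have. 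Once you drop the cut-off and state the flexibility input/output correctly, your argument and the paper's are identical.
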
 
 
\begin{proof} For $\xi \in K$ we consider the Taylor expansion of the smooth map $t \mapsto g(\xi)_t$ at $t = 0$, 
\[
     g(\xi)_t = g(\xi)_0 + \dot g(\xi)_0  \cdot t + \tfrac{1}{2} \ddot g(\xi)_0 \cdot t^2 +  R(\xi)_t .  
\]
Here  $(R(\xi)_t)_{t}$ is a smooth family of symmetric $(2,0)$-tensor fields on $\dM $ which depends continuously on $\xi$ and satisfies 
\begin{equation} \label{remainder} 
     R(\xi)_0 = \dot R(\xi)_0 = \ddot R(\xi)_0 = 0 . 
 \end{equation} 
Set
\begin{equation} \label{defC} 
    C_0 :=  \tfrac{1}{2(n-1)} \max_{\xi \in K}   \| \tr_{g(\xi)_0} ( \ddot g(\xi)_0 )\|_{C^0(\dM )}
\end{equation} 
and let $C\ge C_0$.
Put
\begin{equation} \label{defF} 
  F (\xi,s) := g(\xi)  - s \left( \left( \tfrac{1}{2} \ddot g(\xi)_0  +  C   \cdot g(\xi)_0 \right) \cdot  t^2  + R(\xi)_t \right) . 
 \end{equation} 
Each $F(\xi,s)$ is a Riemannian metric on a neighborhood of $\dM $. 
The neighborhood can be chosen independently of $\xi$ and $s$ because $F$ depends continuously on $(\xi,s)$ and $K$ is compact.

According to  \eqref{scal}, \eqref{remainder}, and \eqref{defC}, $F(\xi,s)$ satisfies along the boundary
 \[
    \scal_{F(\xi,s)} \big|_{ \dM } 
    = 
    \scal_{g(\xi)} \big|_{\dM }  +  s\cdot \left( \tr_{g(\xi)_0} (\ddot g(\xi)_0) + 2C\cdot (n-1)\right) 
    \geq 
    \scal_{g(\xi)} \big|_{\dM } 
    > 
    \sigma|_{\dM }  . 
\]
Hence all $F(\xi,s)$ have scalar curvature greater than $\sigma$ on a common neighborhood $U$ of $\dM $.
By construction, we have for all $\xi \in K$ and $s\in[0,1]$:
\begin{enumerate}[\myicon]
\item
$F(\xi,s)|_{\dM } = g(\xi)|_{\dM }$;
\item
$\dot F(\xi,s)_0 = \dot g(\xi)_0$;
\item
$\ddot F(\xi,s)_0 = (1-s)\ddot g(\xi)_0 -2sC g(\xi)_0$;
\item
$F(\xi,s)^{(\ell)}_0 = (1-s)g(\xi)^{(\ell)}_0$ for all $\ell\ge3$.
\end{enumerate}
 
By the family version of the flexibility lemma (\cite{BH}*{Addendum~3.4}) applied for the second-order open partial differential relation on the space of Riemannian metrics on $M$, defined by the condition that scalar curvature is larger than $\sigma$, $F$ can be replaced by a deformation $f$ of $g$ defined on all of $M$ which coincides with $F$ near $\dM $ and always solves the partial differential relation\footnote{By attaching a small cylinder to $\partial M$ we may pass to a smooth manifold without boundary, as required in \cite{BH}.}. 

Specifically, we obtain an open neighborhood $\dM   \subset U_0 \subset U$ and a continuous map $f \colon  K \times [0,1] \to C^{\infty}  (M , T^*M\otimes T^*M)$ such that for all $\xi \in K$ and $s \in [0,1]$ we have 
\begin{enumerate}[(i)] 
   \item $f(\xi,s) \in \RR_{> \sigma} (M)$, 
   \item $f(\xi,s)|_{U_0 } = F(\xi,s)|_{U_0}$, 
   \item $f(\xi,s)|_{M \setminus U} = g(\xi)|_{M \setminus U}$.
\end{enumerate} 
The neighborhood $U$ in the flexibility lemma may be chosen arbitrarily small; 
in particular, we may assume $U\subset\U$.
Then this $f$ does the job. 
\end{proof} 

\begin{lemma}\label{comptr}  
Let $V$ be a finite dimensional real vector space and let $g_1$ and $g_0$ be two Euclidean scalar products on $V$ such that $\|g_1-g_0\|_{g_0}\le\frac12$.
Then 
\[
    | \tr_{g_1} (h) - \tr_{g_0}(h) | \leq 2 \cdot \| g_1 - g _0\|_{g_0} \cdot \| h\|_{g_0}
\]
holds for all symmetric bilinear forms $h$ on $V$.
Here $\| \cdot \|_{g_0}$ denotes the norm on the space of symmetric bilinear forms induced by $g_0$.
\end{lemma}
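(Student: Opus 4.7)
\medskip

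\textbf{Plan of proof.} My plan is to reduce the statement to a standard matrix calculation by choosing a $g_0$-orthonormal basis $e_1,\ldots,e_n$ of $V$. With respect to this basis, let $A$ be the symmetric matrix representing the bilinear form $g_1 - g_0$ and let $H$ be the symmetric matrix representing $h$. Then the matrix of $g_1$ is $I + A$, the norms on the right-hand side of the inequality become the Hilbert--Schmidt (Frobenius) norms of $A$ and $H$, and the two traces become $\tr_{g_0}(h) = \tr(H)$ and $\tr_{g_1}(h) = \tr\bigl((I+A)^{-1}H\bigr)$.

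The next step is to rewrite the difference. Since $(I+A)^{-1} - I = -(I+A)^{-1}A$, I obtain
\begin{equation*}
\tr_{g_1}(h) - \tr_{g_0}(h) = -\tr\bigl((I+A)^{-1}A\,H\bigr).
\end{equation*}
The standard inequality $|\tr(XY)|\le \|X\|_{HS}\|Y\|_{HS}$ together with $\|XY\|_{HS}\le \|X\|_{op}\|Y\|_{HS}$ then gives
\begin{equation*}
\bigl|\tr_{g_1}(h) - \tr_{g_0}(h)\bigr| \le \bigl\|(I+A)^{-1}\bigr\|_{op}\cdot \|A\|_{HS}\cdot \|H\|_{HS}.
\end{equation*}

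Finally, I want to show that the operator norm factor is bounded by $2$. The assumption $\|g_1-g_0\|_{g_0}\le\tfrac12$ reads $\|A\|_{HS}\le\tfrac12$, and since the operator norm is dominated by the Hilbert--Schmidt norm, also $\|A\|_{op}\le\tfrac12$. By the Neumann series estimate, $\|(I+A)^{-1}\|_{op}\le (1-\|A\|_{op})^{-1}\le 2$. Combining this with the previous display and translating back via $\|A\|_{HS}=\|g_1-g_0\|_{g_0}$ and $\|H\|_{HS}=\|h\|_{g_0}$ yields the claimed estimate.

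There is essentially no hard step here: the only substantive point is to verify that the norm $\|\cdot\|_{g_0}$ on symmetric bilinear forms induced by $g_0$ coincides, in a $g_0$-orthonormal basis, with the Hilbert--Schmidt norm of the representing matrix, so that the inequality $\|A\|_{op}\le\|A\|_{HS}$ can legitimately be applied to convert the hypothesis into a bound on $\|(I+A)^{-1}\|_{op}$. The rest is bookkeeping.
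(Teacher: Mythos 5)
Your proof is correct, but it takes a genuinely different route from the paper's. The paper chooses a basis that is simultaneously $g_0$-orthonormal and $g_1$-diagonalizing, reduces everything to the scalar computation $(1+\eta_j)^{-1}-1$ for the eigenvalues $\kappa_j=1+\eta_j$ of $g_1$, and concludes with a hands-on Cauchy--Schwarz estimate on the resulting sum. You instead work in an arbitrary $g_0$-orthonormal basis, express the difference of traces via the matrix identity $(I+A)^{-1}-I=-(I+A)^{-1}A$, and then chain together three standard operator-theoretic facts: the Cauchy--Schwarz inequality for the Hilbert--Schmidt pairing, the mixed estimate $\|XY\|_{HS}\le\|X\|_{op}\|Y\|_{HS}$, and the Neumann-series bound $\|(I+A)^{-1}\|_{op}\le(1-\|A\|_{op})^{-1}$ using $\|A\|_{op}\le\|A\|_{HS}\le\frac12$. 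Your approach avoids the simultaneous diagonalization entirely and is a bit more modular, at the price of invoking several textbook norm inequalities; the paper's is more elementary and self-contained, essentially performing those estimates by hand on the eigenvalues. Both are complete and give the same constant $2$.
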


\begin{proof}
Let $e_1,\ldots,e_n$ be a basis of $V$, orthonormal for $g_0$ and diagonalizing for $g_1$.
Hence $g_1(e_j,e_k) = \kappa_j \delta_{jk}$.
Writing $\kappa_j=1+\eta_j$ we find
$$
\frac14
\ge
\|g_1-g_0\|_{g_0}^2
=
\sum_{j=1}^n \eta_j^2
$$
and thus $|\eta_j|\le\frac12$ for each $j$.
Observe that for $|\eta|<1$ we have
$$
|1 - (1+\eta)| = |\eta| \le \frac{1+\eta}{1-|\eta|}|\eta| 
$$
and therefore
$$
|(1+\eta)^{-1} - 1| \le (1-|\eta|)^{-1} |\eta| .
$$
We compute, writing $h(e_j,e_k)=h_{jk}$ and using the Cauchy-Schwarz inequality,
\begin{align*}
|\tr_{g_1} (h) - \tr_{g_0}(h)|^2
&=
\Big|\sum_{j=1}^n (\kappa_j^{-1} h_{jj} - h_{jj})\Big|^2 \\
&=
\Big|\sum_{j=1}^n ((1+\eta_j)^{-1} -1)h_{jj}\Big|^2 \\
&\le
\Big(\sum_{j=1}^n ((1+\eta_j)^{-1} -1)^2\Big) \cdot \sum_{j=1}^n h_{jj}^2 \\
&\le
\Big(\sum_{j=1}^n ((1-|\eta_j|)^{-2} \eta_j^2\Big) \cdot \sum_{j=1}^n h_{jj}^2 \\
&\le
4\Big(\sum_{j=1}^n \eta_j^2\Big) \cdot \sum_{j=1}^n h_{jj}^2 \\
&\le
4 \| g_1 - g _0\|_{g_0}^2 \cdot \| h\|_{g_0}^2.
\qedhere
\end{align*}
\end{proof}

\begin{lemma} \label{chi} 
There exists a constant $c_0>0$ such that for each $0<\delta \le\frac12$ there exists a smooth function $\chi_\delta  \colon  [0, \infty)   \to \R$ with 
\begin{enumerate}[\myicon] 
   \item $\chi_\delta(t)=t$ for $t$ near $0$, $\chi_\delta(t) = 0  $ for $t \geq \sqrt{\delta}$ and $0\leq \chi_\delta(t)\le \frac{\delta}{2} $ for all $t$,
   \item $|\dot\chi_\delta(t)| \le c_0$ for all $t$,
   \item $ -  \frac{2}{\delta} \leq \ddot\chi_\delta(t) \leq 0$ for all $t\in[0,\delta]$ and $|\ddot\chi_\delta(t)|\le c_0$ for all $t\in[\delta,\sqrt{\delta}]$.
\end{enumerate} 
\end{lemma}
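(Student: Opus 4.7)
I would construct $\chi_\delta$ explicitly as a concatenation of two rescaled copies of fixed smooth templates, glued at $t=\delta$ and $t=\sqrt{\delta}$. Concretely, the plan is to set
\[
\chi_\delta(t) = \begin{cases} \delta\, F(t/\delta), & t \in [0, \delta], \\ \tfrac{\delta}{2}\, G\!\left(\tfrac{t - \delta}{\sqrt{\delta} - \delta}\right), & t \in [\delta, \sqrt{\delta}], \\ 0, & t \geq \sqrt{\delta}, \end{cases}
\]
where $F\colon [0,1]\to [0,\tfrac12]$ is a fixed smooth concave template satisfying $F(s)=s$ for $s$ near $0$, $F(1)=\tfrac12$, $F'(1)=0$, and all higher derivatives of $F$ vanishing at $s=1$; and $G\colon [0,1]\to[0,1]$ is a fixed smooth step with $G(0)=1$, $G(1)=0$, and all derivatives of $G$ vanishing at both endpoints. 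These flat-endpoint conditions are imposed precisely so that the derivatives of the three pieces agree to all orders at the junctions $t=\delta$ and $t=\sqrt{\delta}$, making $\chi_\delta$ smooth on $[0,\infty)$.

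The required estimates then reduce to the chain rule. On $[0,\delta]$ one has $\dot\chi_\delta(t) = F'(t/\delta)$ and $\ddot\chi_\delta(t) = F''(t/\delta)/\delta$, so imposing $-2 \leq F''\leq 0$ yields $\ddot\chi_\delta(t) \in [-2/\delta, 0]$ and $|\dot\chi_\delta(t)| \leq \|F'\|_{C^0}$. On $[\delta,\sqrt{\delta}]$, setting $L_\delta := \sqrt{\delta}-\delta = \sqrt{\delta}(1-\sqrt{\delta})$, the derivatives scale as $\dot\chi_\delta = \tfrac{\delta}{2L_\delta}\,G'(\cdot)$ and $\ddot\chi_\delta = \tfrac{\delta}{2L_\delta^2}\,G''(\cdot)$; since $\sqrt{\delta}\leq 1/\sqrt{2}$ for $\delta\leq\tfrac12$, the factor $1-\sqrt{\delta}$ is bounded below by $1-1/\sqrt{2}>0$, so both prefactors $\delta/L_\delta$ and $\delta/L_\delta^2$ are bounded by universal constants. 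The pointwise bounds $0 \leq \chi_\delta \leq \delta/2$ and $\chi_\delta(t)=t$ near $0$ are immediate from the definition, and $c_0$ can be set to a universal combination of the $C^2$-norms of $F$ and $G$.

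The main obstacle is the existence of the template $F$ realizing simultaneously concavity, the linear behavior $F(s)=s$ near $0$, the quantitative bound $F''\geq -2$, and the flatness conditions $F(1)=\tfrac12$ together with $F^{(k)}(1)=0$ for all $k\geq 1$. I would construct $F$ by starting from the explicit concave parabola $s\mapsto s-s^2/2$, which already satisfies $F(0)=0$, $F(1)=\tfrac12$, $F'(0)=1$, $F'(1)=0$, and $F''\equiv -1$, and then modifying it on a small subinterval $[1-\eta,1]$ by smoothly interpolating to the constant function $\tfrac12$ via a standard plateau cut-off. Provided $\eta$ is sufficiently small, the modification keeps $F''$ inside $[-2,0]$, preserves concavity and monotonicity, and leaves the linear segment near $0$ untouched. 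An analogous but simpler argument produces $G$ from any standard smooth step function flat at both endpoints. Once $F$ and $G$ are fixed once and for all, the resulting $\chi_\delta$ satisfies all the properties listed in the lemma with a universal constant $c_0$.
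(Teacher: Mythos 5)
Your overall strategy is genuinely different from the paper's: you glue two rescaled templates at $t=\delta$ and $t=\sqrt{\delta}$ and rely on flat-endpoint conditions for smoothness, whereas the paper writes $\chi_\delta = \phi_\delta + \psi_\delta$ as a \emph{sum} of two functions scaled in $t/\delta$ and $t/\sqrt\delta$ respectively, so no gluing is needed --- $\psi_\delta$ is constant wherever $\phi_\delta$ is not constant, which makes the superposition automatically smooth and the estimates immediate. Your scaling analysis for the two pieces is correct: the factors $\delta/L_\delta$ and $\delta/L_\delta^2$ with $L_\delta=\sqrt{\delta}-\delta=\sqrt{\delta}(1-\sqrt{\delta})$ are indeed uniformly bounded for $\delta\le\frac12$, and the chain rule gives exactly the required $\ddot\chi_\delta\in[-2/\delta,0]$ on $[0,\delta]$ once $F''\in[-2,0]$.

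However, there is a genuine gap in the construction of the template $F$. You need $F(s)=s$ for $s$ near $0$ (so that $\chi_\delta(t)=\delta F(t/\delta)=t$ near $0$, which is used crucially in Proposition~\ref{schwer} to read off the second fundamental form). The parabola $s\mapsto s-s^2/2$ does \emph{not} have a linear segment near $0$: it satisfies $F'(0)=1$ but $F(s)<s$ for every $s>0$. Hence modifying only on $[1-\eta,1]$ and ``leaving the linear segment near $0$ untouched'' does not produce the required template --- there is nothing to leave untouched. You would need to also replace $F$ by the identity on some $[0,a]$ and re-engineer the interior; this is possible (the constraints $F(a)=a$, $F'(a)=1$, $F(b)=\tfrac12$, $F'(b)=0$, $F''\in[-2,0]$ are compatible for $a\le\tfrac14$, $b\ge\tfrac34$), but it is not what you wrote. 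A secondary issue: taking ``$\eta$ sufficiently small'' does not help keep $F''\ge -2$. If you modify $F$ on $[1-\eta,1]$ by a plateau cut-off, the correction to $F''$ involves terms like $\eta_{\mathrm{cut}}''\cdot(F-\tfrac12)$ and $\eta_{\mathrm{cut}}'\cdot F'$ which are of size $O(1)$ independently of $\eta$ (the $1/\eta^2$ and $1/\eta$ blow-ups of the cut-off derivatives are exactly compensated by the $O(\eta^2)$ and $O(\eta)$ smallness of $F-\tfrac12$ and $F'$); the constant you gain is governed by the cut-off shape, not by $\eta$, so the bound $F''\ge -2$ is not automatic. The cleanest fix, closer in spirit to the paper, is to start from a $C^2$ base function that is \emph{already} linear near $0$ and constant near $1$ (as the paper's $\tilde\phi$ is) and take a concave smooth $C^2$-approximation.
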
 

\begin{proof}
The $C^2$-function $\tilde\phi\colon [0,\infty) \to \R$ defined by
\[
\tilde\phi(t) = 
\begin{cases} 
t-\frac12  &\textrm{for } 0 \leq t \leq \frac{1}{10}  , \\ 
\frac{1}{10240} \, (10 \, t + 7) (10 \, t - 9)^{3} &\textrm{for } \frac{1}{10} \leq t \leq \frac{9}{10} ,   \\ 
0  &\textrm{for }  t\geq \frac{9}{10} ,  
\end{cases} 
\]
satisfies $-\frac12\le\tilde\phi\le 0$ and $-\frac{15}{8}\le\ddot{\tilde\phi}\le0$.
In particular, $\tilde\phi$ is concave.
Now let $\phi_1$ be a smooth concave approximation of $\tilde\phi$ which coincides with $\tilde\phi$ near $0$ and on $[\frac{19}{20},\infty)$ and such that $\|\phi_1-\tilde\phi\|_{C^2([0,1])}\le\frac{1}{8}$.
Then $-\frac12\le\phi_1\le 0$ and $-2\le\ddot\phi_1\le0$.
Moreover, $\ddot\phi_1\le0$ implies $1=\dot{\phi_1}(0)\ge\dot{\phi_1}(t)\ge\dot{\phi_1}(1)=0$.

We put $\phi_\delta(t):=\delta\phi_1(t/\delta)$ and find
\begin{enumerate}[\myicon] 
\item $\phi_\delta(t)=t-\frac\delta2$ for $t$ near $0$;
\item $\phi_\delta(t)=0$ for $t\ge\frac{19}{20}\delta$;
\item $-\frac{\delta}2\le\phi_\delta\le 0$ everywhere;
\item $0\le\dot{\phi_\delta}\le1$ everywhere;
\item $-\frac{2}{\delta}\le\ddot\phi_\delta\le0$ everywhere.
\end{enumerate}
Next, let $\psi_1\colon [0,\infty)\to\R$ be a smooth function with $\psi_1(t)=\frac12$ for $t\in[0,\frac{19}{20}]$, $\psi_1(t)=0$ for $t\ge 1$, and $0\le\psi_1\le\frac12$ everywhere.
For $0<\delta<1$ put $\psi_\delta(t) := \delta\psi_1(t/\sqrt{\delta})$.
Then
\begin{enumerate}[\myicon] 
\item $\psi_\delta(t)=\frac\delta2$ for $t\in[0,\frac{19}{20}\sqrt{\delta}]$;
\item $\psi_\delta(t)=0$ for $t\ge\sqrt{\delta}$;
\item $0\le\psi_\delta\le\frac\delta2$ everywhere;
\item $|\dot\psi_\delta|\le\sqrt{\delta}\|\dot\psi_1\|_{C^0([0,1])}$ everywhere;
\item $|\ddot\psi_\delta|\le \|\ddot\psi_1\|_{C^0([0,1])}$ everywhere.
\end{enumerate}
Now $\chi_\delta:=\phi_\delta+\psi_\delta$ does the job.
Note in particular, that for $t\in [0,\delta]\subset [0,\frac{19}{20}\sqrt{\delta}]$ we have $\ddot\chi_\delta=\ddot\phi_\delta\in[-\frac2\delta,0]$.

The functions $\phi_1$, $\psi_1$, and $\chi_\delta$ are illustrated in Figure~\ref{fig:3}. 
\end{proof}

\begin{figure}[!ht]
\begin{annotate}
{\includegraphics[scale=0.6]{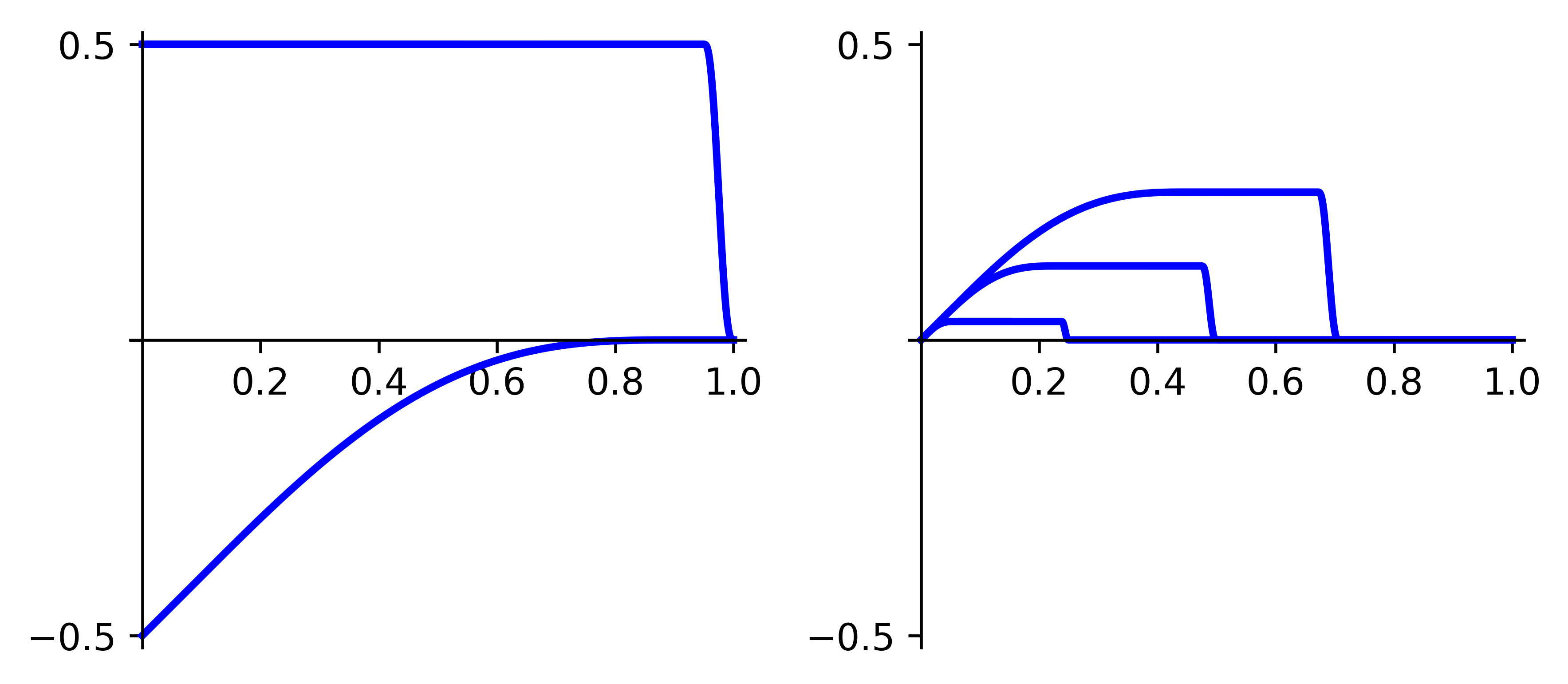}}{0.6}
\note{-3.5,2.4}{\scalebox{1.4}{$\psi_1$}}
\note{-3.5,-1.1}{\scalebox{1.4}{$\phi_1$}}
\note{4,1.8}{\scalebox{1}{$\delta=\nicefrac{1}{2}$}}
\note{3.5,1}{\scalebox{1}{$\delta=\nicefrac{1}{4}$}}
\note{2.8,0.4}{\scalebox{1}{$\delta=\nicefrac{1}{16}$}}
\end{annotate}

\caption{The functions $\phi_1$, $\psi_1$, and $\chi_\delta$}
\label{fig:3} 
\end{figure}

\begin{proposition} \label{schwer} 
Let $K$ be a compact Hausdorff space.
Let $g_0 \colon  K \to C^{\infty} (\dM ; T^*\dM  \otimes T^*\dM )$ be a continuous family of Riemannian metrics on $\dM $ and let $h,k \colon  K \to  C^{\infty} (\dM ; T^*\dM  \otimes T^*\dM )$ be continuous families of symmetric $(2,0)$-tensor fields satisfying $\tr_{g_0}(h) \geq \tr_{g_0}(k)$. 

Then there exists a constant $C_0 = C_0(g_0, h, k) > 0$ such that
\begin{enumerate}[\myicon]
\item
for every continuous family 
$$
g \colon  K \to \RR_{> \sigma} (M)
$$ 
of $C$-normal metrics of scalar curvature greater than $\sigma$ with $C\ge C_0$, $g(\xi)_0=g_0(\xi)$ and $\II_{g(\xi)}=h(\xi)$ for all $\xi\in K$ and
\item
for each neighborhood $\U$ of $\dM$
\end{enumerate} 
there exists a continuous map 
$$
f \colon  K \times [0,1] \to \RR_{> \sigma} (M)
$$ 
so that the following holds for all $\xi \in K$ and $s \in [0,1]$: 
\begin{enumerate}[(a)] 
\item\label{eins}  
$f(\xi,0) = g(\xi)$; 
\item 
$f(\xi,s)$ is $C$-normal;
\item
$f(\xi,s)|_{\dM }=g(\xi)|_{\dM }$, in particular $f(\xi,s)_0 = g_0(\xi)$;
\item\label{IIdeform} 
$\II_{f(\xi,s)} = (1-s)\II_{g(\xi)}+sk(\xi)$;
\item \label{nearbdy}
$f(\xi,s)=g(\xi)$ on $M\setminus \U$.
\end{enumerate} 
\end{proposition}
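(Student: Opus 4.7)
The plan is to construct $f$ as an explicit cut-off deformation localized in a collar of $\partial M$. Writing $t$ for the $g(\xi)$-distance to $\partial M$, so that $g(\xi)=dt^2+g(\xi)_t$ in $g(\xi)$-normal coordinates, I set
\[
f(\xi,s):=g(\xi)+2s\,\chi_\delta(t)\cdot\bigl(h(\xi)-k(\xi)\bigr)
\]
inside the collar $\{t<\sqrt{\delta}\}$, and $f(\xi,s):=g(\xi)$ outside, with $\chi_\delta$ as in Lemma~\ref{chi}. The parameter $\delta>0$ will be chosen small, depending on the eventual $C_0$, on $\U$, and on uniform bounds of $g_0,h,k$ over $K$. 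Since $dt^2$ is unaffected by the perturbation, $t$ remains the distance function also for $f(\xi,s)$; because $\chi_\delta(t)=t$ for $t$ near $0$, the near-boundary expansion reads
\[
f(\xi,s)_t=g_0(\xi)-2t\bigl((1-s)h(\xi)+sk(\xi)\bigr)-Ct^2\,g_0(\xi),
\]
from which properties (a)--(e) follow immediately.

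The main obstacle is verifying $\scal_{f(\xi,s)}>\sigma$ inside the transition zone $0<t<\sqrt{\delta}$, where $\ddot\chi_\delta$ can be as large as $2/\delta$. Abbreviating $\tilde g_t:=f(\xi,s)_t$, formula~\eqref{scal} contains the term
\[
-\tr_{\tilde g_t}(\ddot{\tilde g}_t)=2C\cdot\tr_{\tilde g_t}\bigl(g_0(\xi)\bigr)-2s\,\ddot\chi_\delta(t)\,\tr_{\tilde g_t}\bigl(h(\xi)-k(\xi)\bigr).
\]
On $[0,\delta]$ Lemma~\ref{chi} gives $-2s\ddot\chi_\delta\ge0$, and the sign hypothesis $\tr_{g_0}(h)\ge\tr_{g_0}(k)$ combined with Lemma~\ref{comptr} implies $\tr_{\tilde g_t}(h-k)\ge -c\cdot\|\tilde g_t-g_0\|_{g_0}\ge -c'\bigl(\sqrt{\delta}+C\delta\bigr)$. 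Multiplying by $|{-2s\ddot\chi_\delta}|\le 4/\delta$ produces a contribution bounded \emph{independently of $\delta$ and of $C$}, provided one arranges $C\sqrt{\delta}\le 1$. This cancellation---the smallness of the trace error exactly balancing the blow-up of $\ddot\chi_\delta$---is the heart of the proof and quantifies the slogan that a mean-curvature-decreasing deformation costs only a bounded amount of scalar curvature. On $[\delta,\sqrt{\delta}]$ the estimate $|\ddot\chi_\delta|\le c_0$ makes this term trivially bounded.

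With the key estimate in hand, the remaining terms $\scal_{\tilde g_t}$, $3\tr(W_t^2)-\tr(W_t)^2$ and $2C\tr_{\tilde g_t}(g_0)$ are controlled by elementary algebra: since the leading $C$-contribution to $W_t$ behaves like $Ct\cdot\mathrm{id}$, the combination $3\tr(W_t^2)-\tr(W_t)^2$ contributes at most $O(C^2t^2)\le O(C^2\delta)$, which stays bounded once $C\sqrt{\delta}\le 1$. Collecting everything yields an inequality of the form $\scal_{f(\xi,s)}\ge 2C(n-1)-K_1$ throughout the collar, where $K_1$ depends only on $g_0,h,k,n,K,\sigma$. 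Choosing $C_0$ so that $2C_0(n-1)-K_1>\max_M\sigma$ then forces $\scal_{f(\xi,s)}>\sigma$ for every $C\ge C_0$ inside the collar, while outside the collar $f(\xi,s)=g(\xi)$ already satisfies $\scal>\sigma$. Finally, positive-definiteness of $\tilde g_t$ for $\delta$ small, the inclusion $\{t<\sqrt{\delta}\}\subset\U$ (obtained by taking $\sqrt{\delta}$ small enough), and continuity of the resulting map $K\times[0,1]\to\RR_{>\sigma}(M)$ all follow in a direct manner from the continuous dependence of the constituents on $(\xi,s)$.
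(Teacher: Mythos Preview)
Your construction and overall strategy coincide with the paper's proof: the same cut-off deformation via $\chi_\delta$, the same splitting of the collar into $[0,\delta]$ and $[\delta,\sqrt\delta]$, and the same mechanism by which the sign hypothesis $\tr_{g_0}(h-k)\ge0$ absorbs the dangerous $\ddot\chi_\delta$-term. However, the arithmetic at the heart of your argument is wrong as written.

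You bound $\|\tilde g_t-g_0\|_{g_0}\le c'(\sqrt\delta+C\delta)$ and then claim that multiplying by $|{-}2s\ddot\chi_\delta|\le 4/\delta$ yields something bounded once $C\sqrt\delta\le1$. But
\[
\frac{4}{\delta}\bigl(\sqrt\delta+C\delta\bigr)=\frac{4}{\sqrt\delta}+4C,
\]
and under $C\sqrt\delta\le1$ this is of order at least $C$, not $O(1)$. That would produce a negative contribution to $\scal$ of the same order as the gain $2C(n-1)$, and your final inequality $\scal\ge 2C(n-1)-K_1$ with $K_1$ independent of $C$ collapses.

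The repair is to use a finer bound on the interval $[0,\delta]$, which is where $|\ddot\chi_\delta|$ is large. There one has $t\le\delta$, not merely $t\le\sqrt\delta$, and hence (using $C\sqrt\delta\le1$ so that $Ct^2\le t\le\delta$, together with $|\chi_\delta|\le\delta/2$)
\[
\|\tilde g_t-g_0\|_{g_0}\lesssim t+\delta\lesssim\delta\qquad\text{on }[0,\delta].
\]
Now $(2/\delta)\cdot O(\delta)=O(1)$, which is the genuine cancellation you were aiming for. This is exactly what the paper does; compare the passage where the bound $\tfrac{2}{\delta}(2t+\delta)\le 6$ appears. On $[\delta,\sqrt\delta]$ your argument is fine since $|\ddot\chi_\delta|\le c_0$ there. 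A minor further point: the paper handles the Weingarten terms more cleanly by simply discarding $3\tr(W_t^2)\ge0$ and bounding $\tr(W_t)^2=\tr_{\tilde g_t}(\II_t)^2\lesssim 1$, rather than tracking the combination $3\tr(W_t^2)-\tr(W_t)^2$.
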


\begin{proof} 
Let $C \in \R$, $C > 0$, let $g \colon  K \to \RR_{> \sigma}(M)$ be a continuous family of $C$-normal metrics of scalar curvature greater than $\sigma$ such that
\begin{equation} \label{ass} 
   g(\xi) = dt^2 + g_0(\xi) -2t\cdot h(\xi) -   Ct^2 \cdot g_0(\xi) 
\end{equation} 
holds on $\U$ for all $\xi\in K$, after possibly shrinking $\U$.
Choose $\eps_0>0$ such that $U^{g(\xi)}_{\eps_0}\subset \U$ for $\xi\in K$.

We will use the notation $\lesssim$ to mean that the LHS is bounded by the RHS times a positive constant which only depends on the families $g_0$, $h$ and $k$ (and thus on $\dM $).
However, the constant is independent of $\delta$, $t$, $\xi$, $s$, $C$. 
Similarly, we say that a statement holds for sufficiently large $C$ if it holds for all $C$ larger than a constant which depends only on on the families $g_0$, $h$ and $k$.
Finally, we say that a statement holds for sufficiently small $\delta$ if it holds for all $\delta\in(0,\delta_0]$ where $\delta_0$ is a positive constant which depends only on $g_0$, $h$, and $C$.

Let $0<\delta<\min\{\frac12,\eps_0^2,C^{-2}\}$.
Take $\chi_\delta$ as in Lemma~\ref{chi} and consider 
\begin{gather}
f^{\delta} \colon  K \times [0,1] \to C^{\infty}(M;T^*M \otimes T^*M) ,\notag\\
f^{\delta}(\xi,s) = 
\begin{cases}
dt^2 + (1-Ct^2)\cdot g_0(\xi) -2t\cdot h(\xi) + 2s \chi_\delta(t) \cdot (h(\xi)-k(\xi))   & \mbox{ for }t\leq\sqrt{\delta} , \\
g(\xi) &\mbox{ for }t\geq\sqrt{\delta} . 
\end{cases}
\label{def:deform}
\end{gather}
This defines a family of smooth $(2,0)$-tensor fields on $M$.
For $\delta$ sufficiently small, the $f^\delta(\xi,s)$ are positive definite and hence Riemannian metrics on $M$.
On $M\setminus U^{f^\delta(\xi,s)}_{\sqrt{\delta}}$ these metrics have scalar curvature greater than $\sigma$ because they coincide with $g(\xi)$.
Since $\chi_\delta(t) = t$ for small $t$ we find near $\dM $:
\begin{align*}
f^{\delta}(\xi,s) 
&=
dt^2 + (1-Ct^2)\cdot g_0(\xi) -2t \big((1-s) \cdot h(\xi) + s\cdot k(\xi)\big) .
\end{align*}
Hence $f^{\delta}(\xi,s)|_{\dM }=g(\xi)|_{\dM }$ and $\II_{f^{\delta}(\xi,s)} = (1-s)\II_{g(\xi)} + sk(\xi)$.
Provided every $f^\delta(\xi,s)$ has scalar curvature greater than $\sigma$, we have verified properties \ref{eins} -- \ref{nearbdy} for $f=f^\delta$.
It remains to show that $f^\delta(\xi,s)$ has scalar curvature greater than $\sigma$ if $\delta$ is sufficiently small.
We only need to check this on $U^{f^\delta(\xi,s)}_{\sqrt{\delta}}$.

Fix $\xi \in K$ and $s \in [0,1]$. 
In the following we write  $\gamma := f^{\delta}(\xi,s)$ for simplicity and use the splitting $\gamma = dt^2 + \gamma_t$ on $U^\gamma_\eps$.
Thus 
\begin{equation}
\gamma_t = (1-Ct^2)\cdot g_0(\xi) -2t\cdot h(\xi) + 2s \chi_\delta(t) \cdot (h(\xi)-k(\xi)) .
\label{deform}
\end{equation}
In particular, we have $\gamma_0=g(\xi)_0=g_0(\xi)$.

Denote the second fundamental forms and the Weingarten maps of the level sets of the distance function $t$ from the boundary (w.r.t.\ $\gamma$) by $\II_t$ and $W_t$, respectively.
Since $W_t$ is a symmetric endomorphism field we have $\tr(W_t^2) \geq 0$ and hence, by \eqref{scal},
\begin{align} 
\scal_{\gamma} 
&= 
\scal_{\gamma_t} + 3 \, \tr(W_t^2) - \tr(W_t)^2 - \tr_{\gamma_t} ( \ddot \gamma_t)  \notag\\
&\geq \scal_{\gamma_t} - \tr_{\gamma_t}(\II_t)^2 - \tr_{\gamma_t} ( \ddot \gamma_t)  . 
\label{scalfin} 
\end{align} 
Using \eqref{eq:II} and \eqref{deform} we compute
\begin{align} 
 \label{secondfund}   \II_t & =   h(\xi) - s  \dot\chi_\delta (t) \cdot (h(\xi)-k(\xi)) + Ct\cdot  g_0(\xi) , \\
  \label{secondder}  \ddot \gamma_t & = 2 s \ddot \chi_\delta(t) \cdot (h(\xi)-k(\xi))  - 2C\cdot g_0(\xi) . 
\end{align} 
Now observe
\begin{equation}
\gamma_t - \gamma_0 = -Ct^2\cdot \gamma_0 -2t\cdot h(\xi) + 2s \chi_\delta(t) \cdot (h(\xi)-k(\xi))  .
\label{eq:gtg0}
\end{equation}
Using $\delta<C^{-2}$, $0\le t\le\sqrt{\delta}$, $0\le s\le 1$, and $|\chi_\delta|\le\tfrac{\delta}{2}$ we get for the coefficient functions
\begin{align}
\big|-Ct^2\big|
\le
C\sqrt{\delta}t
&\le 
t
\le
\sqrt{\delta}, 
\label{eq:coeff1}\\
\big|2s \chi_\delta(t)\big|
&\le
\delta .
\label{eq:coeff2}
\end{align}
In particular, with respect to the $C^2$-norm induced by $\gamma_0$, 
\begin{equation}
\|\gamma_t - \gamma_0\|_{C^2}
\lesssim
t+t+\delta
\le
3\sqrt{\delta} .
\label{eq:gtg02}
\end{equation}
Thus the $\gamma_t$ lie in a uniformly $C^2$-small neighborhood of the compact set of metrics $\{g_0(\xi)\mid \xi\in K\}$.
Hence their scalar curvatures are uniformly (in $t$, $\xi$, $s$, and $C$) bounded.
In particular, 
\begin{equation}
\scal_{\gamma_t} \gtrsim -1 .
\label{eq:scalt_lowerbound}
\end{equation}
Lemma~\ref{comptr} and \eqref{eq:gtg02} imply for sufficiently small $\delta$:
\begin{align}
| \tr_{\gamma_t} ( \II_t) | 
&\le
| \tr_{\gamma_0} ( \II_t) | + | \tr_{\gamma_t} ( \II_t) - \tr_{\gamma_0} ( \II_t) |  \notag\\
&\lesssim
| \tr_{\gamma_0} ( \II_t) | + \|\gamma_t - \gamma_0\|_{\gamma_0} \cdot \|\II_t\|_{\gamma_0}  \notag\\
&\lesssim
| \tr_{\gamma_0} ( \II_t) | + \sqrt{\delta}\cdot \|\II_t\|_{\gamma_0}  .
\label{eq:trtIIt}
\end{align}
For the second summand we find, using \eqref{secondfund} and $|\dot\chi_\delta|\le c_0$,
\begin{align}
\|\II_t\|_{\gamma_0} 
&=
\big\| h(\xi) - s  \dot\chi_\delta (t)\cdot (h(\xi)-k(\xi)) + Ct\cdot  g_0(\xi)\big\|_{\gamma_0} \notag\\
&\le
\big\| h(\xi) \big\|_{\gamma_0}  + \big\| s  \dot\chi_\delta (t)\cdot (h(\xi)-k(\xi))\big\|_{\gamma_0} + \|Ct\cdot  g_0(\xi)\big\|_{\gamma_0} \notag\\
&\lesssim
1 + 1 + C\sqrt{\delta} \notag\\
&\le
3 .
\label{eq:IIt}
\end{align}
For the first term in \eqref{eq:trtIIt} we now find 
\begin{align}
| \tr_{\gamma_0} ( \II_t) | 
&\le 
\sqrt{n-1} \|\II_t\|_{\gamma_0} 
\lesssim 
1.
\label{eq:trIIt}
\end{align}
Inserting \eqref{eq:IIt} and \eqref{eq:trIIt} into \eqref{eq:trtIIt} yields
\begin{equation}
| \tr_{\gamma_t} ( \II_t) | \lesssim 1 .
\label{eq:trIIt_bound}
\end{equation}
In order to control the third term of \eqref{scalfin} we compute, using \eqref{secondder},
\begin{align}
- \tr_{\gamma_t} ( \ddot \gamma_t)
&=
\tr_{\gamma_t}\big(-2s \ddot \chi_\delta(t) \cdot (h(\xi)-k(\xi))  + 2C\cdot g_0(\xi)\big) \notag\\
&=
-2s \ddot \chi_\delta(t) \cdot \tr_{\gamma_t}(h(\xi)-k(\xi)) + 2C\cdot \tr_{\gamma_t}(g_0(\xi)) . 
\label{eq:trgtgtdd}
\end{align}
For the first summand in \eqref{eq:trgtgtdd} we recall that $\tr_{\gamma_0}(h(\xi)-k(\xi)) \geq 0$ and that for $t\in[0,\delta]$ we have $-\frac{2}{\delta} \leq \ddot\chi_\delta(t) \leq 0$.
Hence, by Lemma~\ref{comptr} and \eqref{eq:gtg02},
\begin{align*}
\ddot \chi_\delta(t) \cdot \tr_{\gamma_t}(h(\xi)-k(\xi))
&\le
\ddot \chi_\delta(t) \cdot \tr_{\gamma_t}(h(\xi)-k(\xi)) - \ddot \chi_\delta(t) \cdot \tr_{\gamma_0}(h(\xi)-k(\xi)) \notag\\
&\le
|\ddot \chi_\delta(t)| \cdot |\tr_{\gamma_t}(h(\xi)-k(\xi)) - \tr_{\gamma_0}(h(\xi)-k(\xi))| \notag\\
&\lesssim
|\ddot \chi_\delta(t) | \|\gamma_t-\gamma_0\|_{\gamma_0} \|h(\xi)-k(\xi)\|_{\gamma_0} \notag\\
&\lesssim
\tfrac2\delta (2t+\delta) \notag\\
&\le
6 .
\end{align*}
For $t\in[\delta,\sqrt{\delta}]$ we find
\begin{align*}
\big|\ddot\chi_\delta(t) \cdot \tr_{\gamma_t}(h(\xi)-k(\xi))\big|
&\lesssim
\big|\tr_{\gamma_t}(h(\xi)-k(\xi))\big| \notag\\
&\lesssim
|\tr_{\gamma_0}(h(\xi)-k(\xi))| + \|\gamma_t-\gamma_0\|_{\gamma_0}\|h(\xi)-k(\xi)\|_{\gamma_0} \notag\\
&\lesssim
1 + \sqrt{\delta} .
\end{align*}
Thus we have for all $t\in[0,\sqrt{\delta}]$:
\begin{equation}
\ddot \chi_\delta(t) \cdot \tr_{\gamma_t}(h(\xi)-k(\xi))
\lesssim
1 .
\label{eq:Term1}
\end{equation}
For the second summand in \eqref{eq:trgtgtdd} we obtain
\begin{align*}
|\tr_{\gamma_t}(g_0(\xi))-n+1|
&=
|\tr_{\gamma_t}(g_0(\xi))-\tr_{g_0(\xi)}(g_0(\xi))| \\
&\lesssim
\|\gamma_t - g_0(\xi)\|_{g_0(\xi)} \|g_0(\xi)\|_{g_0(\xi)}\\
&\lesssim
\sqrt{\delta} .
\end{align*}
Thus
\[
|\tr_{\gamma_t}(g_0(\xi))-n+1| \le \tfrac12
\]
for $\delta$ sufficiently small and therefore
\begin{align}
2C\cdot \tr_{\gamma_t}(g_0(\xi))
&\ge
2(n-\tfrac32)C .
\label{eq:Term3}
\end{align}
Inserting \eqref{eq:Term1} and \eqref{eq:Term3} into \eqref{eq:trgtgtdd} yields
\begin{equation}
- \tr_{\gamma_t} ( \ddot \gamma_t)
\gtrsim
C
\label{eq:III_bound}
\end{equation}
for $\delta$ sufficiently small.
Finally, inserting \eqref{eq:scalt_lowerbound}, \eqref{eq:trIIt_bound}, and \eqref{eq:III_bound} into \eqref{scalfin} we find
$$
\scal_{\gamma}
\gtrsim
C
$$
for sufficiently large $C$.
This concludes the proof of the proposition.
\end{proof} 

Now we can prove the main deformation theorem of this section.
For its precise formulation we need two auxiliary functions.
Define $S_1,S_2\colon [0,1]\to[0,1]$ (see Figure~\ref{fig:4}) by 
$$
S_1(t) =
\begin{cases}
1 & \text{ for } 0\le t\le\frac12, \\
2(1-t) & \text{ for } \frac12\le t \le 1,
\end{cases}
$$
and 
$$
S_2(t) =
\begin{cases}
1-2t & \text{ for } 0\le t\le\frac12, \\
0 & \text{ for } \frac12\le t \le 1.
\end{cases}
$$

\begin{figure}[!ht]
\begin{annotate}
{\includegraphics[scale=0.6]{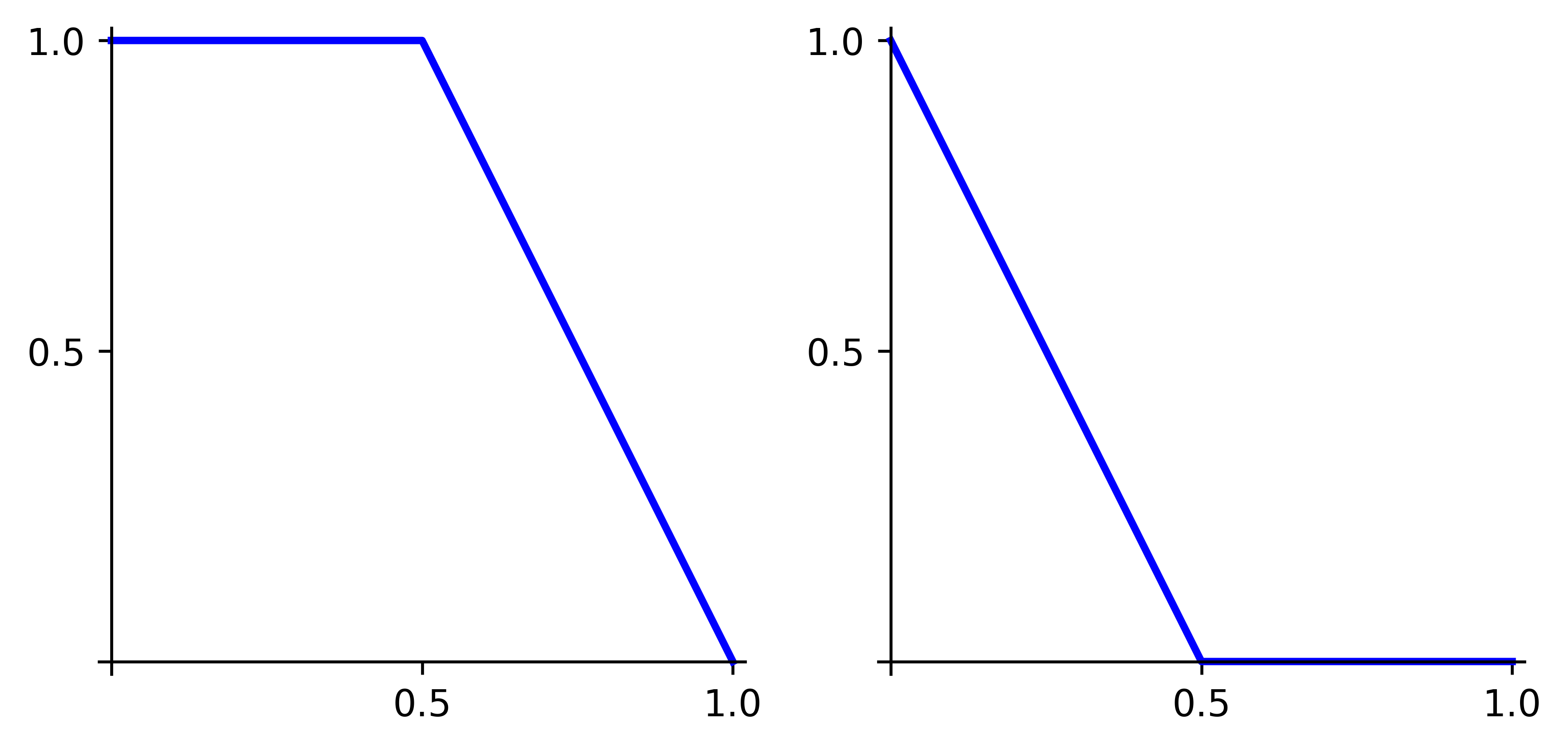}}{0.6}
\note{-2.2,2}{\scalebox{1.4}{$S_1$}}
\note{2.3,2}{\scalebox{1.4}{$S_2$}}
\end{annotate}

\caption{The functions $S_1$ and $S_2$}
\label{fig:4} 
\end{figure}

\begin{theorem}\label{master}
Let $K$ be a compact Hausdorff space and let
$$
g \colon  K \to \RR_{> \sigma} (M)
$$ 
be continuous.
Let $k \colon  K \to  C^{\infty} (\dM ; T^*\dM  \otimes T^*\dM )$ be a continuous family of symmetric $(2,0)$-tensor fields satisfying $\frac{1}{n-1}\tr_{g_0}(k(\xi)) \le H_{g(\xi)}$ for all $\xi\in K$.

Then there exists a constant $C_0>0$ such that for each $C\ge C_0$ and for each neighborhood $\U$ of $\dM$ there is a continuous map 
$$
f \colon  K \times [0,1] \to \RR_{> \sigma} (M)
$$ 
so that the following holds for all $\xi \in K$ and $s \in [0,1]$: 
\begin{enumerate}[(a)] 
   \item  $f(\xi,0) = g(\xi)$; 
   \item  $f(\xi,1)$ is $C$-normal;
   \item  \label{Randmetrikbleibtfest} $f(\xi,s)|_{\dM }=g(\xi)|_{\dM }$, in particular $f(\xi,s)_0 = g(\xi)_0$;
   \item  \label{zweitfund} $\II_{f(\xi,s)}=S_1(s)\II_{g(\xi)} + (1-S_1(s))k(\xi)$, in particular, $\II_{f(\xi,1)}=k(\xi)$;
   \item  \label{Cnormal} if $g(\xi)$ is $\widetilde C$-normal then $f(\xi,s)$ is $C_s$-normal for $C_s=S_2(s)\widetilde C + (1-S_2(s))C$;
   \item \label{secondderivative}  $\ddot f(\xi,s)_0 = S_2(s)\ddot g(\xi)_0 - 2 ( 1-S_2(s)) C g(\xi)_0$; 
   \item  \label{higherderivatives} for $\ell\ge3$ we have $f(\xi,s)^{(\ell)}_0 = S_2(s)\cdot g(\xi)^{(\ell)}_0$;
   \item  $f(\xi,s)=g(\xi)$ on $M\setminus \U$.
\end{enumerate}  
\end{theorem}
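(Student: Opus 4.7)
The plan is to obtain $f$ as the concatenation of the two deformations supplied by Propositions~\ref{step1} and \ref{schwer}, reparametrized so that the first half corresponds to $s\in[0,\tfrac12]$ and the second half to $s\in[\tfrac12,1]$. The choice of $S_1$ and $S_2$ is designed precisely for this: $S_1$ is constantly $1$ on $[0,\tfrac12]$ (so $\II$ is unchanged during the first half) and linearly interpolates from $1$ to $0$ on $[\tfrac12,1]$ (so $\II$ interpolates from $\II_{g(\xi)}$ to $k(\xi)$ there); dually, $S_2$ linearly interpolates from $1$ to $0$ on $[0,\tfrac12]$ (so the $C$-normality constant runs from $\widetilde C$ to $C$) and is constantly $0$ on $[\tfrac12,1]$ (so $C$-normality is preserved).

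First, I apply Proposition~\ref{step1} to the given family $g$, obtaining a constant $C_0^{(1)}$ and, for any $C\ge C_0^{(1)}$ and any neighborhood $\U$ of $\dM$, a continuous deformation $f_1\colon K\times[0,1]\to\RR_{>\sigma}(M)$ with $f_1(\xi,0)=g(\xi)$, $f_1(\xi,1)$ $C$-normal, $f_1(\xi,s)|_{\dM}=g(\xi)|_{\dM}$, $\II_{f_1(\xi,s)}=\II_{g(\xi)}$, the interpolation of $C$-normality constants, the formula for $\ddot f_1(\xi,s)_0$, and higher $t$-derivatives scaled by $(1-s)$. Next I note that the family $f_1(\cdot,1)$ is a continuous family of $C$-normal metrics with $f_1(\xi,1)_0=g(\xi)_0$ and $\II_{f_1(\xi,1)}=\II_{g(\xi)}$. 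The hypothesis $\tfrac{1}{n-1}\tr_{g(\xi)_0}(k(\xi))\le H_{g(\xi)}=\tfrac{1}{n-1}\tr_{g(\xi)_0}(\II_{g(\xi)})$ exactly says that $h:=\II_{g(\cdot)}$ and $k$ satisfy the trace inequality required by Proposition~\ref{schwer}. That proposition then supplies a constant $C_0^{(2)}$ depending only on $g_0$, $h$, $k$, and for $C\ge C_0^{(2)}$ a continuous deformation $f_2\colon K\times[0,1]\to\RR_{>\sigma}(M)$ with $f_2(\xi,0)=f_1(\xi,1)$, $f_2(\xi,s)$ $C$-normal, boundary metric unchanged, and $\II_{f_2(\xi,s)}=(1-s)\II_{g(\xi)}+sk(\xi)$. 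Both deformations can be arranged to be constant outside $\U$.

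Setting $C_0:=\max\{C_0^{(1)},C_0^{(2)}\}$ and, for $C\ge C_0$, defining
\[
f(\xi,s):=\begin{cases} f_1(\xi,2s) & \text{if } 0\le s\le\tfrac12,\\ f_2(\xi,2s-1) & \text{if } \tfrac12\le s\le 1,\end{cases}
\]
yields a continuous map $K\times[0,1]\to\RR_{>\sigma}(M)$ since $f_1(\xi,1)=f_2(\xi,0)$. Properties (a), (c) and (viii) follow at once. For (iv), on the first half $\II_{f(\xi,s)}=\II_{g(\xi)}=S_1(s)\II_{g(\xi)}+(1-S_1(s))k(\xi)$ because $S_1\equiv 1$ there, while on the second half Proposition~\ref{schwer} gives $\II_{f(\xi,s)}=(2-2s)\II_{g(\xi)}+(2s-1)k(\xi)=S_1(s)\II_{g(\xi)}+(1-S_1(s))k(\xi)$. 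Property (b) is the case $s=1$ of (ii). The $C$-normality claim (v), together with (vi) and (vii), follows on the first half directly from Proposition~\ref{step1}\,(\ref{cnorm})--(\ref{higherdiffs}) with $s$ replaced by $2s$ (noting $1-2s=S_2(s)$ there), and on the second half from the fact that $f_2(\xi,s)$ is $C$-normal (so $\ddot{f}_2(\xi,s)_0=-2Cg(\xi)_0$ and all higher $t$-derivatives vanish along $\dM$), combined with $S_2\equiv 0$ on $[\tfrac12,1]$.

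The only subtlety is that the constant $C_0^{(2)}$ from Proposition~\ref{schwer} is computed from $g_0$, $h=\II_{g(\cdot)}$ and $k$, which are fixed data extracted from $g$ and $k$ and therefore do not depend on the choice of $C\ge C_0^{(1)}$ entering the first step; this is what permits choosing a single threshold $C_0$ that makes both deformations available simultaneously. I expect no further obstacle: all the delicate analytic work has been absorbed into the two propositions, and the remaining task is the bookkeeping of properties under the reparametrization by $2s$ and $2s-1$.
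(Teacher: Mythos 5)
Your proof is correct and follows essentially the same route as the paper: apply Proposition~\ref{step1} to make $g$ $C$-normal while preserving $\II$, then apply Proposition~\ref{schwer} to interpolate $\II$ towards $k$, concatenate, and observe that the threshold from Proposition~\ref{schwer} depends only on $g_0$, $h=\II_{g(\cdot)}$, $k$ and so is available independently of the choice of $C$. The only difference is cosmetic: the paper starts from the $C_0$ of Proposition~\ref{schwer} and enlarges it if needed for Proposition~\ref{step1}, whereas you take the maximum of the two thresholds — the same thing.
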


\begin{proof}
Let $C_0$ be as in Proposition~\ref{schwer} for the families $g_0(\xi) := g(\xi)_0$, $h(\xi):=\II(\xi)$, and $k(\xi)$.
After possibly increasing $C_0$ we can apply Proposition~\ref{step1} with $C\ge C_0$ to obtain a continuous map $f_1 \colon  K \times [0,1] \to \RR_{> \sigma} (M)$ such that
\begin{enumerate}[(i)]  
   \item $f_1(\xi,0) = g(\xi)$; 
   \item $f_1(\xi,1)$ is $C$-normal; 
   \item \label{cneu} $f_1(\xi,s)|_{\dM } = g(\xi)|_{\dM }$ and $\II_{f_1(\xi,s)} = \II_{g(\xi)}$;  
   \item $\ddot f_1(\xi,s)_0 = (1-s)\ddot g(\xi)_0 -2 s  C g(\xi)_0$; 
   \item \label{higherdiffs2} $f_1(\xi,s)^{(\ell)}_0 = (1-s)g(\xi)^{(\ell)}_0$ for all $\ell\ge3$;
   \item $f_1(\xi,s)=g(\xi)$ on $M\setminus \U$;
\end{enumerate} 
holds  for all $\xi \in K$ and $s \in [0,1]$.
Because of \ref{cneu} we can apply Proposition~\ref{schwer} to the family $f_1(\xi,1)$ with the same constant $C_0$ and obtain another continuous map 
$$
f_2 \colon  K \times [0,1] \to \RR_{> \sigma} (M)
$$ 
so that
\begin{enumerate}[(i)] 
   \setcounter{enumi}{6}
   \item  $f_2(\xi,0) = f_1(\xi,1)$; 
   \item  $f_2(\xi,s)$ is $C$-normal;
   \item  $f_2(\xi,s)|_{\dM } = f_1(\xi,1)|_{\dM } = g(\xi)|_{\dM }$;
   \item  $\II_{f_2(\xi,s)}=(1-s)\II_{f_1(\xi,1)}+sk(\xi) = (1-s)\II_{g(\xi)}+sk(\xi)$;
   \item  $f_2(\xi,s)=f_1(\xi,1)=g(\xi)$ on $M\setminus \U$;
\end{enumerate} 
holds for all $\xi \in K$ and $s \in [0,1]$. 
Letting $f$ be the concatenation of $f_1$ and $f_2$ with respect to the parameter $s$ does the job.
\end{proof}

The following deformation result is much simpler than the previous one.
It may be understood as a genericity statement for strict inequalities for the mean curvature or the second fundamental form.

\begin{proposition}\label{einfach}
Let $K$ be a compact Hausdorff space and let
$$
g \colon  K \to \RR_{>\sigma} (M)
$$ 
be continuous.
Then for each neighborhood $\U$ of $\dM$ there exists a continuous map 
$$
f \colon  K \times [-1,1] \to \RR_{>\sigma}(M)
$$ 
so that the following holds for all $\xi \in K$ and $s \in [-1,1]$: 
\begin{enumerate}[(a)] 
   \item\label{Hposa} $f(\xi,0) = g(\xi)$; 
   \item\label{Randmetrikbleibtfest2} $f(\xi,s)|_{\dM }=g(\xi)|_{\dM }$, in particular $f(\xi,s)_0 = g(\xi)_0$;
   \item\label{Hposb} $\II_{f(\xi,s)}>\II_{g(\xi)}$ if $s>0$ and $\II_{f(\xi,s)}<\II_{g(\xi)}$ if $s<0$;
   \item\label{Hposc} $H_{f(\xi,s)}>H_{g(\xi)}$ if $s>0$ and $H_{f(\xi,s)}<H_{g(\xi)}$ if $s<0$;
   \item\label{Hposd} $f(\xi,s)=g(\xi)$ on $M\setminus\U$.
\end{enumerate} 
\end{proposition}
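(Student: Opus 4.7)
The plan is to add a small perturbation localized in a collar of $\dM$ that strictly changes the first normal derivative of the metric along $\dM$ while preserving the induced boundary metric. After shrinking $\U$, I may assume its closure is compact; by compactness of $K$ there exist $\eps_0,\mu>0$ such that the collar $U^{g(\xi)}_{\eps_0}\subset\U$ has compact closure uniformly in $\xi\in K$ and $\scal_{g(\xi)}-\sigma\ge\mu$ on this collar. Fix $\delta\in(0,\tfrac12)$ with $\sqrt{\delta}<\eps_0$, let $\chi_\delta$ be the cutoff from Lemma~\ref{chi}, and for a parameter $\eps>0$ to be chosen below, set
$$
f(\xi,s) := g(\xi) - 2\eps s\,\chi_\delta(t)\cdot \bar g(\xi)_0
$$
on the collar $U^{g(\xi)}_{\sqrt{\delta}}$ and $f(\xi,s):=g(\xi)$ outside. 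Here $t$ denotes the $g(\xi)$-distance to $\dM$ and $\bar g(\xi)_0$ is the extension of $g(\xi)_0$ to the collar via the normal-exponential parametrization as a $t$-independent symmetric $(2,0)$-tensor annihilating $\partial_t$. Since $\chi_\delta(\sqrt{\delta})=0$ the two branches match, and for $\eps$ small the perturbation is $C^0$-small, so $f(\xi,s)$ remains positive definite.

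Because $\chi_\delta(t)=t$ near $t=0$, the expansion $f(\xi,s)=dt^2+f(\xi,s)_t$ yields $f(\xi,s)_0=g(\xi)_0$ and $\dot f(\xi,s)_0=\dot g(\xi)_0-2\eps s\,g(\xi)_0$. From \eqref{eq:II} one reads off
$$
\II_{f(\xi,s)} = \II_{g(\xi)} + \eps s\,g(\xi)_0, \qquad H_{f(\xi,s)} = H_{g(\xi)} + \eps s.
$$
Since $g(\xi)_0$ is positive definite, this establishes (a)--(e) at once, provided one can verify $\scal_{f(\xi,s)}>\sigma$.

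The only real obstacle is the scalar curvature estimate. Applying the formula \eqref{scal} to both $f(\xi,s)$ and $g(\xi)$, the difference $\scal_{f(\xi,s)}-\scal_{g(\xi)}$ is continuous in the pointwise $C^2$-difference $f(\xi,s)_t-g(\xi)_t = -2\eps s\chi_\delta(t)g(\xi)_0$, which by Lemma~\ref{chi} has $C^2$-norm bounded (with constant depending on $\delta$ via $|\ddot\chi_\delta|\le 2/\delta$) by a multiple of $\eps$. Compactness of $K$ then yields a uniform estimate $|\scal_{f(\xi,s)}-\scal_{g(\xi)}|\le C(\delta)\eps$ on $K\times[-1,1]$. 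With $\delta$ now fixed, I finally choose $\eps$ so small that $C(\delta)\eps<\mu$, which ensures $\scal_{f(\xi,s)}>\sigma$. Continuity of $f$ in the weak $C^\infty$-topology is automatic from the continuous dependence of $g(\xi)$ and of the normal-exponential parametrization on $\xi$. Unlike Proposition~\ref{schwer}, no large-$C$ argument is needed since only strict inequalities, rather than prescribed target values of $\II$, are sought.
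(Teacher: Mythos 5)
Your proof is correct and follows essentially the same approach as the paper's: perturb $g(\xi)$ near the boundary by a term of the form (small parameter)${}\cdot s\cdot(\text{cutoff in }t)\cdot g(\xi)_0$, observe this changes $\II$ and $H$ by a definite sign without changing the boundary metric, and then use compactness of $K$ to make the perturbation small enough that $\scal>\sigma$ is preserved. The only cosmetic difference is that the paper uses a simple cutoff $\psi$ with $\psi(t)=t$ near $0$ and $\psi(t)=0$ for $t\ge\eps$ rather than the more elaborate $\chi_\delta$ from Lemma~\ref{chi} (whose derivative estimates are engineered for the harder Proposition~\ref{schwer} and are unnecessary here), and phrases the smallness step as uniform weak $C^\infty$-convergence as the parameter tends to $0$ instead of an explicit $C(\delta)\eps<\mu$ bound.
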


\begin{proof}
Choose $\eps>0$ such that $U^{g(\xi)}_\eps\subset\U$ and \eqref{eq:metricnearboundary} holds for $g(\xi)$ on $U^{g(\xi)}_\eps$ for all $\xi\in K$.
Let $\psi\colon [0,\infty)\to\R$ be a smooth function with $\psi(t)=t$ for $t$ near $0$ and $\psi(t)=0$ for $t\ge \eps$.
For $\delta>0$ we put
\begin{gather*}
f^{\delta} \colon  K \times [0,1] \to C^{\infty}(M;T^*M \otimes T^*M) , \notag\\
f^{\delta}(\xi,s) = 
\begin{cases}
g(\xi) - s\delta \psi(t) g(\xi)_0 & \mbox{ if }0\le t\le\eps, \\
g(\xi) & \mbox{ if }t\ge\eps.
\end{cases}
\end{gather*}
This defines a family of smooth $(2,0)$-tensor fields on $M$.
For $\delta\to 0$ we have $f^\delta(\xi,s)\to g(\xi)$ in the weak $C^\infty$-topology, uniformly in $\xi$.
In particular, for sufficiently small $\delta$, the $f^\delta(\xi,s)$ are positive definite and hence Riemannian metrics on $M$ and satisfy $\scal_{f^\delta(\xi,s)}>\sigma$.
Properties \ref{Hposa}, \ref{Randmetrikbleibtfest2}, and \ref{Hposd} are obvious.
For the second fundamental form we find
$$
\II_{f^\delta(\xi,s)} = \II_{g(\xi)} +\tfrac12s\delta g(\xi)_0
$$
and hence
\begin{align*}
H_{f^\delta(\xi,s)} 
&= 
\tr_{f^\delta(\xi,s)_0}(\II_{g(\xi)} +\tfrac12s\delta g(\xi)_0)  
= 
\tr_{g(\xi)_0}(\II_{g(\xi)} +\tfrac12s\delta g(\xi)_0) 
=
H_{g(\xi)} + \tfrac{n-1}2 s\delta .
\end{align*}
This proves properties \ref{Hposb} and \ref{Hposc}.
\end{proof}

\begin{remark} \label{small} 
The deformations constructed in Propositions~\ref{step1}, \ref{schwer}, Theorem~\ref{master}, and Proposition~\ref{einfach} are supported in arbitrarily small neighborhoods of the boundary.
They can be chosen ``small'' in another respect as well.

In the deformation in Proposition~\ref{step1} the metric and the second fundamental form remain constant.
Thus the deformation is arbitrarily $C^1$-small in a sufficiently small neighborhood of the boundary.
The conditions of scalar curvature being greater than $\sigma$ and being $\eps$-$C^1$-close to $g(\xi)$ define an open partial differential relation $R(\xi)$ of second order on the space of Riemannian metrics for each $\xi\in K$. 
This family of open partial differential relations depends continuously on $\xi$ in the sense that $\bigcup_{\xi\in K}\{\xi\}\times R(\xi)$ is open in $K\times J^2(T^*M\otimes T^*M)$ where $J^2$ denotes the second jet bundle.
The family version of the flexibility lemma still applies to such $\xi$-dependent partial differential relations as one can see from the proof of Addendum~3.4 in \cite{BH}.
Thus the whole deformation may be chosen arbitrarily $C^1$-small in the sense that given any $\eps>0$ we may arrange that $\|f(\xi,s)-g(\xi)\|_{C^1(M,g(\xi))} < \eps$ for all $\xi$ and $s$, in addition to the properties listed in Proposition~\ref{step1}.

In Proposition~\ref{schwer} the second fundamental form varies but the first fundamental form still remains constant.
From the explicit form of the deformation in \eqref{def:deform} and the bound $|\chi_\delta|\le\frac\delta2$ one sees that the deformation can still be chosen arbitrarily $C^0$-small.

Consequently, in Theorem~\ref{master} we may assume $\|f(\xi,s)-g(\xi)\|_{C^0(M,g(\xi))} < \eps$ for any given $\eps>0$, in addition to the properties listed there.

In the proof of Proposition~\ref{einfach} the deformation is of the form $f(\xi,s)=g(\xi)+s\delta \Psi(\xi)$ where $\Psi(\xi)$ is a smooth compactly supported $(2,0)$-tensor field depending continuously on $\xi$.
Thus, by choosing $\delta$ small, we can make $f(\xi,s)-g(\xi)$ arbitrarily small even in the strong $C^\infty$-topology.
\end{remark}

\begin{remark}
Applying the flexibility lemma with $\xi$-dependent partial differential relations, one may work with a continuous map $\sigma \colon  K \to C^0(M ;  \R)$ instead of a single $\sigma \colon  M \to \R$. 
Given a continous map $g \colon  K \to \RR(M)$ such  that $g(\xi) \in \RR_{> \sigma(\xi)} (M)$ for each $\xi \in K$, Theorem~\ref{master} can be refined so as  to produce a continuous map $f\colon  K \times [0,1] \to \RR(M)$ such that  $f(\xi,s) \in \RR_{> \sigma(\xi)}(M)$ with the properties stated in Theorem~\ref{master} for $\xi \in K$ and $s \in [0,1]$. 
\end{remark} 

\begin{remark} 
 The ``Bending Lemma'' on \cite{Gromov2018}*{p.~705}, which generalizes the well-known ``Doubling trick'' \cite{GL1980}*{Thm.~5.7}, implies that each weak $C^0$-neighborhood of a smooth metric on a manifold with compact boundary contains a smooth metric with the following properties: 
\begin{enumerate}[\myicon] 
\item its scalar curvature is bounded below by the scalar curvature of the given metric minus a prescribed constant, which may be chosen arbitrarily small; 
\item it coincides with the given metric when restricted to the  boundary and outside of a prescribed neighborhood of the boundary, which may be  chosen arbitrarily small;
\item its second fundamental form along the boundary has a prescribed form, provided that its trace is smaller  than the trace of the second fundamental form of the original metric.  \end{enumerate} 

A similar result was obtained in  \cite{BMN}*{Thm.~5}. 
In this respect, our Theorem~\ref{master} gives a deformation theoretic refinement of these approximation results.
\end{remark}

\section{Applications} \label{applications}

We will use Theorem~\ref{master} and Proposition~\ref{einfach} to compare the homotopy types of various spaces of Riemannian metrics.
Throughout this section, let $M$ be a (not necessarily compact) manifold with compact boundary $\dM$.
Let $\sigma\colon M\to\R$ be continuous, let $h_0\colon \dM\to\R$ be a smooth function and let $k$ be a smooth $(2,0)$-tensor field on $\dM$.

Recall that a continuous map $f \colon X \to Y$ between topological spaces is called a {\em weak homotopy equivalence}, if it induces a bijection $\pi_0(X) \cong \pi_0(Y)$   and isomorphisms $\pi_m(X, x) \cong \pi_m(Y, f(x))$ for all $x \in X$ and $m \geq 1$.

\subsection{\texorpdfstring{Subspaces of $\RR_{>\sigma}(M)$ given by geometric conditions on the boundary}{Subspaces of R{>sigma}(M) given by geometric conditions on the boundary}} 
We consider subspaces of $\RR_{>\sigma}(M)$ defined by various conditions on the relative geometry of the boundary.

Theorem~\ref{master} implies, roughly speaking, that inclusions of boundary conditions which are mean-curvature nonincreasing induce weak homotopy equivalences of the corresponding subspaces of metrics with lower scalar curvature bounds. 
Some boundary conditions of particular geometric relevance  to which this reasoning applies are listed in Table~\ref{tab:conditions}.
For any condition ``$*$'' on the boundary as listed in Table~\ref{tab:conditions} we write $\RR_{>\sigma}^*(M)$ for the set of all metrics in $\RR_{>\sigma}(M)$ satisfying $*$.
\begin{table}[!ht]
\begin{tabular}{|c|c|}
\hline
$*$ & condition on $g$ \\
\hline\hline
$H \geq h_0$ & mean curvature satisfies $H_g \geq h_0$ \\
\hline
$H = h_0$ & mean curvature satisfies $H_g = h_0$ \\
\hline
$\II \ge h_0$ & second fundamental form satisfies $\II_g \ge h_0\cdot g_0$ in the sense of bilinear forms \\
\hline
$\II = h_0$ & second fundamental form satisfies $\II_g = h_0\cdot g_0$ \\
\hline
$\II \ge k$ & second fundamental form satisfies $\II_g \ge k$ in the sense of bilinear forms \\
\hline
$\II = k$ & second fundamental form satisfies $\II_g =k$ \\
\hline
\end{tabular}
\medskip

\caption{\label{tab:conditions}Boundary conditions for $g$}
\end{table}

For example, $\RR_{>\sigma}^{H \geq h_0}(M)$ is the space of all smooth metrics $g$ on $M$ with $\scal_g>\sigma$ such that the mean curvature of the boundary satisfies $H_g\ge h_0$.
Furthermore, by $^\nor\RR_{>\sigma}^{*} (M)$ we denote the space of metrics in $\RR_{>\sigma}^{*} (M)$ which are, in addition, $C$-normal for some $C$ (see Definition~\ref{C-normal}). 

\begin{theorem} \label{main} 
Each of the inclusions in 
$$
\xymatrixrowsep{1pc}
\xymatrixcolsep{1pc}
\xymatrix{
     &         &        \RR_{>\sigma}^{H=h_0}(M)  \ar@{^{(}->}[dr]  &  \\
    ^\nor\RR_{>\sigma}^{\II = h_0}(M)  \ar@{^{(}->}[r] &
    \RR_{>\sigma}^{\II = h_0}(M) \ar@{^{(}->}[ur] \ar@{^{(}->}[dr]&  
         &
    \RR_{>\sigma}^{H \geq h_0}(M) \\
    & & \RR_{>\sigma}^{\II \ge h_0}(M) \ar@{^{(}->}[ur] & \\
       ^\nor\RR_{>\sigma}^{\II = k}(M)  \ar@{^{(}->}[r] &
    \RR_{>\sigma}^{\II = k}(M)  \ar@{^{(}->}[r]&  
    \RR_{>\sigma}^{\II \ge k}(M) & 
 }
$$
is a weak homotopy equivalence. 
\end{theorem}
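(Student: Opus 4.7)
The plan is to verify that each inclusion $i\colon A\hookrightarrow B$ in the diagram satisfies $\pi_m(B,A)=0$ for all $m\ge 0$. Concretely, given a map of pairs $\phi\colon(D^m,S^{m-1})\to(B,A)$ one needs to produce a homotopy $F\colon D^m\times[0,1]\to B$ with $F(\cdot,0)=\phi$, $F(D^m\times\{1\})\subset A$, and $F(S^{m-1}\times[0,1])\subset A$. All seven inclusions will be handled uniformly by applying Theorem~\ref{master} to the family $g(\xi):=\phi(\xi)$ parametrized by $K:=D^m$, with the target tensor chosen as $k(\xi):=h_0\cdot g(\xi)_0$ for the inclusions involving $h_0$, as the fixed tensor $k$ for those involving $k$, and as $\II_{\phi(\xi)}$ itself for the two inclusions involving ${}^\nor\RR$.

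The trace hypothesis of Theorem~\ref{master} follows in each case from the definition of $B$: for $k=h_0 g_0$ it reduces to $h_0\le H_{\phi(\xi)}$, which holds on each of $\RR_{>\sigma}^{H\ge h_0}$, $\RR_{>\sigma}^{H=h_0}$, $\RR_{>\sigma}^{\II\ge h_0}$, and $\RR_{>\sigma}^{\II=h_0}$; for $k=\II_\phi$ it is a tautology; and for the fixed tensor $k$, the pointwise inequality $\II_g\ge k$ on $\RR_{>\sigma}^{\II\ge k}$ passes to traces and yields the required estimate. Theorem~\ref{master} then produces the homotopy $F$, whose boundary second fundamental form is given by the convex combination
\[
\II_{F(\xi,s)}=S_1(s)\,\II_{\phi(\xi)}+(1-S_1(s))\,k(\xi).
\]

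The remaining verifications reduce to the observation that every condition in Table~\ref{tab:conditions} is convex in the variable $\II_g$, so the linear interpolation above preserves it. This shows that $F$ stays in $B$ throughout; in cases involving a bound on $H_g$, tracing gives $H_{F(\xi,s)}=S_1(s)H_{\phi(\xi)}+(1-S_1(s))h_0$, which preserves both $H_g=h_0$ and $H_g\ge h_0$. The endpoint $F(\cdot,1)$ lies in $A$ because $\II_{F(\xi,1)}=k(\xi)$, and for the two inclusions involving ${}^\nor\RR$ the additional $C$-normality from Theorem~\ref{master}(b) places $F(\xi,1)$ in the normality subspace. Finally, on $S^{m-1}$, where $\phi(\xi)$ already lies in $A$, the same convex-combination argument keeps $F(\xi,s)$ in $A$ for all $s$; for the normality inclusions one invokes Theorem~\ref{master}(e), according to which $\widetilde C$-normality of $\phi(\xi)$ propagates to $C_s$-normality of $F(\xi,s)$ throughout the homotopy.

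There is no technically difficult step once Theorem~\ref{master} is available, as the theorem is engineered precisely for this kind of application. The substantive point is the uniform observation that every boundary condition in Table~\ref{tab:conditions} is convex in the second fundamental form, so the canonical interpolation provided by Theorem~\ref{master} automatically respects all of them along the entire deformation; in particular no appeal to Proposition~\ref{einfach} is needed.
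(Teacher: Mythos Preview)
Your proof is correct and follows essentially the same approach as the paper: apply Theorem~\ref{master} to a disk family $g\colon D^m\to B$ with the natural choice of target tensor, and use the convex interpolation formula for $\II_{f(\xi,s)}$ together with properties (b)--(e) to verify that the homotopy lands where it should. The only organizational difference is that the paper does not treat each arrow separately; instead it shows directly that the composite inclusion ${}^\nor\RR_{>\sigma}^{\II=h_0}(M)\hookrightarrow\RR_{>\sigma}^{*}(M)$ is a weak homotopy equivalence for every $*\in\{\II=h_0,H=h_0,\II\ge h_0,H\ge h_0\}$ (and similarly for the $k$-row), and then infers the intermediate arrows by two-out-of-three. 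This saves a couple of case checks but is otherwise the same argument.
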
 

\begin{proof}
Let $*$ be any of the conditions in $\{ \II=h_0,H=h_0,\II \geq h_0,H \geq h_0 \}$.
We show that the inclusion $^\nor\RR_{>\sigma}^{\II=h_0}(M) \hookrightarrow \RR_{>\sigma}^*(M)$ is a weak homotopy equivalence.

Let ${m } \geq 0$ and $g\colon D^{m }\to \RR_{>\sigma}^*(M)$ be continuous such that $g(\partial D^{m })\subset {} ^\nor\RR_{>\sigma}^{\II=h_0}(M)$.
Here $D^{m }$ is the standard closed ${m }$-ball.
We apply Theorem~\ref{master} with $K=D^{m }$ and $k(\xi)=h_0 g(\xi)$ and obtain a continuous map $f\colon D^{m }\times[0,1]\to\RR_{>\sigma}^*(M)$ such that $f(\xi,s)\in {} ^\nor\RR_{>\sigma}^{\II=h_0}(M)$ for $(\xi,s)\in (\partial D^{m }  \times [0,1]) \cup (D^{m }  \times \{1\})$.

In summary, the pair $(\RR_{>\sigma}^*(M), ^\nor\RR_{>\sigma}^{\II=h_0}(M))$ is ${m }$-connected for all ${m } \geq 0$, compare  \cite{Hatcher}*{p.~346}. 
Hence the inclusion $^\nor\RR_{>\sigma}^{\II=h_0}(M) \hookrightarrow \RR_{>\sigma}^*(M)$ is a weak homotopy equivalence by the long exact sequence for homotopy groups. 

The same argument works for the inclusions $^\nor\RR_{>\sigma}^{\II=k}(M) \hookrightarrow \RR_{>\sigma}^*(M)$ for $*$ in $\{ \II=k,\II \geq k\}$. 
\end{proof} 

Sometimes one needs to compare boundary conditions defined by strict inequalities with their counterparts defined by nonstrict inequalities. 
The proof of the following theorem, which uses the obvious notation, proceeds in exactly the same way as the proof of Theorem~\ref{main}, using Proposition~\ref{einfach} instead of Theorem~\ref{master}.

\begin{theorem} \label{main_suppl} 
Each of the inclusions
\begin{align*} 
\RR_{> \sigma}^{H > h_0}(M) &\hookrightarrow \RR_{> \sigma}^{H \geq h_0}(M), &  \RR_{> \sigma}^{H < h_0}(M) &\hookrightarrow \RR_{> \sigma}^{H \leq h_0}(M),
\\
\RR_{> \sigma}^{\II  > h_0}(M) &\hookrightarrow \RR_{> \sigma}^{\II \geq h_0}(M), & \RR_{> \sigma}^{\II  < h_0}(M) &\hookrightarrow \RR_{> \sigma}^{\II \leq h_0}(M),
\\
\RR_{> \sigma}^{\II > k }(M) &\hookrightarrow \RR_{> \sigma}^{\II \geq k}(M), &  \RR_{> \sigma}^{\II < k}(M) &\hookrightarrow \RR_{> \sigma}^{\II \leq k}(M),
\end{align*} 
is a weak homotopy equivalence. 
\hfill\qed
\end{theorem}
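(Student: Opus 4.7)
The plan is to imitate the proof of Theorem~\ref{main} line by line, replacing the appeal to Theorem~\ref{master} by an appeal to Proposition~\ref{einfach}. For each inclusion $A \hookrightarrow B$ in the statement it suffices to show that the relative pair $(B, A)$ is $m$-connected for every $m \geq 0$; the long exact sequence of homotopy groups then yields the weak homotopy equivalence. Concretely, given a continuous $g \colon D^m \to B$ with $g(\partial D^m) \subset A$, I will produce a continuous homotopy $F \colon D^m \times [0,1] \to B$ satisfying $F(\cdot, 0) = g$, $F(D^m \times \{1\}) \subset A$, and $F(\partial D^m \times [0,1]) \subset A$.

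I will first handle the inclusion $\RR_{>\sigma}^{H > h_0}(M) \hookrightarrow \RR_{>\sigma}^{H \geq h_0}(M)$. Apply Proposition~\ref{einfach} with $K = D^m$ to the family $g$, and set $F := f|_{D^m \times [0,1]}$. The proposition guarantees that $F$ takes values in $\RR_{>\sigma}(M)$, and since the boundary metric is preserved throughout, the mean curvature satisfies $H_{F(\xi, s)} \geq H_{g(\xi)} \geq h_0$, with strict inequality whenever $s > 0$. Hence $F$ maps into $\RR_{>\sigma}^{H \geq h_0}(M)$; the time-one slice $F(\cdot, 1)$ lies in $\RR_{>\sigma}^{H > h_0}(M)$; and on $\partial D^m$, where $H_{g(\xi)} > h_0$ is already strict, the estimate $H_{F(\xi, s)} \geq H_{g(\xi)} > h_0$ holds for every $s \in [0, 1]$, so the homotopy remains inside $A$ on $\partial D^m$ as required.

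The inclusions $\RR_{>\sigma}^{\II > h_0}(M) \hookrightarrow \RR_{>\sigma}^{\II \geq h_0}(M)$ and $\RR_{>\sigma}^{\II > k}(M) \hookrightarrow \RR_{>\sigma}^{\II \geq k}(M)$ are handled by the identical argument, now invoking the strict bilinear-form inequality $\II_{F(\xi, s)} > \II_{g(\xi)}$ supplied by Proposition~\ref{einfach}, and using the fact that the comparison tensors $h_0 \cdot g(\xi)_0$ and $k$ do not change during the deformation because $F(\xi,s)_0 = g(\xi)_0$. The three ``decreasing'' inclusions, $\RR_{>\sigma}^{H < h_0}(M) \hookrightarrow \RR_{>\sigma}^{H \leq h_0}(M)$ and its $\II$-analogues, are treated by the mirror argument, restricting Proposition~\ref{einfach} to the parameter range $s \in [-1, 0]$, on which $H$ and $\II$ are strictly decreased.

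There is no genuine obstacle: the entire analytic substance is already contained in Proposition~\ref{einfach}, and the present theorem reduces to bookkeeping of inequalities on the boundary. The only items worth verifying carefully are that (i) the boundary metric stays fixed along the homotopy, so the inequalities being tested are always comparisons against the \emph{same} tensor, and (ii) on $\partial D^m$ one cannot keep the homotopy constant but one does not need to, since the strict initial inequality is preserved automatically as $H$ and $\II$ evolve monotonically in the correct direction.
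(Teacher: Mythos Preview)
Your proposal is correct and follows exactly the approach the paper indicates: mimic the proof of Theorem~\ref{main}, replacing Theorem~\ref{master} by Proposition~\ref{einfach}. You have filled in the details carefully, including the two points that deserve attention---that the boundary metric is fixed so the comparison tensor $h_0\cdot g(\xi)_0$ does not move, and that on $\partial D^m$ the homotopy need not be constant since monotonicity of $H$ (resp.\ $\II$) preserves the strict inequality.
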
 

In the special case $h_0=0$ there is another interesting space one may consider, namely
\[
     \RR_{>\sigma}^{\D}(M)  := \{ g \in  \RR_{>\sigma}(M)\mid g \cup g \text{ is a smooth metric on } \D M\} \, , 
\]
the space of ``doubling'' metrics on $M$ with scalar curvature bounded by $\sigma$ from below. 
Here $\D M = M \cup_{\dM } M$ is the double of $M$.
Writing the metric near the boundary as in \eqref{eq:metricnearboundary}, the doubling condition means $g^{(\ell)}_0=0$ for all odd $\ell$.
This property is also preserved by the deformation in Theorem~\ref{master}.
In particular, we have $\II_g=0$.

\begin{corollary} \label{cor:main} 
Each of the inclusions in 
$$
\xymatrixrowsep{1pc}
\xymatrixcolsep{1pc}
\xymatrix{
    &         &        & \RR_{>\sigma}^{H=0}(M)  \ar@{^{(}->}[dr]  &  \\
    ^\nor\RR_{>\sigma}^{\II = 0}(M)  \ar@{^{(}->}[r] &
    \RR_{>\sigma}^{\D}(M) \ar@{^{(}->}[r] &
    \RR_{>\sigma}^{\II = 0}(M) \ar@{^{(}->}[ur] \ar@{^{(}->}[dr]&  
 &
    \RR_{>\sigma}^{H \geq 0}(M) \\
    & & \RR_{> \sigma}^{\II > 0}(M) \ar@{^{(}->}[r] & \RR_{>\sigma}^{\II \ge 0}(M)  \ar@{^{(}->}[ur]& \RR_{>\sigma}^{H>0}(M) \ar@{^{(}->}[u] \\
  }
$$
is a weak homotopy equivalence. 
\hfill\qed
\end{corollary}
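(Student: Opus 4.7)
The plan is to handle the inclusions in two groups. All inclusions in the diagram not adjacent to $\RR_{>\sigma}^{\D}(M)$ are special cases of Theorem~\ref{main} with $h_0=0$ or of Theorem~\ref{main_suppl}, so no new argument is needed for them. Genuine work is required only for the two inclusions
\[
{}^\nor\RR_{>\sigma}^{\II=0}(M)\hookrightarrow \RR_{>\sigma}^{\D}(M) \hookrightarrow \RR_{>\sigma}^{\II=0}(M).
\]
As a preliminary observation I would record that ${}^\nor\RR_{>\sigma}^{\II=0}(M)\subset \RR_{>\sigma}^{\D}(M)$: a $C$-normal metric with $\II_g=0$ satisfies $g_t = g_0 - Ct^2 g_0$ near $\dM$, which is a polynomial in $t^2$, so all odd normal derivatives vanish and the metric extends smoothly to the double.

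The main step is to show that the first inclusion is a weak homotopy equivalence, following the template of the proof of Theorem~\ref{main}. Given $m\ge 0$ and a continuous map $g\colon D^m\to \RR_{>\sigma}^{\D}(M)$ with $g(\partial D^m)\subset {}^\nor\RR_{>\sigma}^{\II=0}(M)$, I would apply Theorem~\ref{master} with $K=D^m$ and $k=0$. The hypothesis $\frac{1}{n-1}\tr_{g_0}(k(\xi))\le H_{g(\xi)}$ is satisfied because the doubling condition forces $\II_{g(\xi)}=0$, hence $H_{g(\xi)}=0$. The key point is that the deformation $f\colon D^m\times[0,1]\to \RR_{>\sigma}(M)$ produced by Theorem~\ref{master} preserves the doubling condition: property~(\ref{zweitfund}) gives $\II_{f(\xi,s)}=S_1(s)\II_{g(\xi)}=0$, while property~(\ref{higherderivatives}) gives $f(\xi,s)^{(\ell)}_0 = S_2(s)\cdot g(\xi)^{(\ell)}_0$ for all $\ell\ge 3$, so the odd Taylor coefficients of $g(\xi)$ at the boundary remain zero along the deformation. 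Hence $f(\xi,s)\in \RR_{>\sigma}^{\D}(M)$ throughout. Since $f(\xi,1)$ is $C$-normal with vanishing second fundamental form, and since property~(\ref{Cnormal}) keeps $C$-normal metrics $C$-normal, one obtains $f(\cdot,1)\subset {}^\nor\RR_{>\sigma}^{\II=0}(M)$ and $f(\partial D^m\times[0,1])\subset {}^\nor\RR_{>\sigma}^{\II=0}(M)$, which shows that the pair is $m$-connected for every $m\ge 0$.

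For the second inclusion $\RR_{>\sigma}^{\D}(M)\hookrightarrow \RR_{>\sigma}^{\II=0}(M)$, I would invoke the two-out-of-three property for weak homotopy equivalences: the composition ${}^\nor\RR_{>\sigma}^{\II=0}(M)\hookrightarrow \RR_{>\sigma}^{\D}(M)\hookrightarrow \RR_{>\sigma}^{\II=0}(M)$ agrees with the inclusion handled in Theorem~\ref{main} and is therefore a weak homotopy equivalence, and the first factor has just been shown to be one, so the second factor is too. The main conceptual obstacle — and really the only nontrivial matter in the whole argument — is verifying that the bending deformation of Theorem~\ref{master} respects the doubling condition; this reduces entirely to the explicit control of the first and higher-order normal Taylor coefficients built into properties~(\ref{zweitfund}) and~(\ref{higherderivatives}) of that theorem when $k=0$.
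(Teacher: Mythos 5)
Your argument is correct and matches the paper's: the paper's (terse) justification of Corollary~\ref{cor:main} is exactly that the doubling condition $g^{(\ell)}_0=0$ for odd $\ell$ is preserved by the deformation of Theorem~\ref{master} via properties~(\ref{zweitfund}) and~(\ref{higherderivatives}) with $k=0$, after which the template proof of Theorem~\ref{main} applies. Your extra preliminary observation and the two-out-of-three reduction for the second inclusion are both sound and consistent with what the paper leaves implicit.
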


In particular, for any of the inclusions appearing in Corollary~\ref{cor:main}, the subspace is nonempty if the ambient space is nonempty.
For example, if $M$ has a positive scalar curvature metric with $H>0$ then it also has a doubling metric with positive scalar curvature.
This implication is the content of \cite{GL1980}*{Thm.~5.7}.
In fact, we see that assuming $H\ge0$ suffices, compare \cite{Almeida1985}*{Thm.~1.1}. 

\begin{remark} \label{rem:doubling} 
Passing to metric doubles,  the discussion of boundary terms in the APS formula in the proof of Theorem~\ref{thm:nonexist} becomes dispensible. 
For example, let $M$ be an even dimensional compact spin manifold with boundary and let  $M$ be of infinite $K$-area.  
Then the double $\D M = M \cup_{\dM } M$ also has infinite $K$-area because any admissible $E \to M$ satisfying $\| R^E \| < \eps$ extends (by the trivial Hermitian bundle with trivial connection) over the second copy of $M$ to an admissible $E' \to \D M$ with $\| R^{E'} \| < \eps$. 
Furthermore, the double $\D M = M \cup_{\dM } M$ carries a spin structure, equal to the given spin structure on one copy of $M$ and to the opposite spin structure for the opposite orientation on the other copy. 

Now, if $M$ carries a positive scalar curvature metric with $H\ge 0$, then, by Corollary~\ref{cor:main}, it also has a doubling metric of positive scalar curvature.
Hence the manifold $\D M$ has a metric with positive scalar curvature.
This contradicts Theorem~\ref{thm:nonexist} for closed manifolds, which in this case is known, see \cite{G1996}*{Sec.~$5\tfrac{1}{4}$}.
\end{remark}

\subsection{Additional conditions on the boundary metric} \label{additional_conditions} 

We now impose additional intrinsic conditions on the boundary metric.
Let $\X\subset\RR(\dM)$ be any subset.
For example, $\X$ could be defined by any of the following conditions:
\begin{enumerate}[\myicon]
 \item having positive scalar curvature;
 \item being Einstein;
 \item having volume $1$;
 \item coinciding with a particular given metric.
\end{enumerate}
Note that a condition like $H\ge h_0$ cannot be used to define $\X$ because it depends not only on the metric of the boundary but also on the second fundamental form. 
We put 
\begin{align*}
\RR^{\X}(M) 
&:= 
\{g\in\RR(M) \mid g_0\in\X \} , \\
\RR_{>\sigma}^{\X; *}(M) 
&:= 
\RR^{\X}(M) \cap \RR_{>\sigma}^{*}(M) ,
\end{align*}
for any of the conditions $*$ listed in Table~\ref{tab:conditions}.

\begin{theorem} \label{mainwithboundary} 
For any subset $\X\subset\RR(\dM)$ each of the inclusions in 
$$
\xymatrixrowsep{1pc}
\xymatrixcolsep{1pc}
\xymatrix{
    &      &     \RR_{>\sigma}^{\X;H=h_0}(M)  \ar@{^{(}->}[dr]  & \\
    ^\nor\RR_{>\sigma}^{\X;\II = h_0}(M)  \ar@{^{(}->}[r] &
    \RR_{>\sigma}^{\X;\II = h_0}(M) \ar@{^{(}->}[ur] \ar@{^{(}->}[dr]&  
   &
    \RR_{>\sigma}^{\X;H \geq h_0}(M) \\
    & \RR_{> \sigma}^{\X; \II > h_0}(M)  \ar@{^{(}->}[r] & \RR_{>\sigma}^{\X;\II \ge h_0}(M) \ar@{^{(}->}[ur] & \RR_{>\sigma}^{\X;H>h_0}(M) \ar@{^{(}->}[u] \\
    \RR_{>\sigma}^{\X;\II < h_0}(M) \ar@{^{(}->}[r] & \RR_{>\sigma}^{\X;\II \le h_0}(M) &\RR_{>\sigma}^{\X;H < h_0}(M) \ar@{^{(}->}[r] & \RR_{>\sigma}^{\X;H \le h_0}(M) \\
     ^\nor\RR_{>\sigma}^{\X ; \II = k}(M)  \ar@{^{(}->}[r] &
    \RR_{>\sigma}^{\X ; \II = k}(M)  \ar@{^{(}->}[r]&  
    \RR_{>\sigma}^{ \X; \II \ge k}(M) &  \RR_{>\sigma}^{ \X; \II >  k}(M)      \ar@{_{(}->}[l] 
}
$$
is a weak homotopy equivalence. 
\end{theorem}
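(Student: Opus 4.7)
The plan is to run the same argument as in the proofs of Theorem~\ref{main} and Theorem~\ref{main_suppl}, checking only that the additional intrinsic boundary condition $\X$ survives the deformations constructed there. The key observation is free: property \ref{Randmetrikbleibtfest} of Theorem~\ref{master} and property \ref{Randmetrikbleibtfest2} of Proposition~\ref{einfach} say that the restriction $f(\xi,s)|_{\dM}$, and in particular the induced boundary metric $f(\xi,s)_0$, is equal to $g(\xi)_0$ throughout the homotopy. Consequently, if $g(\xi)_0 \in \X$ for each $\xi$ at the start, the same is true for $f(\xi,s)_0$ at every $s$, regardless of what $\X$ is. So the subspace $\RR^{\X}(M)$ is stable under every deformation produced by these two results.

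The first step is to establish the weak equivalences in the upper part of the diagram. Fix any inclusion of the form ${}^\nor\RR_{>\sigma}^{\X;\II=h_0}(M) \hookrightarrow \RR_{>\sigma}^{\X;*}(M)$ with $* \in \{\II=h_0,\,H=h_0,\,\II\ge h_0,\,H\ge h_0\}$, and consider a continuous $g\colon D^m \to \RR_{>\sigma}^{\X;*}(M)$ sending $\partial D^m$ into the subspace. Choose $k(\xi) := h_0\cdot g(\xi)_0$, which satisfies $\frac{1}{n-1}\tr_{g(\xi)_0}(k(\xi)) = h_0 \le H_{g(\xi)}$ by the mean convexity assumption, and apply Theorem~\ref{master}. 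The resulting $f\colon D^m\times[0,1]\to\RR_{>\sigma}(M)$ lands in $\RR_{>\sigma}^{\X;*}(M)$ by the $g_0$-preserving property noted above, and at $s=1$ it lands in ${}^\nor\RR_{>\sigma}^{\X;\II=h_0}(M)$. As in the proof of Theorem~\ref{main}, this shows the pair is $m$-connected for all $m$, so the inclusion is a weak homotopy equivalence. The argument for $k$ instead of $h_0$, giving the bottom row, is identical, with $k(\xi) := k$ and the hypothesis $\frac{1}{n-1}\tr_{g_0}(k) \le H_g$ enforced by the ambient space.

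For the inclusions of strict-inequality spaces into the nonstrict ones, use Proposition~\ref{einfach} in place of Theorem~\ref{master}: given $g\colon D^m\to\RR_{>\sigma}^{\X;H\ge h_0}(M)$ sending $\partial D^m$ into $\RR_{>\sigma}^{\X;H> h_0}(M)$, take $f(\xi,s)$ with $s \ge 0$; then $H_{f(\xi,s)} > H_{g(\xi)} \ge h_0$ for $s>0$, and again $f(\xi,s)_0 = g(\xi)_0 \in \X$ throughout. The cases $H\le h_0$, $\II\le h_0$ and $\II\le k$ are handled symmetrically by taking $s \le 0$; here one also uses that Proposition~\ref{einfach} produces $\II_{f(\xi,s)} < \II_{g(\xi)}$ for $s<0$ to keep the deformed metric below the threshold. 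In each case the pair in question is shown to be $m$-connected for every $m \ge 0$, and the long exact sequence of homotopy groups (as in \cite{Hatcher}*{p.~346}) completes the proof.

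There is no real obstacle beyond bookkeeping: the theorem is essentially a corollary of Theorem~\ref{master} and Proposition~\ref{einfach} once one observes that every deformation involved keeps the induced boundary metric fixed. The only mild subtlety is to match, for each of the eleven inclusions displayed in the diagram, the correct input (either Theorem~\ref{master} with an appropriate choice of $k$ and $h_0$, or Proposition~\ref{einfach} with an appropriate sign of $s$) and to verify that the scalar curvature lower bound $\sigma$ and the $\X$-condition are both preserved at every stage, which they are by construction.
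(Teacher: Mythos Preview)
Your proposal is correct and follows essentially the same approach as the paper: the paper's entire proof is that the arguments of Theorem~\ref{main} and Theorem~\ref{main_suppl} carry over verbatim because properties~\ref{Randmetrikbleibtfest} and~\ref{Randmetrikbleibtfest2} keep the induced boundary metric fixed, hence in $\X$. You have simply spelled out what this means in each case.
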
 

\begin{proof}
The proofs of Theorem~\ref{main} and Theorem~\ref{main_suppl} carry over because by property~\ref{Randmetrikbleibtfest} in Theorem~\ref{master} and \ref{Randmetrikbleibtfest2} in Proposition~\ref{einfach} the induced metric on the boundary does not change under the deformation and thus stays in $\X$.
\end{proof}

\begin{remark} 
Given a metric $\gamma_0$ on $\dM $ and setting $\X = \{ \gamma_0\}$ the weak homotopy equivalence  $\RR_{>\sigma}^{\X; H=h_0}(M) \hookrightarrow \RR_{>\sigma}^{\X;H\ge h_0}(M)$ means, roughly speaking, that the mean curvature of $\dM $ can be decreased while preserving the boundary metric and a given lower scalar curvature bound on $M$. 
\end{remark} 

\begin{definition}
Let $h_0\in\R$.
A metric $g$ on $M$ is said to be of $h_0$-\emph{cone type} if it takes the form 
$$
g = dt^2 + (1-th_0)^2\cdot g_0
$$
near the boundary.
Such a metric satisfies $\II_g=h_0\cdot g_0$, i.e.\ it is totally umbilic, and it is $C$-normal with $C=-h_0^2$.
\end{definition}
We write 
\begin{gather*}
\RR_{>\sigma}^{h_0\mathsf{-cone}}(M) := \{g\in\RR_{>\sigma}(M) \mid g \text{ is of $h_0$-cone type}\} \quad\text{ and}\\
\RR_{>\sigma}^{\X;h_0\mathsf{-cone}}(M) := \RR^\X(M) \cap \RR_{>\sigma}^{h_0\mathsf{-cone}}(M).
\end{gather*}

A complete Riemannian manifold $M$ of $h_0$-cone type with $h_0\ge0$ can be extended to a complete Riemannian manifold without boundary by attaching the manifold $(-\infty,0]\times \dM$ with the metric $dt^2 + (1-th_0)^2\cdot g_0$ to $M$ along $\dM$.

In the special case $h_0\equiv 0$ the condition $0$-cone type is commonly known as \emph{product type}, a boundary condition often preferred by topologists.
In this case we may attach a metric cylinder.

\begin{theorem}\label{mainwithboundaryaddendum}
Let $M$ be an $n$-dimensional manifold with compact boundary $\dM$.
Let $\sigma\colon M\to\R$ and $\sigma_0\colon \dM\to\R$ be continuous functions and let $h_0\in\R$.
Assume $\sigma_0\ge \sigma|_{\dM}+(n-1)(n-2)h_0^2$.
Then the inclusion
$$
\RR_{>\sigma}^{\{\scal_{g_0}>\sigma_0\};h_0\mathsf{-cone}}(M) \hookrightarrow {^\nor\RR}_{>\sigma}^{\{\scal_{g_0}>\sigma_0\};\II = h_0}(M)
$$
is a weak homotopy equivalence.
\end{theorem}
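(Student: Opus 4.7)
The plan is to follow the pattern of Theorem~\ref{main}, reducing the claim to showing that the pair $({^\nor\RR}_{>\sigma}^{\{\scal_{g_0}>\sigma_0\};\II=h_0}(M),\RR_{>\sigma}^{\{\scal_{g_0}>\sigma_0\};h_0\mathsf{-cone}}(M))$ is $m$-connected for every $m\geq 0$. A $C$-normal metric $g$ with $\II_g=h_0\cdot g_0$ has the form $g=dt^2+(1-2th_0-Ct^2)g_0$ near $\dM$, and is of $h_0$-cone type precisely when $C=-h_0^2$. Given a continuous relative map $g\colon (D^m,S^{m-1})\to ({^\nor\RR}_{>\sigma}^{\{\scal_{g_0}>\sigma_0\};\II=h_0}(M),\RR_{>\sigma}^{\{\scal_{g_0}>\sigma_0\};h_0\mathsf{-cone}}(M))$, the rule $\xi\mapsto C(\xi):=-\frac{1}{2(n-1)}\tr_{g(\xi)_0}(\ddot g(\xi)_0)$ defines a continuous function $C\colon D^m\to\R$ with $C|_{S^{m-1}}\equiv -h_0^2$, and by compactness the $C(\xi)$-normal form of $g(\xi)$ holds on $U^{g(\xi)}_{2\eps}$ for some uniform $\eps>0$.

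The next step is to interpolate $C$ linearly towards $-h_0^2$ in a neighborhood of $\dM$ by setting
$$
\tilde g(\xi,s)_t := (1-2th_0-C_s(\xi)\,t^2)\cdot g(\xi)_0,\quad C_s(\xi):=(1-s)C(\xi)+s(-h_0^2).
$$
Formula~\eqref{scal} yields
$$
\scal_{\tilde g(\xi,s)}\big|_{t=0} = \scal_{g(\xi)_0}+(n-1)(4-n)h_0^2+2(n-1)C_s(\xi),
$$
which is affine in $s$, hence a convex combination of $\scal_{g(\xi)}|_{\dM}>\sigma|_{\dM}$ (at $s=0$) and $\scal_{g(\xi)_0}-(n-1)(n-2)h_0^2$ (at $s=1$). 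The latter exceeds $\sigma|_{\dM}$ thanks to the standing hypothesis $\sigma_0\geq\sigma|_{\dM}+(n-1)(n-2)h_0^2$ combined with $\scal_{g(\xi)_0}>\sigma_0$. Thus $\scal_{\tilde g(\xi,s)}>\sigma$ on $\dM$ for every $(\xi,s)$, and continuity together with compactness of $D^m\times[0,1]$ propagate the inequality to a uniform tubular neighborhood $U^{g(\xi)}_{\eps_1}$ with $0<\eps_1<2\eps$.

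Exactly as in Proposition~\ref{step1}, I then invoke the family flexibility lemma~\cite{BH}*{Addendum~3.4} for the open second-order relation $\{\scal>\sigma\}$ to extend $\tilde g$ to a continuous map $f\colon D^m\times[0,1]\to\RR_{>\sigma}(M)$ which agrees with $\tilde g(\xi,s)$ on some $U^{g(\xi)}_{\eps_2}$ for $\eps_2\in(0,\eps_1)$, equals $g(\xi)$ outside a slightly larger neighborhood, and is constant in $s$ whenever $\tilde g(\xi,\cdot)\equiv g(\xi)$. Since $\tilde g$ leaves both $g(\xi)_0$ and $h_0\cdot g(\xi)_0=\II_{\tilde g(\xi,s)}$ unchanged, so does $f$, placing $f(\xi,s)$ in ${^\nor\RR}_{>\sigma}^{\{\scal_{g_0}>\sigma_0\};\II=h_0}(M)$ throughout. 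For $\xi\in S^{m-1}$ we have $C(\xi)=-h_0^2$, so $C_s(\xi)\equiv -h_0^2$ and $\tilde g(\xi,\cdot)\equiv g(\xi)$, giving $f(\xi,s)=g(\xi)$; at $s=1$ the metric $f(\xi,1)$ is $(-h_0^2)$-normal near $\dM$, i.e.\ of $h_0$-cone type, which yields the required relative homotopy.

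The central subtlety is that the target value $C=-h_0^2$ generically lies below the monotonicity threshold $C_0=\max_\xi C(\xi)$ that governs Proposition~\ref{step1}, so scalar curvature cannot be kept nondecreasing along the interpolation. The sharp quantitative assumption $\sigma_0\geq\sigma|_{\dM}+(n-1)(n-2)h_0^2$ is exactly what secures a positive boundary scalar-curvature margin at the $s=1$ endpoint, and it is precisely this margin that makes the convex-combination argument at $t=0$ compensate for the absence of monotonicity.
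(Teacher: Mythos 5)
Your proof is correct and follows essentially the same route as the paper: interpolate the $C$-normal coefficient linearly from $C(\xi)$ to $-h_0^2$ near the boundary, compute the resulting boundary scalar curvature via \eqref{scal} (your closed form $\scal_{g(\xi)_0}+(n-1)(4-n)h_0^2+2(n-1)C_s(\xi)$ agrees with the paper's term-by-term expansion), use the affine dependence on $s$ together with the hypothesis $\sigma_0\ge\sigma|_{\dM}+(n-1)(n-2)h_0^2$ to keep the inequality $\scal>\sigma$ along $\dM$, and then globalize with the family flexibility lemma \cite{BH}*{Addendum~3.4}. The only cosmetic difference is that you parameterize the interpolation by the coefficient $C_s(\xi)$ rather than writing out the coefficient of $t^2$ explicitly, and your closing remark correctly identifies why the monotone scheme of Proposition~\ref{step1} cannot be used here and why the quantitative hypothesis on $\sigma_0$ is the right substitute.
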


\begin{proof}
Let $g\colon D^{\ell }\to {^\nor\RR}_{>\sigma}^{\{\scal_{g_0}>\sigma_0\};\II = h_0}(M)$ be a continuous map with $g(\partial D^{\ell })\subset \RR_{>\sigma}^{\{\scal_{g_0}>\sigma_0\};h_0\mathsf{-cone}}(M)$.
Near $\dM$ we write 
$$
g(\xi) = dt^2 + \big(1-2th_0-C(\xi)t^2\big)\cdot g_0(\xi).
$$
We define, near $\dM$, the deformation
$$
F(\xi,s) := dt^2 + \big(1-2th_0-(1-s)C(\xi)t^2 + sh_0^2t^2\big)\cdot g_0(\xi).
$$
Then $F(\xi,s)_0 = g(\xi)_0$ and $\II_{F(\xi,s)} = h_0\cdot g(\xi)_0 =h_0\cdot F(\xi,s)_0$.
Hence the Weingarten map of the boundary is given by $W_{F(\xi,s)}= h_0\cdot\id$.

For the scalar curvature of $F(\xi,s)$ we find along the boundary by \eqref{scal}:
\begin{align*}
\scal_{F(\xi,s)}|_{\dM}
&=
\scal_{F(\xi,s)_0} + 3 \tr(W_{F(\xi,s)}^2) - \tr(W_{F(\xi,s)})^2 - \tr_{F(\xi,s)_0}(\ddot F(\xi,s)_0) \\
&=
\scal_{g(\xi)_0} + 3 \tr(h_0^2\id) - \tr(h_0\id)^2 - \tr_{g(\xi)_0}((1-s)\ddot g(\xi)_0+2sh_0^2 g_0(\xi)) \\
&=
(1-s)\scal_{g(\xi)}|_{\dM} + s \big[\scal_{g(\xi)_0} + 3(n-1) h_0^2 - ((n-1)h_0)^2-2h_0^2(n-1)\big] \\
&=
(1-s)\scal_{g(\xi)}|_{\dM} + s \big[\scal_{g(\xi)_0} - (n-1)(n-2) h_0^2\big] \\
&>
(1-s)\sigma|_{\dM} + s [\sigma_0- (n-1)(n-2) h_0^2] \\
&\ge
\sigma|_{\dM}.
\end{align*}
By continuity, $\scal_{F(\xi,s)}>\sigma$ on a neighborhood of $\dM$.
Applying the family version of the local flexibility lemma \cite{BH}*{Addendum~3.4} for the partial differential relation given by $\scal>\sigma$ yields a deformation $f$ of $g$ such that $f(\xi,s)\in\RR_{>\sigma}^{\{\scal_{g_0}>\sigma_0\};h_0\mathsf{-cone}}(M)$ for $(\xi,s)\in (\partial D^{\ell }  \times [0,1])\cup D^{\ell }\times\{1\}$.
\end{proof}

Combining Theorems~\ref{mainwithboundary} and \ref{mainwithboundaryaddendum} in the special case $\sigma=\sigma_0=h_0=0$ we get

\begin{corollary} \label{cor:mainwithboundaryaddendum}
Each of the inclusions in 
$$
\xymatrixrowsep{1pc}
\xymatrixcolsep{1pc}
\xymatrix{
    & &   \RR_{>0}^{\{\scal_{g_0}>0\};H=0}(M)  \ar@{^{(}->}[dr] & \\
    ^\nor\RR_{>0}^{\{\scal_{g_0}>0\};\II = 0}(M)  \ar@{^{(}->}[r] &
    \RR_{>0}^{\{\scal_{g_0}>0\};\II = 0}(M) \ar@{^{(}->}[ur] \ar@{^{(}->}[dr]&  
  &
    \RR_{>0}^{\{\scal_{g_0}>0\};H \geq 0}(M) \\
    \RR_{>0}^{\{\scal_{g_0}>0\};0\mathsf{-cone}}(M)  \ar@{^{(}->}[u]& \RR_{>0}^{\{ \scal_{g_0} > 0\} ; \II>0}(M) \ar@{^{(}->}[r] & \RR_{>0}^{\{\scal_{g_0}>0\};\II \ge 0}(M) \ar@{^{(}->}[ur] & \RR_{>0}^{\{\scal_{g_0}>0\};H>0}(M) \ar@{^{(}->}[u] \\
    \RR_{>0}^{\{\scal_{g_0}>0\};\II < 0}(M) \ar@{^{(}->}[r] & \RR_{>0}^{\{\scal_{g_0}>0\};\II \le 0}(M) &\RR_{>0}^{\{\scal_{g_0}>0\};H < 0}(M) \ar@{^{(}->}[r] & \RR_{>0}^{\{\scal_{g_0}>0\};H \le 0}(M)
}
$$
is a weak homotopy equivalence.
\hfill\qed
\end{corollary}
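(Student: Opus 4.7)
The plan is to obtain this corollary as a direct specialization of Theorems~\ref{mainwithboundary} and \ref{mainwithboundaryaddendum}, applied with the parameter choices
\[
\sigma \equiv 0,\qquad \sigma_0 \equiv 0,\qquad h_0 = 0,\qquad \X := \{\gamma \in \RR(\dM) \mid \scal_\gamma > 0\}.
\]
Under these choices, the space $\RR^{\{\scal_{g_0}>0\};\,*}_{>0}(M)$ appearing in the statement coincides with $\RR^\X(M) \cap \RR^{*}_{>\sigma}(M) = \RR^{\X;\,*}_{>\sigma}(M)$ in the notation of Subsection~\ref{additional_conditions}, so the two diagrams have identical vertices.

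First I would check that every inclusion in the displayed diagram \emph{except} the leftmost vertical one
\[
\RR_{>0}^{\{\scal_{g_0}>0\};\,0\mathsf{-cone}}(M) \hookrightarrow {}^\nor\RR_{>0}^{\{\scal_{g_0}>0\};\,\II = 0}(M)
\]
appears, up to relabelling, as one of the arrows already treated in Theorem~\ref{mainwithboundary}. Each such inclusion is therefore a weak homotopy equivalence by that theorem.

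For the remaining inclusion, I would invoke Theorem~\ref{mainwithboundaryaddendum}. Its numerical hypothesis $\sigma_0 \ge \sigma|_{\dM} + (n-1)(n-2)h_0^2$ reduces to $0 \ge 0$ and is thus trivially satisfied. The conclusion of that theorem gives exactly the desired weak homotopy equivalence. Note that $0$-cone type is just the product condition $g = dt^2 + g_0$ near $\dM$, which indeed sits inside the space of $C$-normal metrics (with $C = 0$) whose boundary is totally geodesic, making the inclusion into $^\nor\RR_{>0}^{\{\scal_{g_0}>0\};\,\II=0}(M)$ well defined.

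The only step requiring genuine care, though still routine, is the arrow-by-arrow bookkeeping: one must verify that every edge in the corollary's diagram matches a counterpart in Theorem~\ref{mainwithboundary}, in particular the edges involving the strict-inequality spaces $\II>0$, $H>0$, $\II<0$, $H<0$ (which in Theorem~\ref{mainwithboundary} are in turn handled via the argument modelled on Theorem~\ref{main_suppl} and Proposition~\ref{einfach}). Beyond this relabelling no further geometric input is needed, which is the reason the present statement is formulated as a corollary rather than as a theorem.
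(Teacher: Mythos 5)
Your proposal matches the paper's proof exactly: the corollary is stated immediately after the sentence ``Combining Theorems~\ref{mainwithboundary} and \ref{mainwithboundaryaddendum} in the special case $\sigma=\sigma_0=h_0=0$ we get,'' and your specialization with $\X=\{\scal_{g_0}>0\}$, the arrow-by-arrow identification with Theorem~\ref{mainwithboundary}, and the appeal to Theorem~\ref{mainwithboundaryaddendum} for the leftmost vertical arrow (with the hypothesis $\sigma_0\ge\sigma|_{\dM}+(n-1)(n-2)h_0^2$ reducing to $0\ge0$) are precisely what that sentence encodes. The bookkeeping and the remark that a $0$-cone metric is $0$-normal with $\II=0$ are both correct.
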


In particular, this tells us that if $M$ carries a positive scalar curvature metric such that the boundary satisfies $H\ge0$ and also has positive scalar curvature then $M$ carries such a metric of product type.

\begin{remark}
If we drop the assumption that the boundary has positive scalar curvature itself, the embedding of positive scalar curvature metrics on $M$ with product structure near the boundary into any of the other spaces of metrics considered here is no longer a weak homotopy equivalence in general.

As an example let $M=D^2\times T^{n-2}$.
We give $D^2$ the metric of a round hemisphere, $T^{n-2}$ a flat metric and $M$ the product metric which we denote by $g$.
Then $g\in\RR_{>0}^\D(M)\subset \RR_{>0}^{\II=0}(M)$, the double of $M$ being $S^2\times T^{n-2}$.

But $M$ cannot carry a positive scalar curvature metric with product structure near the boundary.
If it did, the boundary would inherit a positive scalar curvature metric which is impossible since $\dM=T^{n-1}$.
Thus $\RR_{>0}^{0\mathsf{-cone}}(M)=\emptyset$ and the embedding
$$
\RR_{>0}^{0\mathsf{-cone}}(M) \hookrightarrow \RR_{>0}^{\II=0}(M)
$$
is clearly not a weak homotopy equivalence.
\end{remark}

\subsection{Additional conditions in the interior} 

Given a neighborhood $\dM  \subset \U \subset M$ the deformations of Theorem~\ref{main} and Theorem~\ref{main_suppl} can be assumed to be constant on $M \setminus \U$. 
We can therefore  restrict the spaces in Theorems~\ref{mainwithboundary} and \ref{mainwithboundaryaddendum} and in Corollary~\ref{cor:mainwithboundaryaddendum} to subspaces which are invariant under deformations supported near the boundary.

More precisely, if $\mathscr{Y}$ is an arbitrary subset of $\{ g|_{M\setminus\U} \mid g\in\RR(M)\}$ then after intersecting all spaces in Theorem~\ref{mainwithboundary}, \ref{mainwithboundaryaddendum} or Corollary~\ref{cor:mainwithboundaryaddendum} with $_\mathscr{Y}\RR(M) := \{g\in\RR(M) \mid g|_{M\setminus\U}\in\mathscr{Y}\}$ the resulting embeddings are still weak homotopy equivalences.

Prominent examples of such spaces are
\begin{enumerate}[\myicon]
\item $_\mathscr{Y}\RR(M) = \{g\in\RR(M) \mid g \text{ is complete}\}$;
\item $_\mathscr{Y}\RR(M) = \{g\in\RR(M) \mid g \text{ is incomplete}\}$;
\item $_\mathscr{Y}\RR(M) = \{g\in\RR(M) \mid g \text{ is asymptotically flat}\}$;
\item $_\mathscr{Y}\RR(M) = \{g\in\RR(M) \mid g|_{M\setminus\U} = \bar g|_{M\setminus\U}\}$ where $\bar g\in\RR(M)$ is fixed.
\end{enumerate}

\subsection{Spaces of metrics with mean-convex singularities}

We can apply the deformation of metrics with fixed boundary metrics in order to study metrics with singularities, see \cite{Miao}. 
Let $\hat{M}$ be a smooth manifold and $\Sigma \subset \hat{M}$ be a closed hypersurface with trivial normal bundle so that $\hat{M}  = M_1 \cup_{\Sigma} M_2$, where $M_1$ and $M_2$ are smooth manifolds with compact boundary $\Sigma$. 
Let  $M = M_1 \sqcup M_2$ be the disjoint union of $M_1$ and $M_2$. 
This is a smooth manifold with compact boundary $\Sigma \sqcup \Sigma$. 
Let $\sigma \colon  \hat M \to \R$ be continuous. 
By precomposing with the canonical map $M \to \hat M$ this induces a continuous map $M \to \R$ which we denote by the same letter $\sigma$. 
Consider
\[
      \RR_{> \sigma}^{\Sigma}(\hat M)  := \{ g_1 \sqcup  g_2   \in \RR_{> \sigma} (M) \mid (g_1)_0 = (g_2)_0 , H_{g_1} + H_{g_2} \geq 0\} , 
\]
the space of metrics on $\hat M$ with scalar curvature greater than $\sigma$ and  ``mean convex singularity'' at $\Sigma$. 
This space is equipped with the subspace topology of $\RR_{>\sigma}(M)$. 

Each smooth $g \in \RR_{> \sigma}(\hat M)$ induces $ g_1 \sqcup g_2 \in \RR_{> \sigma}^{\Sigma}(\hat M)$ where $g_j = g|_{M_j}$ for $j=1, 2$. 
Conversely, $g_1 \sqcup g_2 \in \RR_{> \sigma}^{\Sigma}(\hat M)$ induces a smooth metric on $\hat M$ if and only if $(g_1)^{(\ell)}_0 = (-1)^{\ell} \cdot (g_2)^{(\ell)}_0 \in C^{\infty}(\Sigma ; T^* \Sigma \otimes T^* \Sigma)$ for $\ell \geq 0$.

\begin{theorem} \label{desingularize} 
The inclusion $\RR_{> \sigma} (\hat M) \hookrightarrow \RR_{> \sigma}^{\Sigma}(\hat M)$ is a weak homotopy equivalence. 
\end{theorem}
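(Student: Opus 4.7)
The plan is to apply Theorem~\ref{master} separately to each of the two pieces $g_1$ and $g_2$, with cleverly chosen target second fundamental forms that are negatives of each other. Given a continuous family $g = g_1 \sqcup g_2 \colon K \to \RR_{>\sigma}^{\Sigma}(\hat M)$, I would set
$$
k_1(\xi) := \tfrac{1}{2}\bigl(\II_{g_1(\xi)} - \II_{g_2(\xi)}\bigr), \qquad k_2(\xi) := -k_1(\xi) = \tfrac{1}{2}\bigl(\II_{g_2(\xi)} - \II_{g_1(\xi)}\bigr),
$$
where $\II_{g_i}$ is computed with the interior normal of $M_i$. The mean-convex singularity assumption $H_{g_1}+H_{g_2}\ge 0$ gives $H_{g_1} - H_{g_2}\le 2H_{g_1}$ and symmetrically, so the trace hypothesis $\tfrac{1}{n-1}\tr_{(g_i)_0}(k_i)\le H_{g_i}$ of Theorem~\ref{master} is satisfied for both $i=1,2$. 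Choosing a single large constant $C$ that works for both families simultaneously, Theorem~\ref{master} produces continuous deformations $f_i \colon K\times[0,1]\to\RR_{>\sigma}(M_i)$, and I set $f(\xi,s):=f_1(\xi,s)\sqcup f_2(\xi,s)$.

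Next I would check that $f$ stays in $\RR_{>\sigma}^{\Sigma}(\hat M)$ throughout. By property~\ref{Randmetrikbleibtfest} of Theorem~\ref{master} the boundary metrics $(f_i)_0$ are constant in $s$, so they continue to agree on $\Sigma$. Property~\ref{zweitfund} yields
$$
\II_{f_1(\xi,s)} + \II_{f_2(\xi,s)} = S_1(s)\bigl(\II_{g_1}+\II_{g_2}\bigr) + (1-S_1(s))(k_1+k_2) = S_1(s)\bigl(\II_{g_1}+\II_{g_2}\bigr),
$$
whose trace is the nonnegative multiple $S_1(s)(H_{g_1}+H_{g_2})$ of a nonnegative function, so mean convexity is preserved. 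At $s=1$ we have $\II_{f_1(\xi,1)} = k_1(\xi) = -k_2(\xi) = -\II_{f_2(\xi,1)}$ and both $f_i(\xi,1)$ are $C$-normal with the same constant $C$; the $C$-normal form $g_t = g_0 - 2t\II_g - Ct^2 g_0$ pins down all jets of the metric along $\Sigma$, and one checks directly that the matching/mismatch of signs across $\Sigma$ needed for smooth gluing on $\hat M$ holds for each $\ell$. Hence $f(\xi,1)\in\RR_{>\sigma}(\hat M)$.

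The last and most delicate point is that the deformation must preserve the subspace $\RR_{>\sigma}(\hat M)$: if $g(\xi)$ already glues smoothly, then $f(\xi,s)$ should glue smoothly for all $s$. This is where the precise form of Theorem~\ref{master} is essential. Smooth gluing means $(g_1)^{(\ell)}_0 = (-1)^\ell (g_2)^{(\ell)}_0$ for all $\ell\ge 0$; in particular $\II_{g_1}=-\II_{g_2}$ and so by construction $k_i(\xi)=\II_{g_i(\xi)}$, giving $\II_{f_i(\xi,s)}=\II_{g_i(\xi)}$ and thus matching first jets along $\Sigma$ throughout the deformation. For $\ell=2$, property~\ref{secondderivative} of Theorem~\ref{master} gives $\ddot f_i(\xi,s)_0 = S_2(s)\ddot g_i(\xi)_0 - 2(1-S_2(s))Cg_i(\xi)_0$; since both $\ddot g_i$ and $(g_i)_0$ agree across $\Sigma$, so does $\ddot f_i$. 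For $\ell\ge 3$, property~\ref{higherderivatives} gives $f_i(\xi,s)^{(\ell)}_0=S_2(s)\cdot g_i(\xi)^{(\ell)}_0$, and the scaling factor $S_2(s)$ is the same on both sides. Hence all higher jets continue to match with the correct signs, so $f(\xi,s)\in\RR_{>\sigma}(\hat M)$ whenever $g(\xi)$ is.

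This shows that for every $m\ge 0$ the pair $\bigl(\RR_{>\sigma}^{\Sigma}(\hat M),\RR_{>\sigma}(\hat M)\bigr)$ is $m$-connected, so by the long exact sequence of homotopy groups the inclusion is a weak homotopy equivalence. The main obstacle is the third paragraph: orchestrating the family-wise deformations of Theorem~\ref{master} on $M_1$ and $M_2$ so that they are compatible at every order of normal jet whenever the input is already smoothly glued, which is forced exactly by the symmetric choice $k_2=-k_1$ together with the universal normal form imposed by $C$-normality.
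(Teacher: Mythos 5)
Your proof is correct and takes essentially the same approach as the paper: apply Theorem~\ref{master} to $M_1\sqcup M_2$ with a target second fundamental form that forces smooth gluing at $s=1$ (via $C$-normality with a common constant $C$ and $\II_{f_1(\cdot,1)}=-\II_{f_2(\cdot,1)}$) while preserving both the mean-convex singularity condition (since $\II_{f_1}+\II_{f_2}=S_1(s)(\II_{g_1}+\II_{g_2})$) and the subspace of already-smooth metrics (using parts \ref{zweitfund}, \ref{secondderivative}, \ref{higherderivatives}). The only cosmetic difference is your symmetric choice $k=\tfrac12(\II_{g_1}-\II_{g_2})\sqcup\tfrac12(\II_{g_2}-\II_{g_1})$ versus the paper's asymmetric $k=-\II_{g_2}\sqcup\II_{g_2}$; both verify the trace hypothesis of Theorem~\ref{master} via $H_{g_1}+H_{g_2}\ge0$ and yield the same conclusion.
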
 

\begin{proof} 
Let ${m } \geq 0$ and $g = g_1 \sqcup g_2  \colon  D^{m }  \to \RR_{> \sigma}^{\Sigma}(\hat M)$ be continuous such that $g( \partial D^{m } ) \subset \RR_{> \sigma} (\hat M)$.  

We apply Theorem~\ref{master} to $M$ with  $k(\xi) =  - \II_{g_2(\xi)} \sqcup \II_{g_2(\xi)} \in C^{\infty} ( \dM  ; T^* \dM  \otimes T^* \dM )$.
This results in a continuous map $f =  D^{m }  \times [0,1] \to \RR_{> \sigma} ^{\Sigma}(\hat M)$ such that $f(\xi,s) \in  \RR_{> \sigma}(\hat M)$ for $(\xi, s) \in (\partial D^{{m }} \times [0,1]) \cup (D^{m } \times \{1\})$ using  parts~\ref{zweitfund},  \ref{secondderivative} and \ref{higherderivatives} of Theorem~\ref{master}. 
\end{proof} 

In particular, if $\hat M$ has a positive scalar curvature metric which is singular along $\Sigma$ satisfying $H_{g_1}+H_{g_2}\ge0$ as described above then it also has a smooth positive scalar curvature metric.

\subsection{Min-Oo's conjecture} \label{minoo} 

Let $M=S^n_+$ be the $n$-dimensional hemisphere. 
Min-Oo conjectured that if $g$ is a Riemannian metric on $M$ such that
\begin{enumerate}[\myicon]
\item $\scal_g\ge n(n-1)$;
\item $g_0$ coincides with the standard metric $g_{S^{n-1}}$ on $\dM=S^{n-1}$;
\item $\II_g=0$;
\end{enumerate}
then $g$ is isometric to the standard metric $g_{S^n_+}$ of constant sectional curvature, see \cite{M1}*{Thm.~1} or \cite{M2}*{Thm.~4}.

Much to the surprise of many, Brendle, Marques, and Neves showed in \cite{BMN} that this is not true if $n\ge3$.
To construct counterexamples, they proceed in two steps:

\emph{Step~1 (\cite{BMN}*{Thm.~4}):}
Using a refined pertubation analysis they construct a metric $g$ on $M$ such that 
\begin{enumerate}[\myicon]
\item $\scal_g > n(n-1)$;
\item $g=g_{S^n_+}$ along $\dM$;
\item $H_g>0$.
\end{enumerate}

\emph{Step~2 (\cite{BMN}*{Thm.~5 and Cor.~6}):}
Pertubing $g$ further, they find a metric $\hat g$ such that
\begin{enumerate}[\myicon]
\item $\scal_{\hat g} > n(n-1)$;
\item $\hat g=g_{S^n_+}$ along $\dM$;
\item $\II_{\hat g}=0$.
\end{enumerate}

The second step can alternatively be performed using Theorem~\ref{mainwithboundary}.
Putting $\sigma=n(n-1)$, $h_0=0$, and $\X=\{g_{S^{n-1}}\}$ the theorem tells us that the embedding 
$$
\RR_{>n(n-1)}^{\{g_{S^{n-1}}\}; \II=0}(M) \hookrightarrow \RR_{>n(n-1)}^{\{g_{S^{n-1}}\}; H\ge 0}(M)
$$
is a weak homotopy equivalence.
Hence, since $\RR_{>n(n-1)}^{\{g_{S^{n-1}}\}; H\ge 0}(M)$ is nonempty by the first step, then $\RR_{>n(n-1)}^{\{g_{S^{n-1}}\}; \II=0}(M)$ is nonempty as well.
Note that it suffices to know $g_0=g_{S^{n-1}}$ instead of $g=g_{S^n_+}$ along $\dM$.

Even more, if one can find a nontrivial homotopy class in $\RR_{>n(n-1)}^{\{g_{S^{n-1}}\}; H\ge 0}(M)$ then one will obtain a nontrivial homotopy class in the space of counterexamples to Min-Oo's conjecture.

\begin{remark}
Applying the flexibility lemma once more, it is easy to deduce Theorem~5 as stated in \cite{BMN} from our Theorem~\ref{mainwithboundary} although this is not necessary for the second step in the construction of the counterexamples.
\end{remark}

\subsection{Homotopy groups of spaces of metrics with lower scalar curvature and mean curvature bounds} \label{homgr} 

Let $m \geq 0$. 
By   \cite{HSS14}*{Thm.~1.4 and Rem.~1.5} (for  $m =3$ also see~\cite{KKRW}) there is a smooth fiber bundle $F \hookrightarrow P \to S^{m+1}$ with the following properties: 
\begin{enumerate}[\myicon] 
  \item $F$ is a closed connected spin manifold admitting a metric $g_F$ of positive scalar curvature, 
  \item $P$ is a closed spin manifold with $\Ahat[P] := \int_P\Ahat(P) \neq 0$.
\end{enumerate} 

Choose some compact connected spin manifold $W$ with boundary such that $W$ has infinite $K$-area. 
For example, we may take $W$ as the $3$-torus with a small solid torus removed as in Example~\ref{ex:threetorus}. 
Set 
\begin{equation}
M  := W \times F.
\label{eq:DefM}
\end{equation}
This is a compact connected spin manifold with boundary $\dM = \partial W \times F$. 
Let $\sigma \colon  M \to \R$ and $\sigma_0 \colon  \dM  \to \R$ be continuous and let $h_0 \colon \dM\to \R$ be smooth. 

Pick a Riemannian metric $g_W$ on $W$ with $\II_{g_W}=C\cdot (g_W)_0$ for some constant $C > \frac{\dim F + \dim W - 1}{\dim W -1}  \cdot h_0$. 
For any metric $g'_F$ on $F$ the metric $g : = g_W \oplus g'_F$ on $M$ satisfies
 \begin{align} \label{estmean} 
       H_g  = \frac{\tr_g(\II_g)}{\dim W + \dim F - 1}  = \frac{ \tr_{g_W} (\II_{g_W})}{\dim W + \dim F -1} =  \frac{(\dim W - 1) \cdot C  }{\dim W + \dim F -1} > h_0 . 
 \end{align} 
The total space of the bundle $F \to P \to S^{m+1}$ can be written in the form $P = (D^{m+1} \times F) \cup_{\phi} (D^{m+1} \times F)$ for a clutching map $\phi \colon  S^{m} \to \Diff(F)$ such that the adjoint map $S^m \times F \to S^m \times F$, $(\xi, f) \mapsto (\xi, \phi(\xi)(f))$, is smooth.
By compactness of $S^m$, the additivity of scalar curvature in Riemannian products, $\scal_{g_F}>0$, and \eqref{estmean} there exists  $\eps_0 > 0 $ such that for all $0 < \eps \leq \eps_0$ and $\xi \in S^m$ we have $g_W \oplus \eps \cdot \phi(\xi)^*(g_F) \in \RR_{>\sigma}^{\{\scal_{g_0} > \sigma_0\} ; H > h_0}(M)$. 
Consider
\begin{equation}
\omega \colon  S^m \to \RR_{>\sigma}^{\{\scal_{g_0} > \sigma_0\} ; H > h_0}(M), \quad
\omega(\xi) =  g_W \oplus \eps_0  \cdot \phi(\xi)^*(g_F), 
\label{eq:DefOmega}
\end{equation}
and put $\hat g  := g_W \oplus \eps_0 \cdot g_F $. 

\begin{theorem} \label{thm:homotopy} 
Let $M$ be as in \eqref{eq:DefM} and assume $\sigma\ge0$ and $h_0\ge0$.
Then the map $\omega$ constructed in \eqref{eq:DefOmega} represents nonzero classes in   $\pi_{m}\Big( \RR_{>\sigma}^{\{\scal_{g_0} > \sigma_0\} ; H > h_0}(M) , \hat g\Big)$ and in $\pi_{m}\Big( \RR_{> \sigma}^{H > h_0}(M) , \hat g\Big)$.
\end{theorem}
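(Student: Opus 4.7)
The plan is to argue by contradiction, combining a Hitchin-type submersion construction with Theorem~\ref{thm:nonexist}. Suppose $\omega$ extends to a continuous map $\Omega\colon D^{m+1}\to\RR_{>\sigma}^{H>h_0}(M)$ with $\Omega|_{S^m}=\omega$. Using the clutching description $P=(D^{m+1}_+\times F)\cup_\phi(D^{m+1}_-\times F)$, identify
$$
W\times P\;\cong\;(D^{m+1}_+\times M)\cup_{\tilde\phi}(D^{m+1}_-\times M),\qquad \tilde\phi(\xi,w,f)=(\xi,w,\phi(\xi)(f)),
$$
for $\xi\in S^m$. I will construct on this total space a Riemannian metric $G$ with $\scal_G>0$ and $H_G>0$ along the boundary $\partial(W\times P)=\partial W\times P$. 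Since $P$ is a closed connected spin manifold with $\Ahat[P]\neq 0$ and $W$ has (stably) infinite $K$-area, Theorem~\ref{thm:nonexist} applied with $N=P$ and to $W$ then yields the desired contradiction.

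To build $G$, I would fix a horizontal distribution on the fiber bundle $W\times P\to S^{m+1}$ compatible with the clutching trivialization and pick a large scaling parameter $R\gg 1$. Over $\xi\in D^{m+1}_+$ use $\Omega(\xi)$ as fiber metric, and over $D^{m+1}_-$ use the constant fiber metric $\hat g$. These agree along the equator under $\tilde\phi$ because $\phi(\xi)^*\hat g=\omega(\xi)=\Omega(\xi)|_{S^m}$, so together with the rescaled base metric $R^2 g_{S^{m+1}}$ and the horizontal distribution they assemble into a well-defined Riemannian submersion metric $G=G_R$ on $W\times P$.

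The O'Neill submersion formula then gives, schematically,
$$
\scal_{G_R}\;=\;\scal_{\text{fiber}}+O(R^{-2}),
$$
where the $O(R^{-2})$ error absorbs the pullback of the base scalar curvature together with all O'Neill tensor terms coming from the $\xi$-dependence of the fiber metric. Because $\Omega$ is continuous on the compact set $D^{m+1}$ and $M$ is compact, $\scal_{\Omega(\xi)}-\sigma$ and $H_{\Omega(\xi)}-h_0$ are uniformly bounded away from zero; using $\sigma\ge 0$ the fiber scalar curvature is uniformly positive, so $\scal_{G_R}>0$ on all of $W\times P$ for $R$ large enough. An analogous computation on the boundary shows that the mean curvature of $\partial W\times P$ in $(W\times P,G_R)$ equals the fiberwise mean curvature $H_{\Omega(\xi)}>h_0\ge 0$ up to corrections of order $R^{-1}$, and so stays strictly positive. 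This contradicts Theorem~\ref{thm:nonexist} and proves $[\omega]\neq 0$ in $\pi_m\bigl(\RR_{>\sigma}^{H>h_0}(M),\hat g\bigr)$.

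Nontriviality of $[\omega]$ in the smaller space $\RR_{>\sigma}^{\{\scal_{g_0}>\sigma_0\};H>h_0}(M)$ is then immediate: any null-homotopy of $\omega$ in this subspace is a fortiori a null-homotopy in $\RR_{>\sigma}^{H>h_0}(M)$, which the previous paragraph rules out. I expect the main technical obstacle to be the uniform submersion-geometry analysis in the parameter $\xi\in S^{m+1}$: one must verify that the glued fiber family and the horizontal distribution can be chosen smoothly so that the O'Neill error terms are of size $O(R^{-2})$ uniformly on the equator where the two hemisphere constructions are matched, and establish the analogous uniform estimate for $H_{G_R}$ on $\partial W\times P$. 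These estimates are standard in the Hitchin--Walsh framework (compare~\cite{HSS14}), but the presence of the boundary $\partial W\subset W$ requires some care.
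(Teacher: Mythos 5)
Your proof is correct and is built around exactly the same mechanism as the paper's: use the assumed null-homotopy to define a smooth family of fiber metrics on a $W$-times-something bundle over an $(m{+}1)$-manifold, give the total space a Riemannian submersion metric, scale to make the fiber scalar curvature dominate while controlling the boundary mean curvature via the O'Neill formulas, and then derive a contradiction with Theorem~\ref{thm:nonexist}. The one genuine difference is in the choice of parametrizing base: you work directly with $W\times P\to S^{m+1}$ using the hemisphere clutching description of $P$, whereas the paper forms the parametrized mapping torus $N=(S^m\times[0,1]\times F)/\bigl((\xi,0,f)\sim(\xi,1,\phi(\xi)(f))\bigr)$ over $S^m\times S^1$, feeds the homotopy $h$ in as the family of fiber metrics, and then has to invoke a spin-bordism argument to conclude $\Ahat[N]=\Ahat[P]\neq0$. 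Your version side-steps that bordism step since $\Ahat[P]\neq0$ is given directly, which is a mild streamlining; the price is that you must smooth out the kink of the piecewise-defined fiber family (constant $\hat g$ on $D^{m+1}_-$, $\Omega$ on $D^{m+1}_+$) along the equator, whereas the paper's mapping-cylinder picture makes the fiber family come from a single homotopy. Both versions handle this with a $C^2$-small smoothing of the fiber family, which you correctly flag as the main technical step; the paper does it explicitly, and it also notes that the second fundamental form of $\partial(W\times\cdot)$ has vanishing horizontal-horizontal entries (skew-symmetry of the $A$-tensor) so that $H$ of the total space is exactly proportional to the fiberwise $H$ with no lower-order corrections, slightly cleaner than your ``$O(R^{-1})$'' remark but not materially different. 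The rescaling conventions (your $R^2 g_{S^{m+1}}\oplus g_{\text{fiber}}$ versus the paper's $g_{\text{base}}\oplus\lambda\,g_{\text{fiber}}$) are related by a global conformal factor and are equivalent. The final reduction from the subspace $\RR_{>\sigma}^{\{\scal_{g_0}>\sigma_0\};H>h_0}(M)$ is exactly as in the paper.
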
 

\begin{proof} 
It is enough to show that $\omega$ represents a nonzero class in $\pi_{m}( \RR_{> 0}^{H > 0}(M) , \hat g)$.
Arguing by contradiction, we assume that there is a continuous homotopy $h \colon  S^m \times [0,1]  \to \RR_{>0}^{H > 0}(M)$ from $\omega$ to the constant map $\xi \mapsto g_N + \eps_0 \cdot g_F$.

Consider the  smooth fiber bundle 
\[
       F \to N \stackrel{\pi}{\to}  S^m \times S^1 , 
 \]
where $N = (S^m \times [0,1] \times F) / \big( (\xi, 0, f)  \sim (\xi, 1,\phi(\xi)(f)   \big)$ is the parametrized mapping cylinder for the family  $(\phi(\xi))_{\xi \in S^m}$, and the bundle projection is given by $[\xi,s,  f ] \mapsto (\xi,[s]) $. 
Then $N$ is a closed connected spin manifold with $\Ahat[N] = \Ahat[P] \neq 0$ because $P$ and $N$ are spin bordant (compare the proof of \cite{HSS14}*{Cor.~1.6}). 

The manifold $W \times N$ is the total space of the  smooth fiber bundle 
\[
      M \to W \times N \stackrel{\pi' }{\longrightarrow } S^m \times S^1
\]
where $\pi' (w,x) = \pi(x)$. 
In the following we denote by $M_b = (\pi')^{-1}(b) \subset W \times N$ the fiber over $b \in S^m \times S^1$. 
Since for $\xi \in S^m$ the map $\id \times \phi(\xi) \colon  ( W \times F ,  \omega(\xi)) \to (W \times F,  \hat g)$ is an isometry, the homotopy $h$ induces a continuous family $(g_b)_{b \in S^m \times S^1}$ of metrics $g_b \in \RR_{> 0}^{H >  0} (M_b)$. 
Since the conditions $\scal>0$ and $H>0$ on the metrics of the fibers $M_b$ are $C^2$-open in the space of smooth Riemannian metrics on the total space $W\times N$, we can, after a $C^2$-small pertubation, assume in addition that $(g_b)_{b \in S^m \times S^1}$ depends smoothly on $b$.

Choose a horizontal distribution on the tangent bundle $T( W \times N)$ with respect to the bundle projection $\pi' \colon  W \times N \to S^m \times S^1$ in such a way that the distribution is tangential to $\partial(W\times N)$ along the boundary.
For  $0 < \lambda \leq 1$ let $g(\lambda)$ be the metric on the total space $W \times N$ determined by this horizontal distribution, the standard metric on $S^m \times S^1$ and the smooth family $(\lambda \cdot g_b)_{b \in S^m \times S^1}$ on the fibers. 
For each $\lambda$ the fibration $\pi'$ is a Riemannian submersion.

By the O'Neill formulas, the scalar curvature of $g(\lambda)$ is given by
$$
\scal_{g(\lambda)} 
=
\scal_{S^m\times S^1}\circ\pi' + \lambda^{-1}\scal_{g_{\pi'(\cdot)}} - \lambda |A|^2 - |T|^2 - |\mathscr{H}|^2 - 2\div_\mathsf{hor}(\mathscr{H}) ,
$$
see \cite{Besse}*{Ch.~9}.
Here $A$ is the curvature of the horizontal distribution, $T$ the second fundamental form of the fibers, $\mathscr{H}$ the unnormalized mean curvature vector field of the fibers, and $\div_\mathsf{nor}$ the horizontal divergence.
Since $\scal_{g_b}>0$, the scalar curvature of $g(\lambda)$ is positive for $\lambda>0$ small enough.

By the choice of horizontal distribution, the boundaries of $W\times N$ and of the fibers $M_b$ have the same interior unit normal vector field $\nu$.
Since $\nu$ is vertical, the second fundamental forms $\II^{\partial(W\times N)}_{g(\lambda)}$ and $\II^{\dM _b}_{g_b}$ satisfy
$$
\II^{\partial(W\times N)}_{g(\lambda)}(U,V)
=
g(\lambda)(\nabla^{g(\lambda)}_UV,\nu)
=
\lambda g_b(\nabla^{\lambda g_b}_UV,\nu)
=
\II^{\dM _b}_{\lambda g_b}(U,V)
=
\lambda^{-1}\II^{\dM _b}_{g_b}(U,V)
$$
for all $U,V$ tangent to $\dM _b$ and 
$$
\II^{\partial(W\times N)}_{g(\lambda)}(X,Y)
=
g(\lambda)(\nabla^{g(\lambda)}_XY,\nu)
=
g_b(A_XY,\nu)
$$
for horizontal $X$ and $Y$.
Since the second fundamental form is symmetric while $A$ is skew-symmetric in $X$ and $Y$, we have $\II^{\partial(W\times N)}_{g(\lambda)}(X,Y)=0$.
Taking traces, we conclude $H_{g(\lambda)} = \lambda^{-1}\frac{\dim M -1}{\dim W + \dim N -1}H_{g_b}>0$ for all $\lambda>0$.

Since $W$ is a compact connected spin manifold which is of infinite $K$-area and $N$ is a closed connected spin manifold with $\Ahat[N] \neq 0$ the existence of a positive scalar curvature metric with strictly mean convex boundary on $W \times N$ is in contradiction to Theorem~\ref{thm:nonexist}.
\end{proof} 

Combined with Theorems~\ref{main} and \ref{mainwithboundary} we deduce:

\begin{corollary} \label{differentboundary} 
For each $m \geq 0$ there exists a compact spin manifold $M$ with nonempty boundary such that for every nonnegative continuous function $\sigma \colon  M \to \R$ and every nonnegative smooth function $h_0 \colon  \partial W \to \R$ the $m$-th homotopy of every space in the diagram 
$$
\xymatrixrowsep{1pc}
\xymatrixcolsep{1pc}
\xymatrix{
       & &  \RR_{> \sigma}^{H=h_0}(M)  \ar@{^{(}->}[dr] & \\
    ^\nor\RR_{> \sigma}^{\II = h_0}(M)  \ar@{^{(}->}[r] &
    \RR_{> \sigma}^{\II = h_0}(M) \ar@{^{(}->}[ur] \ar@{^{(}->}[dr]&  
   &
    \RR_{>\sigma}^{H \geq h_0}(M) \\
    &\RR_{>\sigma}^{\II > h_0}(M) \ar@{^{(}->}[r] & \RR_{>\sigma}^{\II \ge h_0}(M) \ar@{^{(}->}[ur] & \RR_{>\sigma}^{H>h_0}(M) \ar@{^{(}->}[u] 
}
$$
contains nontrivial classes.
The same holds  for all of  these spaces under the additional boundary condition $\X = \{\scal_{g_0} > \sigma_0 \}$ where $\sigma_0 \colon  \dM  \to \R$ is an arbitrary continuous map. 
\hfill\qed
\end{corollary}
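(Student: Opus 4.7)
The plan is to combine the nontrivial homotopy class produced by Theorem~\ref{thm:homotopy} with the weak homotopy equivalences of Theorems~\ref{main}, \ref{main_suppl} and \ref{mainwithboundary}. I will take $M = W\times F$ as in \eqref{eq:DefM}, with $W$ a compact connected spin manifold of infinite $K$-area (for instance the $3$-torus with a small open solid torus removed, as in Example~\ref{ex:threetorus}), and with the fiber bundle $F \to P \to S^{m+1}$ chosen as in the paragraph preceding Theorem~\ref{thm:homotopy}. For any nonnegative continuous $\sigma\colon M\to\R$ and nonnegative smooth $h_0\colon\dM\to\R$, Theorem~\ref{thm:homotopy} then supplies a base point $\hat g$ and a map $\omega$ representing a nontrivial class $[\omega] \in \pi_m\bigl(\RR_{>\sigma}^{H>h_0}(M),\hat g\bigr)$, as well as a nontrivial class in $\pi_m\bigl(\RR_{>\sigma}^{\{\scal_{g_0}>\sigma_0\};H>h_0}(M),\hat g\bigr)$ for any continuous $\sigma_0\colon \dM\to\R$, the latter being ensured by taking $\eps_0$ in \eqref{eq:DefOmega} small enough that $\scal_{(\hat g)_0} > \sigma_0$ on the boundary.

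Next, I would observe that any two spaces appearing in the diagram of Corollary~\ref{differentboundary} are linked by a zig-zag of inclusions, each of which is a weak homotopy equivalence by the results already at our disposal: Theorem~\ref{main} handles the inclusions among $^\nor\RR_{>\sigma}^{\II=h_0}(M)$, $\RR_{>\sigma}^{\II=h_0}(M)$, $\RR_{>\sigma}^{H=h_0}(M)$, $\RR_{>\sigma}^{\II\ge h_0}(M)$ and $\RR_{>\sigma}^{H\ge h_0}(M)$, while Theorem~\ref{main_suppl} takes care of the strict-inequality inclusions $\RR_{>\sigma}^{H>h_0}(M)\hookrightarrow\RR_{>\sigma}^{H\ge h_0}(M)$ and $\RR_{>\sigma}^{\II>h_0}(M)\hookrightarrow\RR_{>\sigma}^{\II\ge h_0}(M)$. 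Because a weak homotopy equivalence induces an isomorphism on $\pi_m$ at every choice of base point, propagating $[\omega]$ along this zig-zag produces a nontrivial element of $\pi_m$ of each space in the diagram, based at the image of $\hat g$ under the relevant weak equivalence (which exists since weak equivalences preserve path components).

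Finally, for the refined statement under the additional intrinsic condition $\X = \{\scal_{g_0}>\sigma_0\}$, I would rerun the same argument with Theorem~\ref{mainwithboundary} replacing Theorem~\ref{main}: the same zig-zag of inclusions restricted to $\RR^{\X}$ consists of weak homotopy equivalences, and the starting class is the nontrivial element of $\pi_m\bigl(\RR_{>\sigma}^{\{\scal_{g_0}>\sigma_0\};H>h_0}(M),\hat g\bigr)$ from Theorem~\ref{thm:homotopy}. I do not foresee a serious obstacle here: the whole corollary is essentially a formal consequence of results already proved, the only mild point being the careful tracking of base points across the zig-zag of weak homotopy equivalences and the choice of $\eps_0$ ensuring $\hat g\in\RR^{\X}$.
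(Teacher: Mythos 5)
Your proposal is correct and matches the paper's intended argument: Theorem~\ref{thm:homotopy} supplies the nontrivial class in $\pi_m$ of $\RR_{>\sigma}^{H>h_0}(M)$ (respectively of $\RR_{>\sigma}^{\{\scal_{g_0}>\sigma_0\};H>h_0}(M)$), and the weak homotopy equivalences from Theorems~\ref{main}, \ref{main_suppl} and \ref{mainwithboundary} (the last one with $\X=\RR(\dM)$ already subsuming the first two) propagate that class through the zig-zag to every space in the diagram, with the usual bookkeeping of base points via $\pi_0$-surjectivity. The only cosmetic point is that the choice of $\eps_0$ making $\hat g\in\RR^{\{\scal_{g_0}>\sigma_0\}}$ is already built into the construction preceding Theorem~\ref{thm:homotopy}, so there is nothing additional to arrange there.
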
 

\begin{remark} 
Using the methods in \cite{HSS14}, one can show that for $m \geq 1$ the homotopy classes in Corollary~\ref{differentboundary} are of infinite order and for $m=0$ the spaces have infinitely many path-components. 
\end{remark}


\begin{bibdiv}
\begin{biblist}

\bib{Almeida1985}{article}{
  author={Almeida, S.}, 
  title={Minimal hypersurfaces of a positive scalar curvature manifold}, 
  journal={Math. Z.}, 
  volume={190}, 
  date={1985}, 
  pages={73-82}, 
}

\bib{A1989}{book}{
   author={Atiyah, M.F.},
   title={$K$-theory},
   series={Advanced Book Classics},
   edition={2},
   publisher={Addison-Wesley Publishing Company, Advanced Book Program, Redwood City, CA},
   date={1989},
   pages={xx+216},
   isbn={0-201-09394-4},
}

\bib{APS1}{article}{
   author={Atiyah, M.F.},
   author={Patodi, V.K.},
   author={Singer, I.M.},
   title={Spectral asymmetry and Riemannian geometry. I},
   journal={Math. Proc. Cambridge Philos. Soc.},
   volume={77},
   date={1975},
   pages={43--69},
   issn={0305-0041},
}

\bib{B96}{article}{
   author={B\"{a}r, C.},
   title={Metrics with harmonic spinors},
   journal={Geom. Funct. Anal.},
   volume={6},
   date={1996},
   number={6},
   pages={899--942},
   issn={1016-443X},
}

\bib{BB}{incollection}{
   author={B\"{a}r, C.},
   author={Ballmann, W.},
   title={Boundary value problems for elliptic differential operators of first order},
   booktitle={Surveys in differential geometry},
   volume={12},
   publisher={Int. Press, Boston, MA},
   date={2012},
   pages={1--78},
}

\bib{BGM}{article}{
   author={B\"{a}r, C.},
   author={Gauduchon, P.},
   author={Moroianu, A.},
   title={Generalized cylinders in semi-Riemannian and Spin geometry},
   journal={Math. Z.},
   volume={249},
   date={2005},
   number={3},
   pages={545--580},
   issn={0025-5874},
}

\bib{BH}{article}{
  title={Local flexibility for open partial differential relations},
  author={B\"{a}r, C.},
  author={Hanke, B.},
  journal={Comm.\ Pure Appl.\ Math.},
  volume={75}, 
  pages={1377--1415},
  date={2022},
}

\bib{Besse}{book}{
   author={Besse, A.~L.},
   title={Einstein manifolds},
   series={Ergebnisse der Mathematik und ihrer Grenzgebiete (3)},
   volume={10},
   publisher={Springer-Verlag, Berlin},
   date={1987},
   pages={xii+510},
   isbn={3-540-15279-2},
}

\bib{BMN}{article}{
   TITLE = {Deformations of the hemisphere that increase scalar curvature},
 AUTHOR = {Brendle, S.}, 
 AUTHOR = {Marques, F.}, 
 AUTHOR= {Neves, A.},
    JOURNAL = {Invent. Math.},
      VOLUME = {185},
      YEAR = {2011},
    NUMBER = {1},
     PAGES = {175--197}
  }

\bib{CarlottoLi}{article}{ 
 author={Carlotto, A.}, 
 author={Li, Ch.}, 
 title={Constrained deformations of positive scalar curvature metrics}, 
 journal = {J. Differential Geom.},
 volume={126}, 
 number={2}, 
 pages={475--554}, 
 year={2024},
} 
 
\bib{Chow2020}{article}{ 
 author={Chow, T.-K. A.}, 
 title={Positivity of Curvature on Manifolds with Boundary}, 
 journal={Int. Math. Res. Notices}, 
 volume={2022}, 
 year={2022},  
 pages={11401--11426},
} 

 \bib{Fukumoto2015}{article}{ 
 AUTHOR = {Fukumoto, Y.},
     TITLE = {Invariance of finiteness of {K}-area under surgery},
   JOURNAL = {Geom. Dedicata},
    VOLUME = {176},
      YEAR = {2015},
     PAGES = {175--183}
 }

\bib{GS2002}{article}{ 
   AUTHOR = {Goette, S.}, 
   AUTHOR =  {Semmelmann, U.},
     TITLE = {Scalar curvature estimates for compact symmetric spaces},
   JOURNAL = {Differential Geom. Appl.},
    VOLUME = {16},
      YEAR = {2002},
    NUMBER = {1},
     PAGES = {65--78},
    }

\bib{G1996}{incollection}{
   author={Gromov, M.},
   title={Positive curvature, macroscopic dimension, spectral gaps and higher signatures},
   booktitle={Functional analysis on the eve of the 21st century, Vol.~II},
   address={New Brunswick, NJ},
   series={Progr. Math.},
   volume={132},
   publisher={Birkh\"{a}user, Boston, MA},
   date={1996},
   pages={1--213},
}

\bib{GromovDiracPlateau}{article}{ 
   author={Gromov, M.}, 
     TITLE = {Dirac and {P}lateau billiards in domains with corners},
   JOURNAL = {Cent. Eur. J. Math.},
    VOLUME = {12},
      YEAR = {2014},
    NUMBER = {8},
     PAGES = {1109--1156},
 }
   
 \bib{Gromov2018}{article}{
     AUTHOR = {Gromov, M.},
      TITLE = {Metric inequalities with scalar curvature},
    JOURNAL = {Geom. Funct. Anal.},
     VOLUME = {28},
       YEAR = {2018},
     NUMBER = {3},
      PAGES = {645--726},
       ISSN = {1016-443X},
      DOI = {10.1007/s00039-018-0453-z},
        URL = {https://doi.org/10.1007/s00039-018-0453-z},
 }

\bib{Gromov2019}{arxiv}{
    author={Gromov, M.}, 
    title={Scalar Curvature of Manifolds with Boundaries: Natural Questions and Artificial Constructions}, 
    url={\url{https://arxiv.org/pdf/1811.04311v2.pdf}},
    year={2019},
}

\bib{GromovMean}{article}{
  author={Gromov, M.}, 
  title={Mean curvature in the light of scalar curvature}, 
  journal= {Ann. Inst. Fourier (Grenoble)},
  volume = {69},
  date = {2019},
   number = {7},
   pages= {3169--3194},
} 

\bib{Gromov2020}{incollection}{
   author={Gromov, M.},
   title={Four lectures on scalar curvature},
   booktitle={Perspectives in Scalar Curvature, Vol.~I},
   publisher={World Scientific},
   date={2023},
   pages={1--514},
}

\bib{GL1980}{article}{
   author={Gromov, M.},
   author={Lawson, H.B.},
   title={Spin and scalar curvature in the presence of a fundamental group.~I},
   journal={Ann. of Math.~(2)},
   volume={111},
   date={1980},
   number={2},
   pages={209--230},
   issn={0003-486X},
}

\bib{GL}{article}{
   author={Gromov, M.},
   author={Lawson, H.B.},
   title={Positive scalar curvature and the Dirac operator on complete Riemannian manifolds},
   journal={Inst. Hautes \'{E}tudes Sci. Publ. Math.},
   number={58},
   date={1983},
   pages={83--196 (1984)},
   issn={0073-8301},
}

\bib{Hanke2012}{incollection}{ 
   AUTHOR={Hanke, B.}, 
     TITLE = {Positive scalar curvature, {K}-area and essentialness},
    editor = {B\"{a}r, C.},
    editor = {Lohkamp, J.},
    editor = {Schwarz, M.}, 
 BOOKTITLE = {Global differential geometry},
    SERIES = {Springer Proc. Math.},
    VOLUME = {17},
     PAGES = {275--302},
 PUBLISHER = {Springer, Heidelberg},
      YEAR = {2012}
}

\bib{HSS14}{article}{
     author={Hanke, B.},
     author={Schick, T.},
    author={Steimle, W.},
     title={The space of metrics of positive scalar curvature},
   journal={Inst. Hautes \'{E}tudes Sci. Publ. Math.},
    volume={120},
    year={2014},
    pages={335--367}
}
  
\bib{Hatcher}{book}{ 
    AUTHOR = {Hatcher, A.},
     TITLE = {Algebraic topology},
 PUBLISHER = {Cambridge University Press, Cambridge},
      YEAR = {2002},
     PAGES = {xii+544}
}

\bib{Hunger2019}{article}{ 
AUTHOR = {Hunger, B.},
     TITLE = {Almost flat bundles and homological invariance of infinite {K}-area},
   JOURNAL = {New York J. Math.},
     VOLUME = {25},
      YEAR = {2019},
     PAGES = {687--722}
}

\bib{KKRW}{article}{
   author={Krannich, M.},
   author={Kupers, A.},
   author={Randal-Williams, O.},
   title={An $\mathbb{H}P^2$-bundle over $S^4$ with nontrivial $\hat A$-genus},
   language={English, with English and French summaries},
   journal={C. R. Math. Acad. Sci. Paris},
   volume={359},
   date={2021},
   pages={149--154},
   issn={1631-073X},
}

\bib{LM}{book}{
   author={Lawson, H.B.},
   author={Michelsohn, M.-L.},
   title={Spin geometry},
   series={Princeton Mathematical Series},
   volume={38},
   publisher={Princeton University Press, Princeton, NJ},
   date={1989},
   pages={xii+427},
   isbn={0-691-08542-0},
}

\bib{Li}{article}{ 
AUTHOR = {Li, Ch.},
     TITLE = {A polyhedron comparison theorem for 3-manifolds with positive
              scalar curvature},
   JOURNAL = {Invent. Math.},
     VOLUME = {219},
      YEAR = {2020},
    NUMBER = {1},
     PAGES = {1--37},
     note={correction: \emph{Invent. Math.} \textbf{228} (2022), 535--538}, 
  }

\bib{Llarull}{article}{ 
   AUTHOR = {Llarull, Marcelo},
     TITLE = {Sharp estimates and the {D}irac operator},
   JOURNAL = {Math. Ann.},
    VOLUME = {310},
      YEAR = {1998},
    NUMBER = {1},
     PAGES = {55--71},
     }

\bib{Lott2020}{article}{
   author={Lott, J.},
   title={Index theory for scalar curvature on manifolds with boundary},
   journal={Proc. Amer. Math. Soc.},
   volume={149},
   date={2021},
   number={10},
   pages={4451--4459},
   issn={0002-9939},
}

\bib{Miao}{article}{ 
    AUTHOR = {Miao, P.},
     TITLE = {Positive mass theorem on manifolds admitting corners along a hypersurface},
   JOURNAL = {Adv. Theor. Math. Phys.},
    VOLUME = {6},
      YEAR = {2002},
    NUMBER = {6},
     PAGES = {1163--1182}
  }

\bib{M1}{article}{
   author={Min-Oo, M.},
   title={Scalar curvature rigidity of the hemisphere},
   journal={Preprint},
   date={1995},
}

\bib{M2}{incollection}{
   author={Min-Oo, M.},
   title={Scalar curvature rigidity of certain symmetric spaces},
   booktitle={Geometry, topology, and dynamics},
   series={CRM Proc. Lecture Notes},
   volume={15},
   publisher={Amer. Math. Soc., Providence, RI},
   date={1998},
   pages={127--136},
}

\end{biblist}
\end{bibdiv}

\end{document}